\renewcommand{\RED}[1]{{#1}}
\title{A Model-Consistent Data-Driven Computational Strategy for PDE Joint Inversion Problems}
\author{
Kui Ren\thanks{
		Department of Applied Physics and Applied Mathematics, Columbia University, New York, NY 10027;
		\href{mailto:kr2002@columbia.edu}{kr2002@columbia.edu}
	}
\and
Lu Zhang\thanks{
		Department of Computational Applied Mathematics and Operations Research, and Ken Kennedy Institute, Rice University, Houston, TX 77005;
		\href{mailto:lz82@rice.edu}{lz82@rice.edu}
	}
}
\date{}
\begin{document}

\maketitle


\begin{abstract}

The task of simultaneously reconstructing multiple physical coefficients in partial differential equations (PDEs) from observed data is ubiquitous in applications. In this work, we propose an integrated data-driven and model-based iterative reconstruction framework for such joint inversion problems where additional data on the unknown coefficients are supplemented for better reconstructions. Our method couples the supplementary data with the PDE model to make the data-driven modeling process consistent with the model-based reconstruction procedure. \RED{This coupling strategy allows us to characterize the impact of learning uncertainty on the joint inversion results for two typical inverse problems.} Numerical evidence is provided to demonstrate the feasibility of using data-driven models to improve the joint inversion of multiple coefficients in PDEs.

\end{abstract}
  
\begin{keywords}
	Computational inverse problems, joint inversion, data-driven models, machine learning, iterative reconstruction, PDE-constrained optimization
\end{keywords}

\begin{AMS}
	35R30, 49N45, 65M32, 65N21, 78A46, 86A22
\end{AMS}


\section{Introduction}
\label{SEC:Intro}

Let $X$ and $Y$ be two spaces of functions and $\cA_h: X\times X\mapsto Y$ an operator between them, depending on the parameter $h\in H$ ($H$ being some given space of parameters). We consider the following abstract inverse problem of reconstructing $f\in X$ and $g\in X$ from measured data encoded in $u_h\in Y$:
\begin{equation}\label{EQ:Model Gen}
	\cA_h(f, g)=u_h\,.
\end{equation}
We will make the forward operator $\cA_h$ more explicit when discussing concrete inverse problems. For the moment, we assume that the operators $\cA_h(f, \cdot)$ and $\cA_h(\cdot, g)$ are both uniquely invertible at given $f$ and $g$ in $X$.

\RED{This type of inverse problem appears in many applications where $f$ and $g$ are physical quantities we are interested in imaging from measured data $u_h$. For instance, in diffuse optical tomography~\cite{Arridge-IP99}, $f$ and $g$ could be, respectively, the optical absorption and scattering coefficients of the underlying medium to be probed. In photoacoustic imaging~\cite{MaAn-IP17,DiReVa-IP15,StUh-IPI13,KiSc-SIAM13,BaRe-IP11,ReGaZh-SIAM13}, $f$ and $g$ could be respectively the ultrasound speed and the optical absorption coefficient of the medium. In geophysical applications such as seismic imaging~\cite{CrStGh-IP18,GaMe-JGR04,HaOl-IP97}, $f$ and $g$ could be, respectively, the bulk modulus and the density field of the Earth. We refer interested readers to ~\cite{ArLeSeKoVa-IEEE21,BeJeYa-AA08,CaDu-SIAM80,CaPeGaSc-GJI22,LoRu-IP93,RoCr-IP12,HaChCh-RS86,ReDeBaMiCoWaNu-IEEE12,Zhdanov-Book23} and references therein for more examples of such multiple-coefficient inverse problems. In Section~\ref{SEC:Examples}, we will present two concrete examples of such inverse problems.}

\RED{In many applications, we face the issue that when $f$ and $g$ are treated as independent variables, the data available in $\{u_h\}_{h\in H}$ are not sufficient to reconstruct $f$ and $g$ simultaneously. This is the situation in diffuse optical tomography where boundary current data are not enough to uniquely determine both the absorption and scattering coefficients of the underlying medium~\cite{Arridge-IP99}. The inversion process could be unstable even when the available data are sufficient to uniquely determine the two coefficients. Therefore, one must introduce additional information to improve joint inversions.}

There are roughly two lines of ideas in the literature for joint inversion. The first type of method introduces additional measurement data to enrich the information content of the data $\{u_h\}_{h\in H}$. The other measurement could come from either the same physics in the original problem~\cite{ReGaZh-SIAM13} or a different but related physical process that is coupled to the original problem~\cite{AbGaHaLi-IP12, DoBrMe-Geophysics22,FeReLiZh-Geophysics17, KnHoKoOtBrSo-IEEE17} (often called model fusion). The second type of method uses \emph{a priori} information one has on the unknown $f$ and $g$ to improve the reconstruction. This includes, for instance, structural methods where one assumes that $f$ and $g$ share the same structural features that one can impose in the reconstruction process through regularization; see~\cite{CrStGh-IP18, GaMe-GRL03} and references therein.

\RED{In some biological applications, $f$ and $g$ are physical coefficients related to each other either because they describe physical properties that are connected or because their values change similarly in response to an environmental change. For instance, in diffuse optical tomography, the optical absorption and scattering coefficients of the tissue to be probed change in correlation when tumorous cells are developed. In such a case, one can potentially utilize the relation between the coefficients to improve their computational reconstruction. In the rest of this work, we consider the situation where we have a large amount of past data on $f$ and $g$ that allows us to learn some simplified relation, denoted by $\cN_\theta$, parameterized by $\theta\in\Theta$, between features of $f$ and $g$. We then use this learned relation to help the simultaneous reconstruction of $f$ and $g$. Let $\cF(f)$ represent the part of the information on $f$ that would like to link with the same information on $g$. Then the process can be written mathematically as
\begin{equation}\label{EQ:Relation Gen}
	\cF(g) = \cN_\theta\big(\cF(f)\big)\,.
\end{equation}
The operator $\cF$ is decided by the given \emph{a priori} knowledge of the physics of the problem. Examples of $\cF$ for practical applications include, for instance, the gradient operator for situations where only the gradient of the $f$ and $g$ are expected to be related~\cite{CrStGh-IP18}, and the truncated Fourier transform for applications where only some (lower) Fourier modes between $f$ and $g$ are expected to be related. In the simplest case where $\cF$ is invertible, $\cF^{-1}\circ \cN_{\theta}\circ \cF$ gives the relation between the original coefficients $f$ and $g$.}

\RED{Using machine learning methods to solve joint inversion problems has been extensively studied in recent years; see, for instance, ~\cite{BaWaWaYuLuStYa-RS24,CoLiRoSaTu-SEG20,DoBrMe-Geophysics22,FaWaZhLiMeZh-IEEE25,HeBaTaTa-AWR20,GuZhWiLiLiElYaXuAb-Geophysics25,HuWeWuSuChHuCh-Geophysics23,HuWuHaZh-IEEE19,HuYuWaWa-BSSA24,JiDoZhZeLi-IEEE23,LiPaZhHeZhReTa-IEEE24,MaDeLiGuZhLi-SG25,MoKiBe-GJI24,ReLiLiLiJi-IEEE24,SoMaSpMa-GJI25,WaJiaLiLu-TIMAGE23,YaLiZhDoZh-IEEE23} and references therein. In most references cited, machine learning methods are used as an inversion method, meaning that the data are already fixed and one is only interested in using machine learning methods to improve or replace a classical model-based inversion scheme. In the current work, we use machine learning as a way to couple two different types of data, that is, the offline historical coefficient data pairs and the online measurement used to infer new coefficient pairs. Through an offline-online coupling scheme, we improve the overall joint reconstruction procedure. Moreover, we will show that our coupling scheme allows us to characterize how the error in the offline learning process is propagated into the online reconstruction of the coefficients with new data.}

The rest of the paper is organized as follows. We first present the general computational strategy in~\Cref{SEC:Num}. We then discuss the strategy and present numerical simulations in two examples of joint inversion problems, one in quantitative photoacoustic imaging and the other in full waveform inversion, in~\Cref{SEC:Examples} and~\Cref{SEC:Simulation}. In~\Cref{SEC:Uncertainty}, we briefly discuss the impact of learning accuracy on the joint inversion results. Concluding remarks are offered in~\Cref{SEC:Concl}.

\section{Computational framework}
\label{SEC:Num}

The joint inversion problem we introduced can be summarized as follows. Given a set of $N$ historical coefficient pairs
\begin{equation}\label{EQ:Data Hist}
	\{f_k, g_k\}_{k=1}^N\,,
\end{equation}
and a set of newly measured data 
\begin{equation}\label{EQ:Data Meas}
	\{u_h\}_{h\in H}
\end{equation}
corresponding to the mathematical model~\eqref{EQ:Model Gen}, we are interested in reconstructing the functions $f$ and $g$ corresponding to the measurements~\eqref{EQ:Data Meas}.

\RED{The inversion process with data~\eqref{EQ:Data Hist} and~\eqref{EQ:Data Meas} is a two-step procedure. In the first step, we learn from data~\eqref{EQ:Data Hist} a relation $\cN_\theta$ between $\cF(f)$ and $\cF(g)$ as given in~\eqref{EQ:Relation Gen}. The choice of the features to be learned, that is, the operator $\cF$, is of critical importance in this step. More discussions will be provided later in this section. In the second step, we use this relation and data~\eqref{EQ:Data Meas} to reconstruct the coefficients $f$ and $g$. Our idea is philosophically similar to the idea of learning a regularization functional from historical data, such as that in~\cite {GiOnWi-IEEE19}, but is fundamentally different in terms of the details of the implementation, including how the learning process is done and how the learned model is utilized in the joint inversion stage.}

\subsection{Model-consistent learning from historical data}

The first stage of the inversion is a modeling process where we use existing historical data~\eqref{EQ:Data Hist} to learn the relation $\cN_\theta$. This process is done independently of the reconstruction process. This is similar to the works where one learns a regularization functional based on the available data~\cite{GiOnWi-IEEE19}. However, given that the relation $\cN_\theta$ we learned will be used in model-based inversion with data~\eqref{EQ:Data Meas} in the next step, it is advantageous to make the modeling process consistent with the inversion procedure. Therefore, we perform the modeling step taking the physical model~\eqref{EQ:Model Gen} as a constraint in addition to existing data~\eqref{EQ:Data Hist}. More precisely, we first pass the data~\eqref{EQ:Data Hist} through the mathematical model~\eqref{EQ:Model Gen} to obtain the new training dataset
\begin{equation}\label{EQ:Data Hist New}
	\{f_k, g_k, \{u_{h,k}\}_{h\in H}\}_{k=1}^N\,,
\end{equation}
where the data point $u_{h,k}$ is obtained by evaluating the forward model~\eqref{EQ:Model Gen} with coefficient pair $(f_k, g_k)$ and parameter $h$, that is, $u_{h,k}=\cA_h(f_k, g_k)$. We then use this new dataset as the additional training dataset to impose consistency between the learned model $\cN_\theta$ and the forward model~\eqref{EQ:Model Gen}. The learning process is realized by solving the following minimization problem:
\begin{equation}\label{EQ:Model-Consistent Training}
    \begin{array}{c}
	\wh \theta =\displaystyle \argmin_{\theta\in\Theta}\cL(\theta),\\[1ex] \cL(\theta):=\dfrac{1}{2N|H|}\dsum_{h\in H}\dsum_{k=1}^N\|\cA_h(f_k, \wt \cN_\theta(f_k))-u_{h,k}\|_Y^2\,,
    \end{array}
\end{equation}
where $\wt \cN_\theta:=\cF^{-1}\circ\cN_\theta\circ \cF$ is the relation between the coefficients $f$ and $g$ in the physical space $X$, and $|H|$ is the cardinality of the set $H$.

To reduce the computational cost (due to the requirement of evaluating $\cA_h(f_k, \wt\cN_\theta(f_k))$), we initialize the model-consistent training from the result of the purely data-driven training. That is, we take the initial value $\wh \theta_0$ for~\eqref{EQ:Model-Consistent Training} as
\begin{equation}\label{EQ:Data-Driven Training}
	\wh \theta_0 =\displaystyle \argmin_{\theta\in\Theta}\wt \cL(\theta),\qquad 
 \wt \cL(\theta):=\dfrac{1}{2N}\sum_{k=1}^N \|\cF(g_k)-\cN_\theta(\cF(f_k))\|_{\wt X}^2\,,
\end{equation}
where $\wt X:=\cF[X]$ \RED{denotes} space of coefficients $g$ in the $\cF$ domain. This model-consistent learning process is summarized in Algorithm~\ref{ALG:Learning}. 

\begin{algorithm}[htb!]
\caption{Model-consistent Learning}
\label{ALG:Learning}
\begin{algorithmic}[1]
\State Learn $\wh \theta_0$ according to~\eqref{EQ:Data-Driven Training} from data $\{f_k, g_k\}_{k=1}^N$
\State Generate model-consistent data $\{f_k, \{u_{h,k}\}_{h\in H}\}_{k=1}^N$ from $\{f_k, g_k\}_{k=1}^N$
\State Learn $\wh \theta$ according to~\eqref{EQ:Model-Consistent Training} using $\wh \theta_0$ as the initial guess 
\end{algorithmic}
\end{algorithm}

\paragraph{Parameterization of unknown coefficients.} Neither the data-driven step~\eqref{EQ:Data-Driven Training} nor the model-consistent step~\eqref{EQ:Model-Consistent Training} of the learning problem is computationally easy as the problems are generally non-convex (since the relation $\cN_\theta$ is most likely nonlinear), and the dimension of the problem can be very high. It is critical to select efficient representations for the functions $f$ and $g$ and the relation between them, that is, the operator $\cN_\theta$. Our objective in this proof-of-concept study is not to develop such representations for a given problem but rather to show that with reasonable parameterization of the objects involved, we can have a feasible learning procedure that can be useful for joint reconstructions in the next stage of the computational procedure.

We consider in the rest of this work parameterization of the functions $f$ and $g$ by their Fourier coefficients. More precisely, we take $\cF(f)$ to be the $K$-term truncated generalized Fourier series of $f$ in the sense that
\begin{equation}\label{EQ:Fourier Model}
    \cF(f)=\sum_{k=0}^K \wh f_k \varphi_k(\bx)
\end{equation}
where $\varphi_k(\bx)$ is the eigenfunctions of the Laplace operator with Neumann boundary condition, that is,
\begin{equation}\label{EQ:Eigenfunction}
	-\Delta \varphi_k=\lambda_k \varphi_k(\bx),\ \ \mbox{in}\ \ \Omega, \qquad \bn\cdot\nabla \varphi_k(\bx) = 0, \ \ \mbox{on}\ \partial\Omega.
\end{equation}
\RED{When $\Omega$ is smooth, $\{\varphi_{k}\}_{k\ge 0}$ forms an orthonormal basis in $L^2(\Omega)$; see for instance~\cite{Jost-Book12}. We will thus identify any function $f$ with its corresponding truncated generalized Fourier coefficient vector $\wh \bff=(\wh f_0, \cdots, \wh f_K)$ and attempt to learn a nonlinear relation between the generalized Fourier coefficients of $f$ and $g$:
\[
    \cN_\theta:\begin{array}{lcl}
    \wh \bff &\longmapsto& \wh \bg\\[1ex]
    \bbR^K &\longmapsto& \bbR^K\,,
    \end{array}
\]
where $K$ is relatively small (as we are not attempting to learn precise relations but only reasonably good approximations to help the joint reconstructions in the next step of this data-driven joint inversion procedure), to be specified later in the numerical experiments. Obviously, one does not want to take a $K$ that is too large, as that would make the dimension of the learning problem too big to be practical.} 

\paragraph{Parameterization of the unknown map.} We implement two different representations of the map $\cN_\theta$. The first representation of $\cN_\theta$ is through multi-dimensional polynomials. Let $\balpha=(\alpha_0, \cdots, \alpha_K)$ be a multi-index, $|\balpha|:=\sum_{k=0}^K \alpha_k$, and $P_{\balpha}(\wh \bff)$ be the monomial of order $\balpha$ formed from $\wh \bff$, that is,
\[
    P_{\balpha}(\wh \bff):=\prod_{k=0}^K \wh f_k^{\alpha_k}\,.
\]
Then the specific polynomial model we take can be written in the form:
\begin{equation}\label{EQ:Poly}
    \wh \bg= \cN_\theta(\wh \bff):=\Big(\sum_{|\balpha|\le n}\theta_{1,{\balpha}} P_{\balpha}(\wh\bff), \cdots, \sum_{|\balpha|\le n}\theta_{j,{\balpha}} P_{\balpha}(\wh\bff), \cdots, \sum_{|\balpha|\le n}\theta_{K, {\balpha}} P_{\balpha}(\wh\bff)\Big)
\end{equation}
where $n$ is the order of the polynomial, and $\{\theta_{j,\balpha}\}_{j=0}^K$ is the set of coefficients of the polynomials. In the numerical simulations, we are interested in limiting the size of the parameter space of the problem. To limit the size of parameter space, we will impose sparsity constraints on the coefficient tensor, as the precise relation between the coefficients is not what we are interested in representing. The details are provided in~\Cref{SUBSEC:Polynomial}.

The second representation of $\cN_\theta$ that we implemented, as a feasibility study and also as a benchmark to the first approach, is a fully connected neural network. We adopt a standard autoencoder network architecture. The learning network contains three major substructures: an encoder network $E_\theta$, a decoder network $D_\theta$, and an additional predictor network $P_\theta$. The encoder-decoder substructure is trained to regenerate the input data, while the predictor reads the latent variable to predict the velocity field. In terms of the input-output data, the network is trained such that $\wh\bff= D_\theta(E_\theta(\wh \bff))$ and $\wh \bg= P_\theta(E_\theta(\wh \bff))$. Once the training is performed, the learned model is $\cN_{\wh \theta}: = P_{\wh \theta} \circ E_{\wh \theta}$. We provided more details on the implementation of the network structure as well as the training process in~\Cref{SUBSEC:Network}.

\subsection{Joint reconstruction from new data}\label{sec:joint reconstruction}

We can now perform joint inversion of $f$ and $g$ from given noisy data $\{u_h^\delta\}$, \RED{where} $\delta$ indicates the level of random noise in the data, with the learned relation $\wt\cN_{\wh\theta} = \cF^{-1}\circ\cN_{\wh\theta}\circ \cF$. \RED{We perform the reconstruction with the classical model-based iterative inversion framework, where we pursue the reconstruction by minimizing the mismatch between measured data and the model predictions. Instead of using the learned relation between $f$ and $g$ as a hard constraint in the inversion process, we use it as a guide for the reconstruction. We start with the initial solution where we force the relation $\wt\cN_{\wh \theta}$ exactly in the reconstruction. That is, we reconstruct $(\wh f_0, \wh g_0:=\wt\cN_{\wh \theta}(\wh f_0))$ with $\wh f_0$ given by
\begin{equation}\label{EQ:Min 0}
	\wh f_0 = \argmin_{f\in X}\Phi_0(f):=\dfrac{1}{2|H|}\sum_{h=1}^{|H|} \|\cA_h(f, \wt\cN_{\wh\theta}(f))-u_h^\delta\|_{Y}^2+\dfrac{\beta}{2}\|f\|_{X}^2\,.
\end{equation}
We then reconstruct a sequence of $(\wh f_j, \wh g_j)$ by solving 
\begin{equation}\label{EQ:Min j}
	(\wh f_j, \wh g_j)=\argmin_{(f, g) \in X\times X}\Phi_j(f, g)\,,
\end{equation}
where
\begin{equation}
	\Phi_j(f, g):=\dfrac{1}{2|H|}\sum_{h=1}^{|H|} \|\cA_h(f, g)-u_h^\delta\|_{Y}^2+\dfrac{\eta_j}{2}\|g-\wt\cN_{\wh \theta}(f)\|_{X}^2+\dfrac{\beta}{2}\|(f, g)\|_{X}^2\,.
\end{equation}
We summarize the process in Algorithm~\ref{ALG:JointInv}.}
\begin{algorithm}
\caption{Joint Inversion with Learned Data Model}
\label{ALG:JointInv}
\begin{algorithmic}[1]
\State Evaluate $\wh f_0$ according to~\eqref{EQ:Min 0} and $\wh g_0:=\wt\cN_{\wh \theta}(\wh f_0)$
\State Set $\eta_0$;
\For{$j=1$ to $j=J$}
	
	\State Set $\eta_{j}=\eta_{j-1}/2$;
	
	\State Evaluate $(\wh f_j, \wh g_j)$ according to~\eqref{EQ:Min j} using $(\wh f_{j-1}, \wh g_{j-1})$ as the initial guess
	
\EndFor
\end{algorithmic}
\end{algorithm}

There are two major reasons for us not to strictly enforce the relation $\wt\cN_{\wh\theta}$, that is, not directly taking the solution to~\eqref{EQ:Min 0} as the reconstruction result in the inversion process. First, since the data-driven modeling process is not necessarily accurate, we want to allow the reconstruction process to search around $(\wh f_0, \wt\cN_{\wh \theta}(\wh f_0))$ for coefficients that can potentially match the data better. Second, if Algorithm~\ref{ALG:JointInv} yields a solution that is dramatically different than $(\wh f_0, \wt\cN_{\wh \theta}(\wh f_0))$, it is an indication that the admissible coefficient pairs for the new data do not live in the set $(f, \wt\cN_{\wh\theta}(f))$. We, therefore, have to let the data decide what the solution is, not the prior knowledge we have from past training data.

To solve the minimization problems~\eqref{EQ:Min 0} and~\eqref{EQ:Min j}, we employed a classical BFGS quasi-Newton method. This method has been implemented for computational inversion in similar settings as ours here~\cite{ReBaHi-SIAM06}.

\section{Two model joint inversion problems}
\label{SEC:Examples}

In this section, we introduce two examples of the abstract inverse problem~\eqref{EQ:Model Gen}. We will use the two examples for numerical simulations to demonstrate the performance of the data-driven joint inversion approach we proposed in the previous section.

\subsection{An inverse diffusion problem}
\label{SUBSEC:Diff}

In the first case, we consider a simplified inverse problem in quantitative photoacoustic imaging~\cite{BaRe-IP11}. Let $\Omega\subseteq\bbR^d$ ($d\ge 1$) be a bounded domain representing an optical medium whose absorption and diffusion coefficients are respectively $\sigma(\bx)$ and $\gamma(\bx)$. The transport of near-infrared photons inside the medium can be described by the following diffusion equation
\begin{equation}\label{EQ:Diff}
	\begin{array}{rcll}
	-\nabla\cdot \gamma(\bx) \nabla u(\bx) + \sigma(\bx) u(\bx) &=& 0, & \mbox{in}\ \ \Omega\\[1ex]
	\bn\cdot \gamma\nabla u + \ell u(\bx) & = & S(\bx), & \mbox{on}\ \partial\Omega
	\end{array} 	 
\end{equation}
where $u(\bx)$ is the density of the photons at $\bx$ and $S(\bx)$ is the illuminating source function sending photons into the medium. The vector $\bn(\bx)$ is the unit outward normal vector of $\partial\Omega$ at $\bx$, and the parameter $\ell$ is known to be the inverse (re-scaled) extrapolation length. 

The inverse problem is to reconstruct $\sigma$ and $\gamma$ from measured internal data of the form
\begin{equation}\label{EQ:Diff Data}
	H(\bx)=\sigma(\bx) u(\bx), \quad \bx\in\bar\Omega\,.
\end{equation}
This inverse problem is a simplified model of quantitative photoacoustic tomography, a hybrid acoustic-optical imaging modality where ultrasound measurements are used to get internal data~\eqref{EQ:Diff Data}; see, for instance, ~\cite{BaRe-IP11} and references therein for more recent overviews in this direction. The same diffusion model, but with additional boundary current data, is also very useful in practical applications such as diffuse optical tomography~\cite{Arridge-IP99}. 
It is well-known that there is an equivalence between $\sigma$ and $\gamma$ in the diffusion equation that prevents the unique reconstruction of the pair $(\sigma, \gamma)$ from a given datum $H$~\cite{BaRe-IP11}. It has also been shown~\cite{BaRe-IP11,BaUh-IP10} that $\sigma$ and $\gamma$ can be uniquely reconstructed with two well-selected datasets when $\gamma$ is known on the boundary of the domain. If that is not the case, then uniqueness with two datasets can not hold~\cite{ReGaZh-SIAM13}. In the rest of this work, we consider the case where additional data on the coefficients $\sigma$ and $\gamma$ are available to allow us to learn a relation between $\sigma$ and $\gamma$ that would help the online reconstruction of the coefficients.

\subsection{An inverse wave propagation problem}
\label{SUBSEC:Acous}

The second example of joint inversion problems we consider here is an inverse problem for the acoustic wave equation. Let $\kappa$ and $\rho$ be, respectively, the bulk modulus and the density of the underlying medium $\Omega\subseteq\bbR^d$, and $p$ be the acoustic pressure due to a boundary source $S(t,\bx)$. Then $p$ solves the following equation:
\begin{equation}\label{EQ:Wave}
	\begin{array}{rcll}
		\dfrac{1}{\kappa}\dfrac{\partial^2 p}{\partial t^2} -\nabla\cdot\big(\dfrac{1}{\rho}\nabla p\big) &=& 0, & \mbox{in}\ \bbR_+ \times \Omega\\[2ex]
    p(0, \bx)=\dfrac{\partial p}{\partial t}(0, \bx) & = & 0, & \mbox{in}\ \Omega
	\end{array}
\end{equation}
\RED{This wave propagation model, when supplemented with appropriate boundary conditions in bounded domains, serves as the forward model for inverse problems in seismic imaging~\cite{AkBiGh-SC02,BoDrMaZa-IP18,EnFrYa-CMS16,EpAkGhBi-IP08,LiBeLePeTr-arXiv20,Pratt-Geophysics99,Symes-GP08,TrTaLi-GJI05}, medical ultrasound imaging~\cite{BaTr-JAS20,BeMoKoLa-PMB17,GuCaTaNaWa-DM20,JaLuCo-IP20,LiViPaAn-arXiv21,LuPeTrCo-arXiv21,MaPoLiWaAn-SIAM18,WiMaBoPiKl-SR20}, among many other applications.}

The inverse problem of interests~\cite{CrStGh-IP18,GaMe-JGR04,HaOl-IP97} is to reconstruct the coefficients $\kappa$ and $\rho$ of the underlying medium from measured field $p$ on the boundary of the medium:
\begin{equation}\label{EQ:Wave Data}
    H(t, \bx):=p(t, \bx)_{|(0, T]\times\partial\Omega}
\end{equation}
for a period of time $T$ that is sufficiently long.

There is extensive literature on the mathematical, computational, and practical sides of this inverse problem; see the references cited above. In most cases, the problem is simplified by assuming that $\rho$ is constant so that the velocity field $c$ of the wave, defined through $\kappa=\rho c^2$, is the effective coefficient to be reconstructed. Here, we consider the case that historical data on the pair $(\kappa, \rho)$ are available for us to learn a relation between $\kappa$ and $\rho$ to improve the online reconstruction of the pair of coefficients. This is similar to the past studies on the same problem with prior on the geometrical similarity of the coefficients~\cite{CrStGh-IP18}.

\section{Numerical experiments}
\label{SEC:Simulation}

We now present some numerical simulations to demonstrate the feasibility of the proposed approach. Due to the proof-of-concept nature of this work, we will only work on synthetic data, meaning that the training data, as well as the joint inversion data are constructed through numerical simulations instead of real physical experiments. Our main point is to demonstrate that if there is indeed a physical law $\cN$ between the coefficients, the proposed data-driven joint reconstruction strategy can be a competitive alternative in solving joint inversion problems.
\begin{figure}[!htb]
\centering
\includegraphics[width=0.37\textwidth]{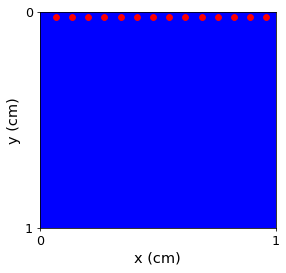}
\hskip 0.1\textwidth
\includegraphics[width=0.5\textwidth]{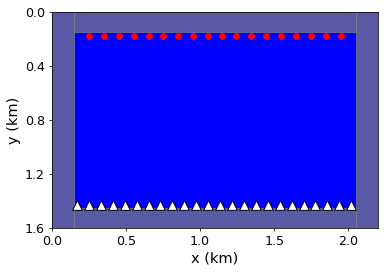}
\caption{The computational domains for the numerical simulations of the inverse diffusion (left) and inverse wave (right) problem. Sources and detectors are illustrated respectively with red dots and white triangles. The gray area surrounding the domain on the right represents a perfectly matched layer (PML).}
\label{FIG:Domain}
\end{figure}
\subsection{The inverse diffusion problem}

We start with the inverse diffusion problem~\eqref{EQ:Diff}--\eqref{EQ:Diff Data} introduced in~\Cref{SUBSEC:Diff}. For simplicity of presentation, we set the extrapolation length $\ell = 1$ as the particular value of the parameter has no impact on the results. We take the computational domain to be $\Omega = (0, 1) \times (0, 1)$ (see the setup in the left plot of Figure~\ref{FIG:Domain}), and discretize the problem with a $P_1$ finite element method. The mesh is constructed such that the vertices are on the Cartesian grids defined by $(x_i,y_j) = (i\Delta x,j\Delta y),i,j = 0,1,\cdots,M$ with $\Delta x = \Delta y = 1/M$.

\subsubsection{Experiment I: Learning with polynomial model}

In the first numerical experiment, we learn the relationship $\cN$ between $\sigma$ and $\gamma$ using the polynomial model~\eqref{EQ:Poly}. We consider the diffusion coefficient $\gamma$ and the absorption coefficient $\sigma$ as Gaussian functions. To be precise, the coefficients are in the form:
\begin{equation}\label{EQ:diffusion_absorption_gaussian}
\gamma({\bf x}) = b_1 + b_2 e^{-\frac{(x - b_4)^2 + (y - b_5)^2}{2b_3^2}},\quad\mbox{and},\quad \sigma({\bf x}) = c_1 + c_2 e^{-\frac{(x - c_4)^2 + (y - c_5)^2}{2c_3^2}},
\end{equation}
where the relation between $\{b_k\}_{k=1}^5$ and $\{c_j\}_{j=1}^5$ is given as
\begin{equation}\label{relation_gaussian_1}
\begin{array}{l}
c_1 = 0.2\dsum_{j = 1}^5 a_{1j}\cos(10 \pi b_j) + 0.1, \qquad 
c_2 = \dsum_{j = 1}^5 a_{2j}\cos(20 \pi b_j) + 1,\\[1ex]
c_3  = 0.1 \dsum_{j = 1}^5 a_{3j}\cos(30 \pi b_j) + 1,\qquad 
c_4 =  \frac{20}{11} \dsum_{j = 1}^5 a_{4j}\cos(2 \pi b_j) + \frac{4}{11} \\[1ex]
c_5 = \frac{25}{3} \dsum_{j = 1}^5 a_{5j}\cos(2 \pi b_j) + \frac{1}{12}\,,
\end{array}
\end{equation}
with $a_{i,j}$ ($1\le i, j\le 5$) \RED{being} the coupling coefficients.

\noindent{\bf Learning data generation.} The synthetic dataset we used for learning consists of $N = 1\times 10^4$ pairs of $(\gamma, \sigma)$ coefficient pair generated randomly according to the models in~\eqref{EQ:diffusion_absorption_gaussian}. We first generate a random collection of $\{\gamma_j\}_{j=1}^N$ by drawing $\{b_i\}_{i=1}^5$ from uniform distributions. We then draw the coupling coefficients $a_{ij}$ from the uniform distribution $\cU[0, 0.1]$. These coupling coefficients are fixed once they are generated. We then form $\sigma_j$ for each $\gamma_j$ using the relation~\eqref{relation_gaussian_1}. 
\begin{figure}[htb!]
	\centering	\includegraphics[width=0.22\textwidth,trim=1.5cm 2cm 0.5cm 1cm,clip]{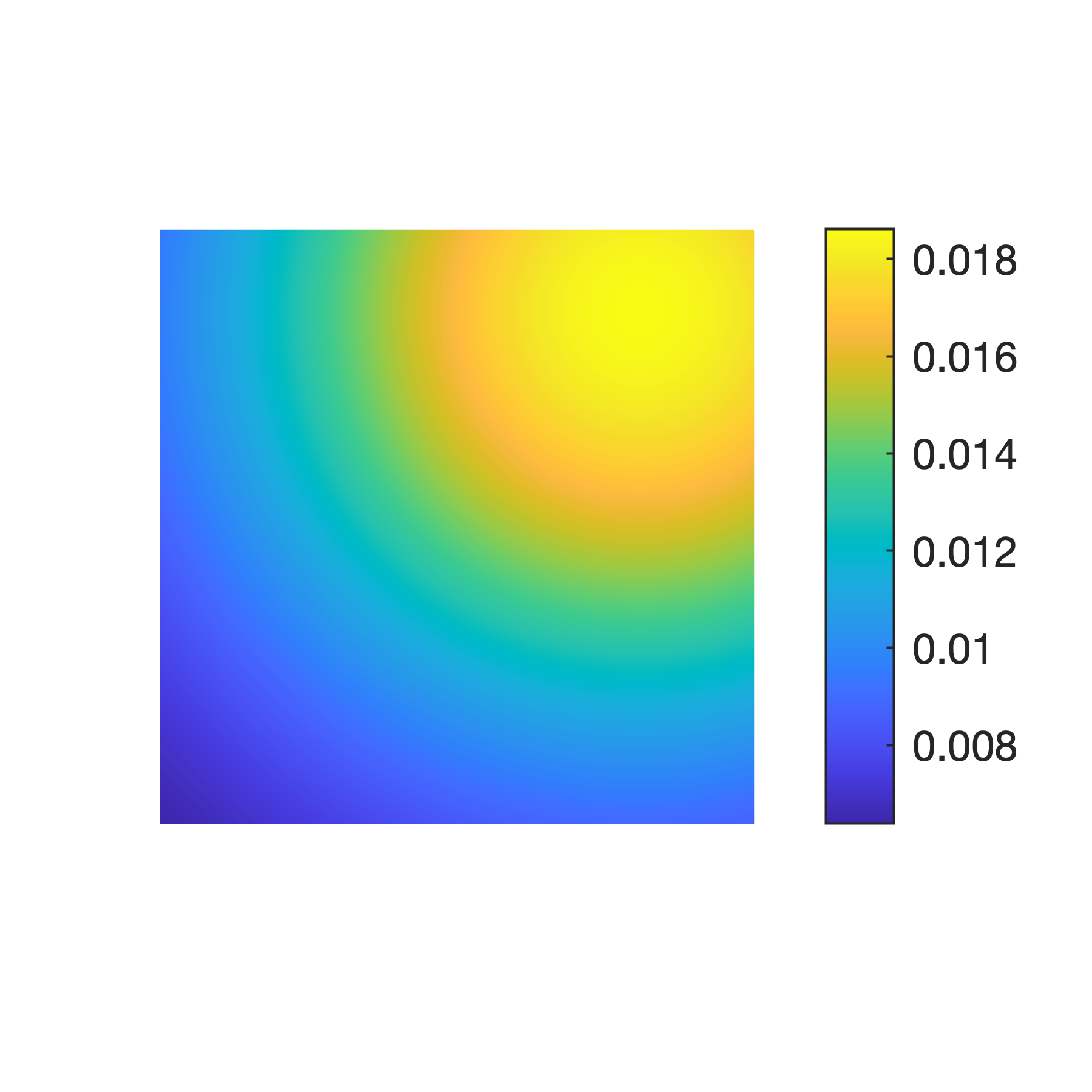}	  \includegraphics[width=0.22\textwidth,trim=1.5cm 2cm 0.5cm 1cm,clip]{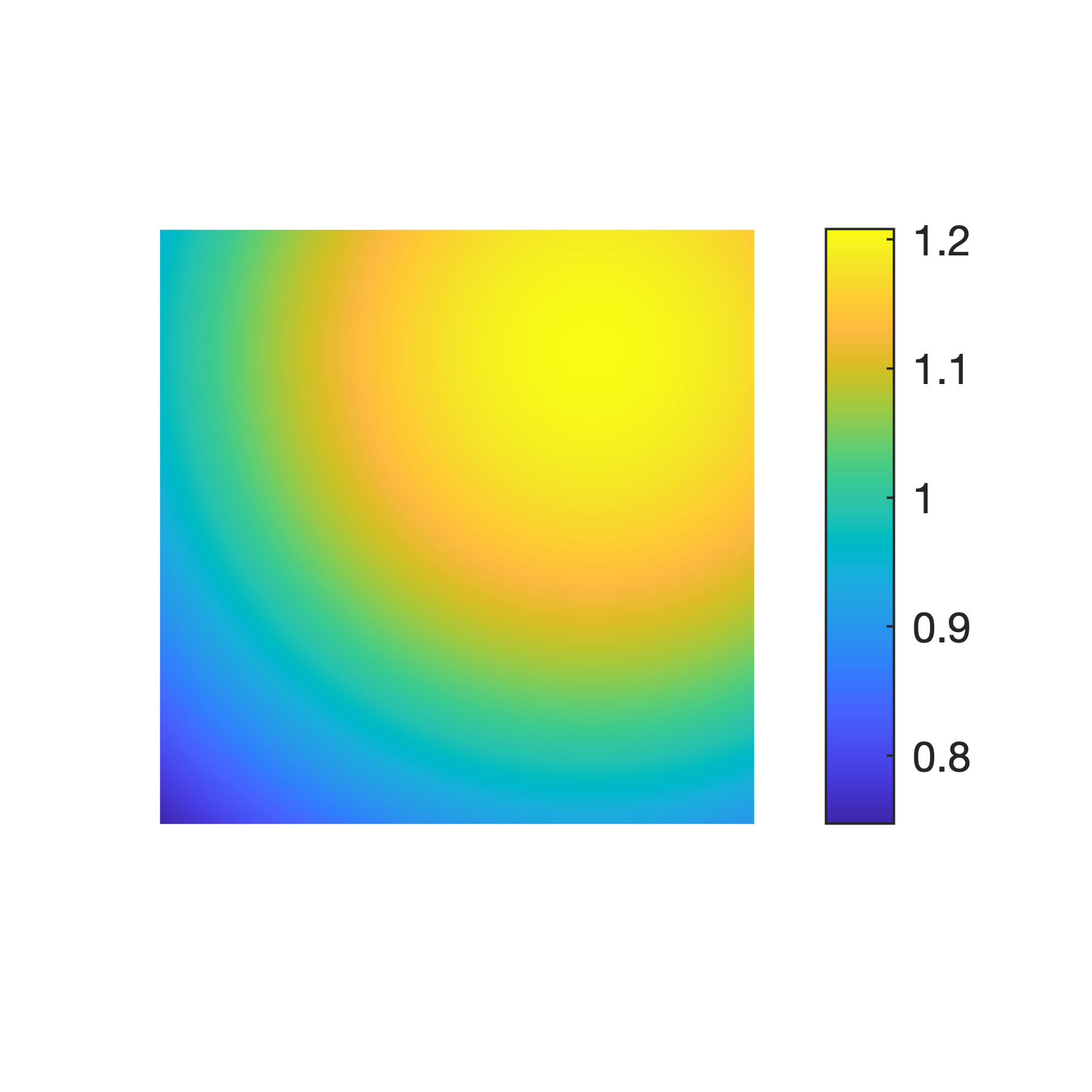}	\includegraphics[width=0.22\textwidth,trim=1.5cm 2cm 0.5cm 1cm,clip]{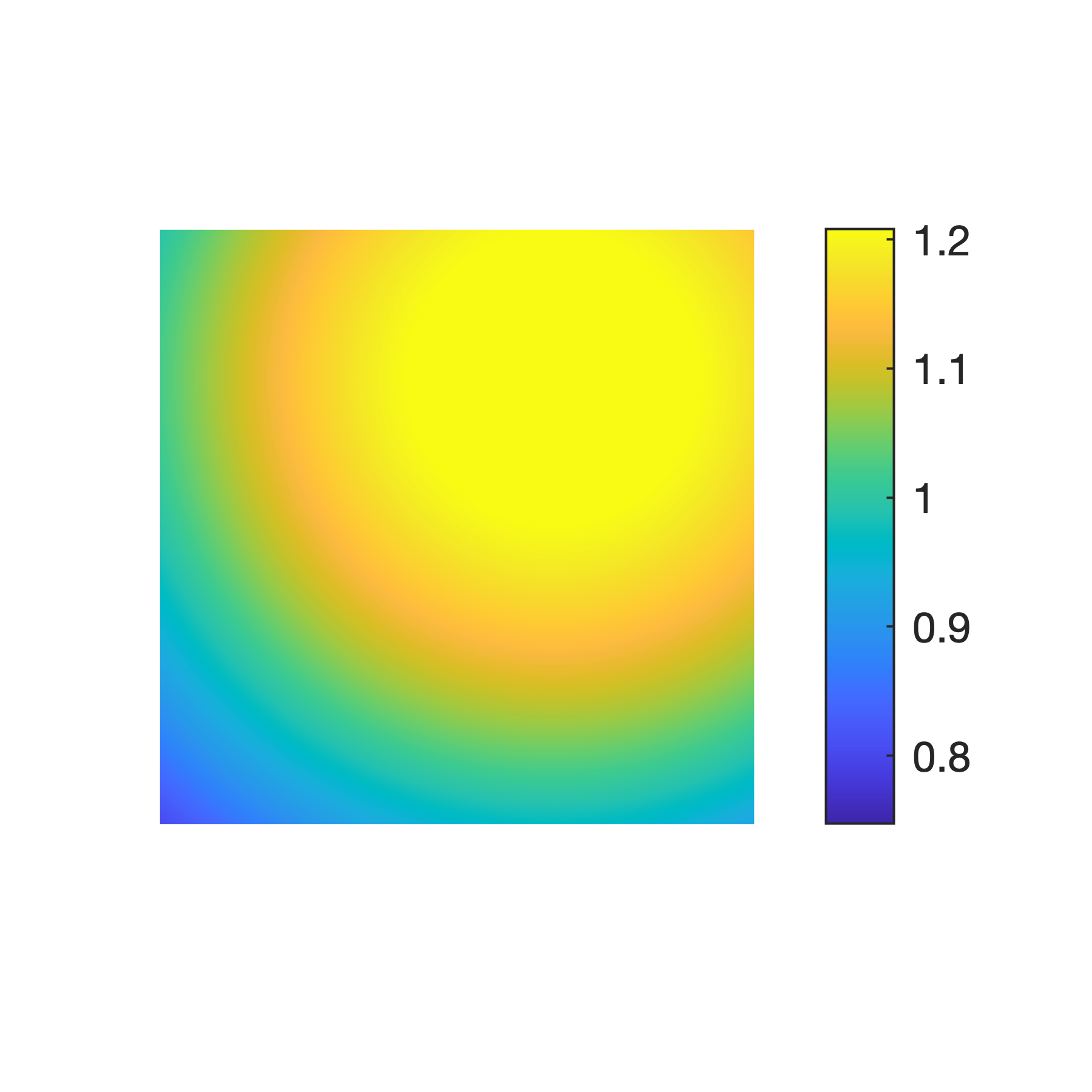}	\includegraphics[width=0.22\textwidth,trim=1.5cm 2cm 0.4cm 1cm,clip]{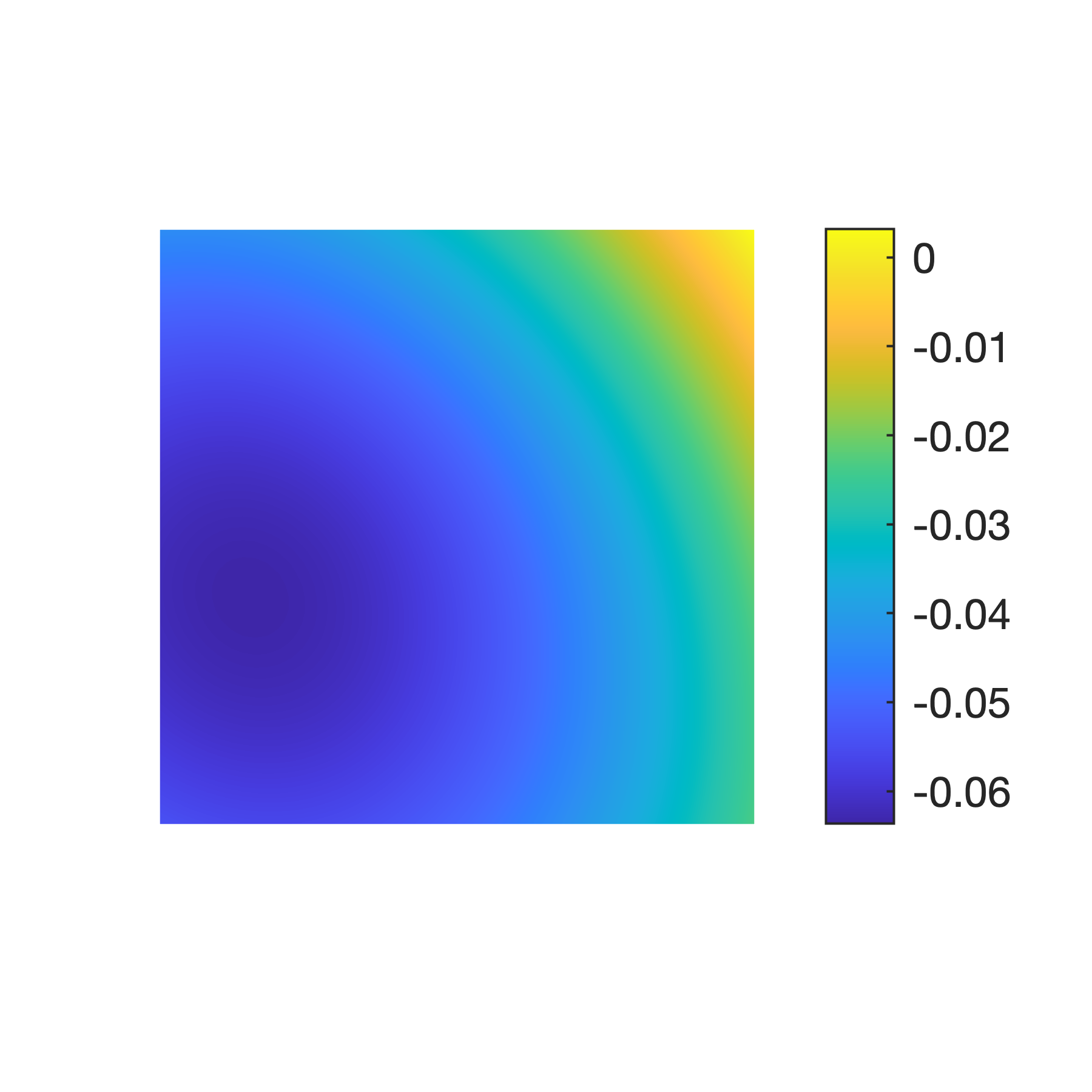}\\	\includegraphics[width=0.22\textwidth,trim=1.5cm 1.5cm 0.5cm 1cm,clip]{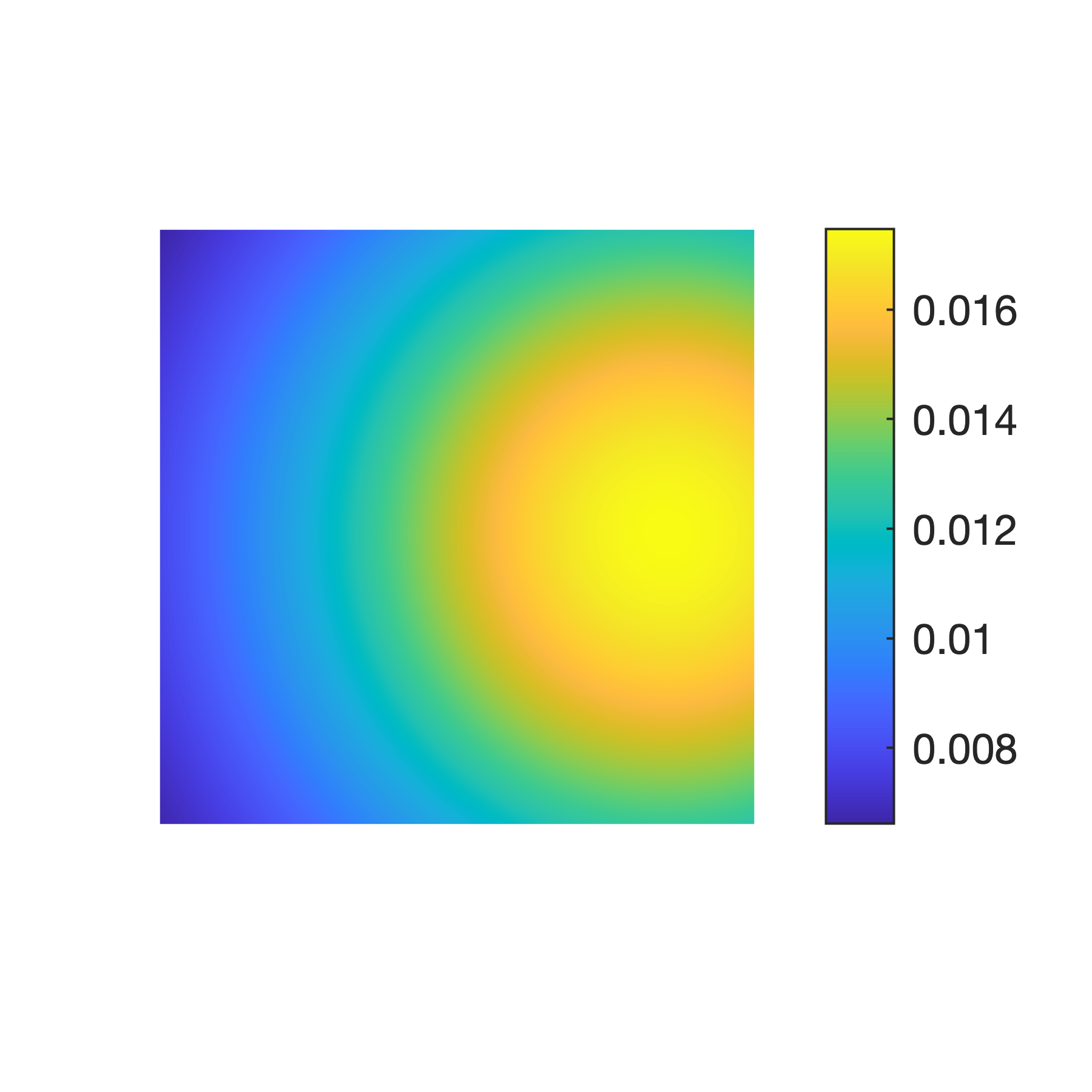}	\includegraphics[width=0.22\textwidth,trim=1.5cm 1.5cm 0.5cm 1cm,clip]{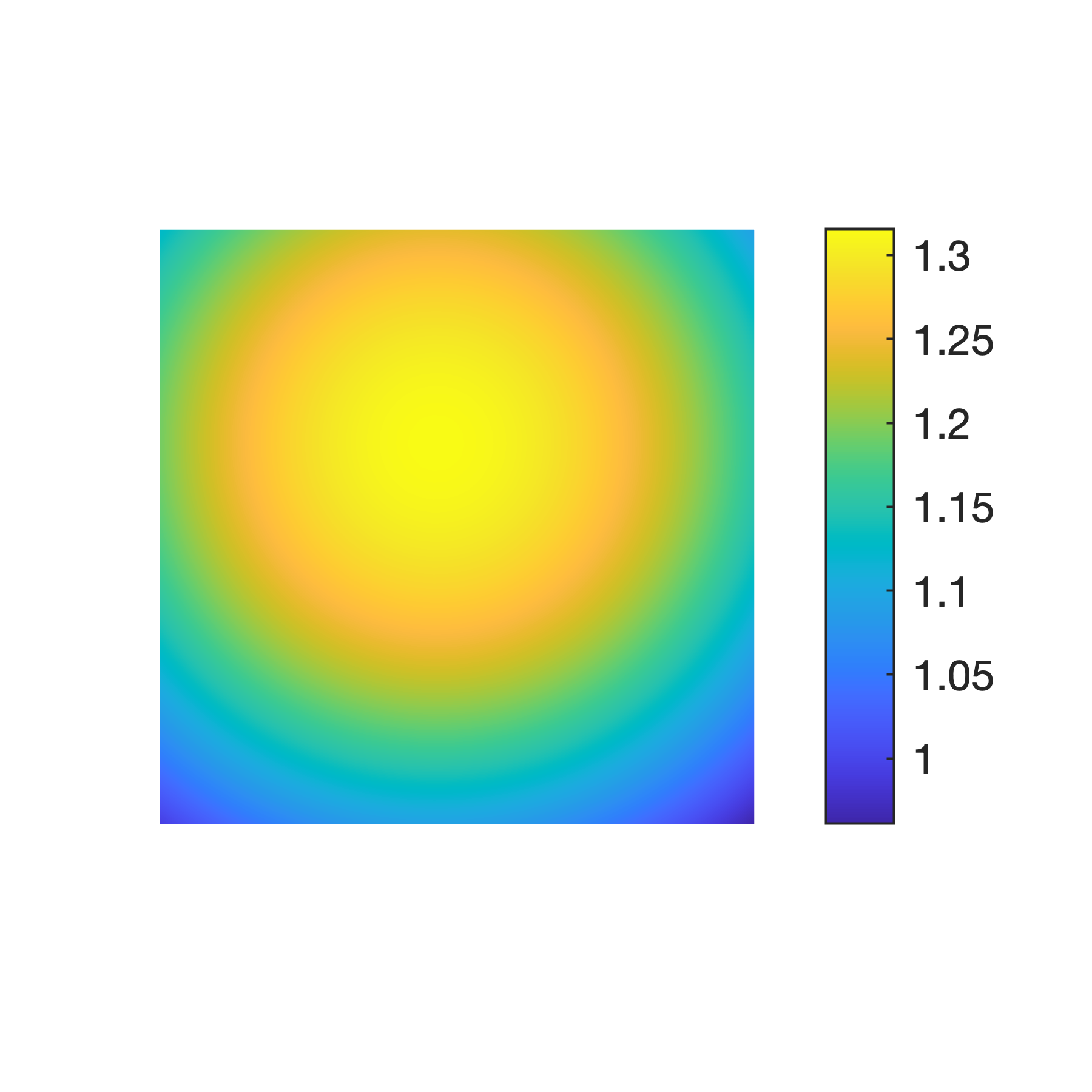}	\includegraphics[width=0.22\textwidth,trim=1.5cm 1.5cm 0.5cm 1cm,clip]{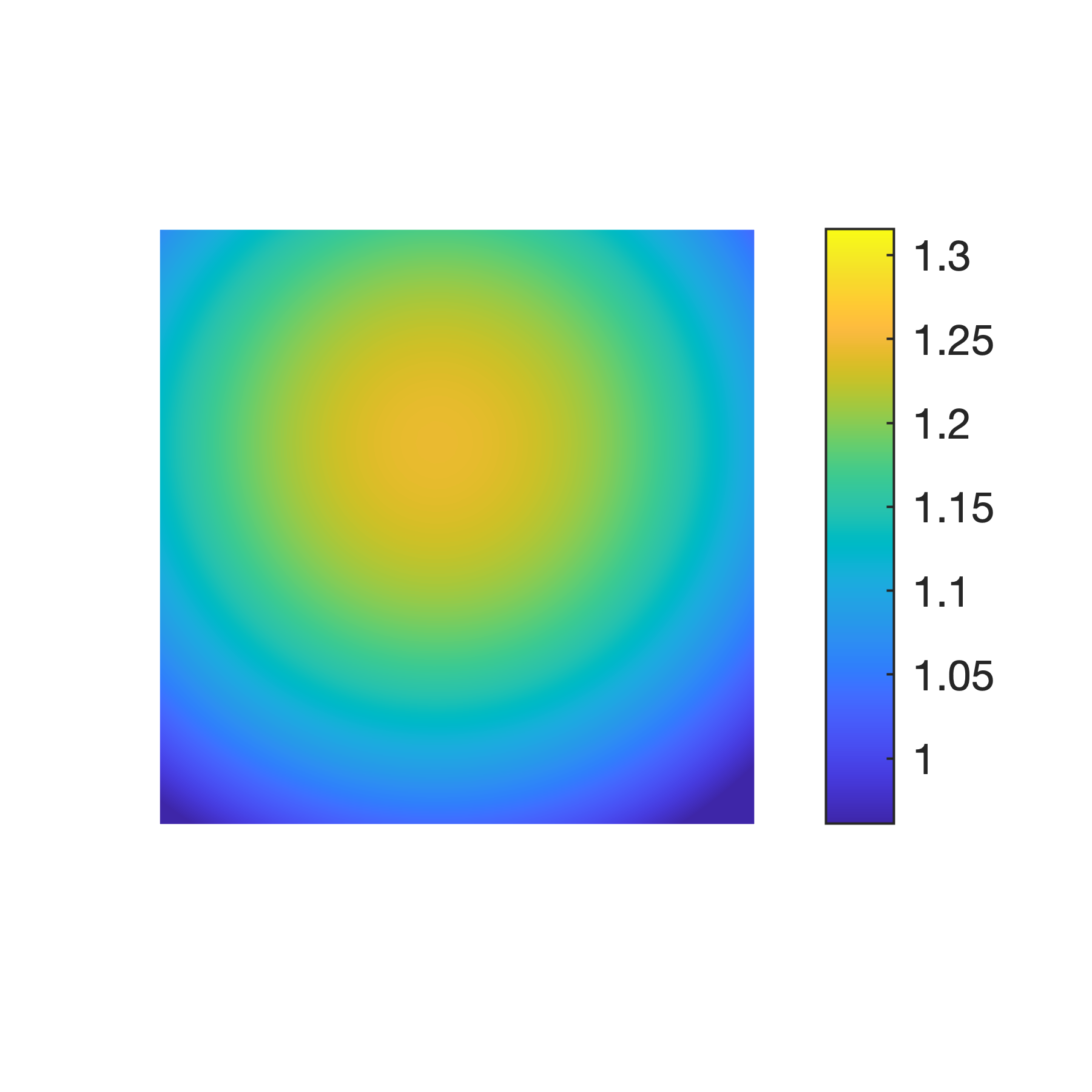}	\includegraphics[width=0.22\textwidth,trim=1.5cm 1.5cm 0.4cm 1cm,clip]{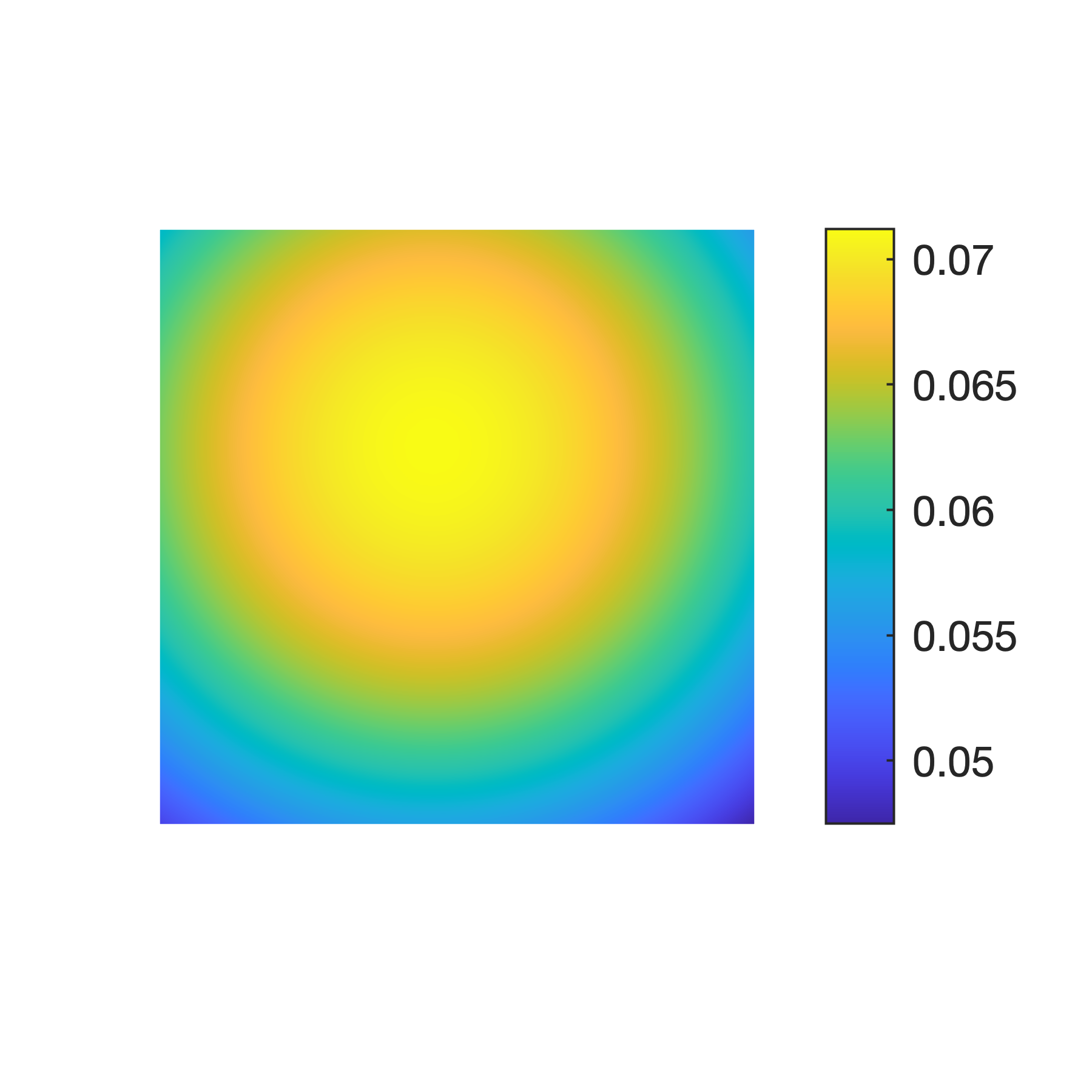}\\	\includegraphics[width=0.22\textwidth,trim=1.5cm 1.5cm 0.5cm 1.5cm,clip]{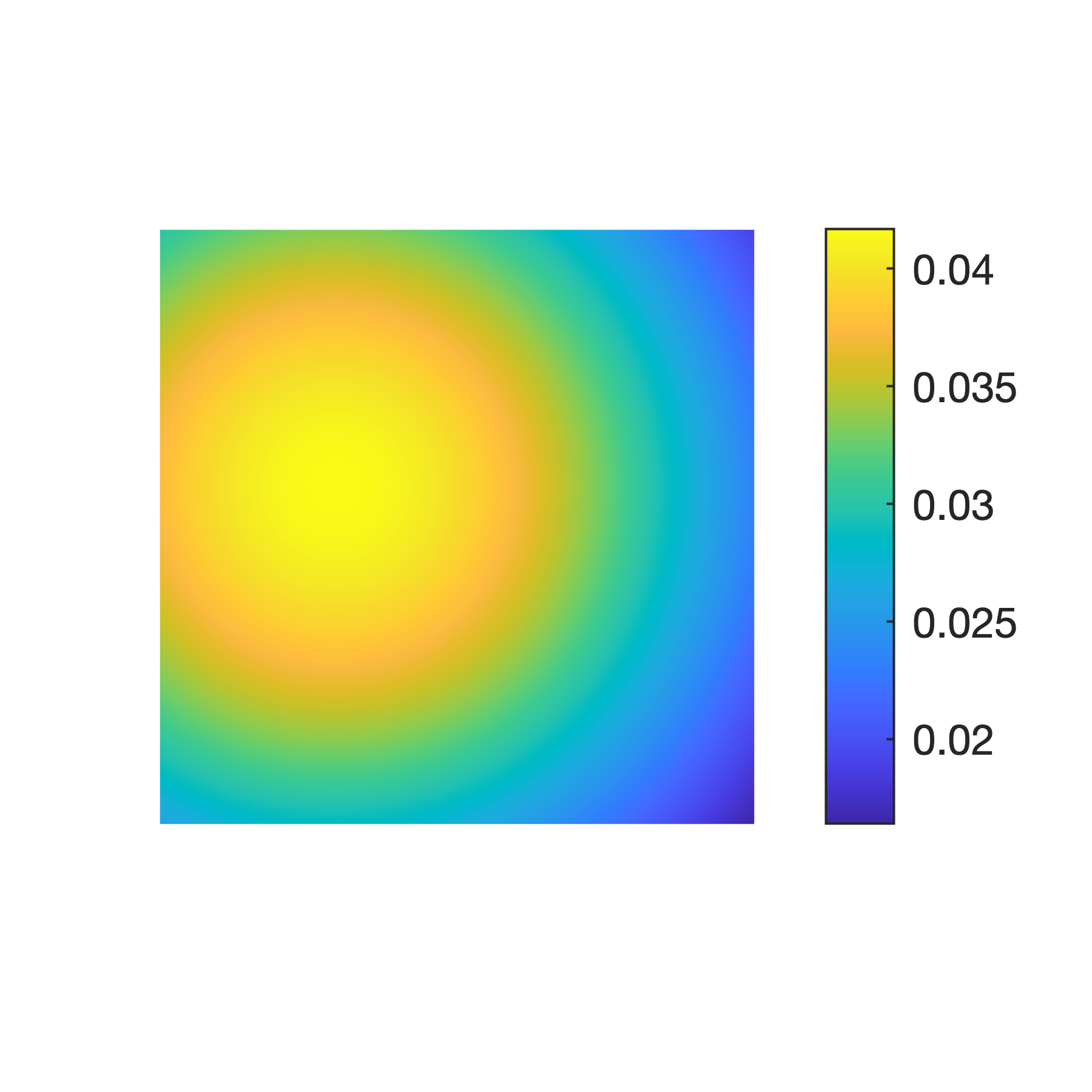}	\includegraphics[width=0.22\textwidth,trim=1.5cm 1.5cm 0.5cm 1.5cm,clip]{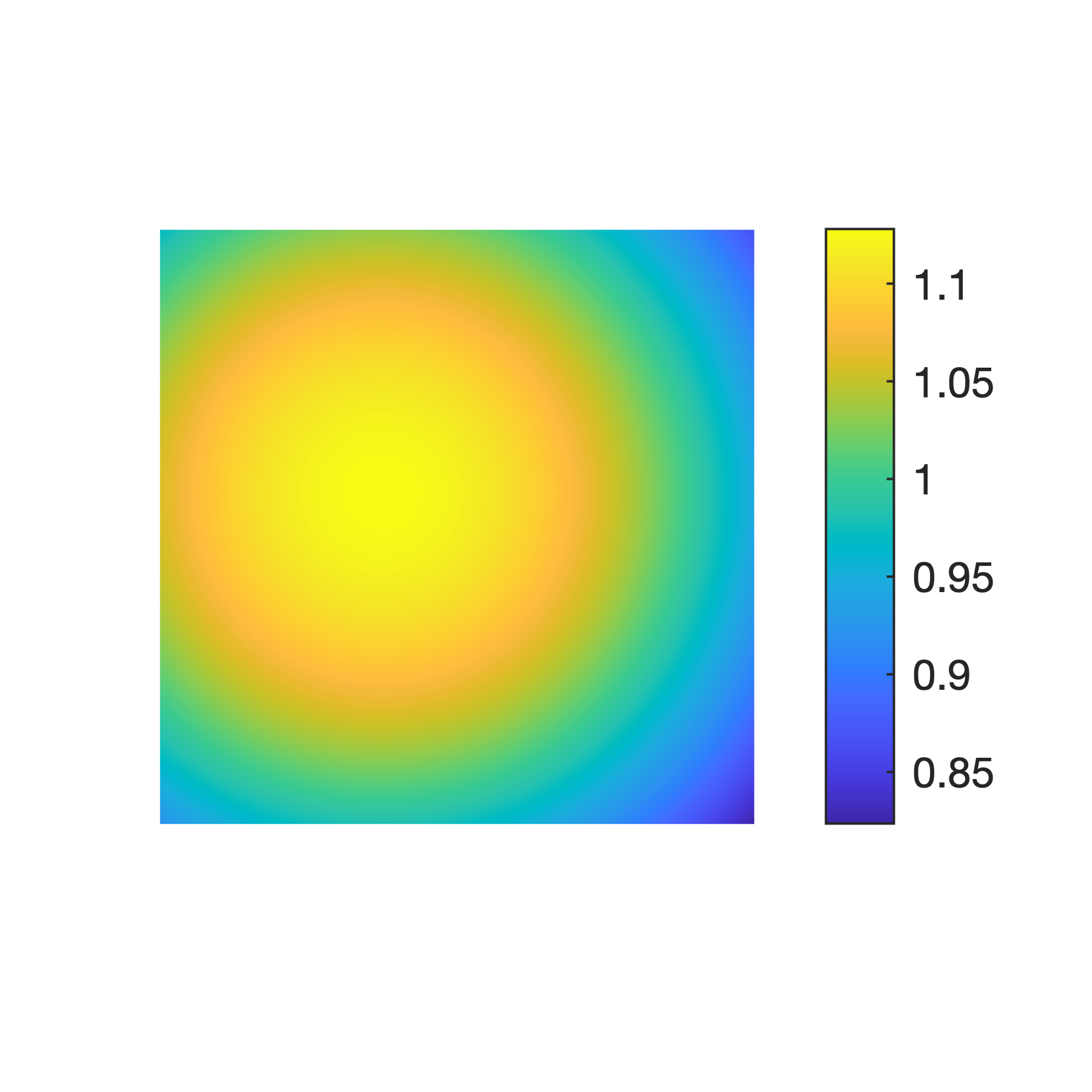}	\includegraphics[width=0.22\textwidth,trim=1.5cm 1.5cm 0.5cm 1.5cm,clip]{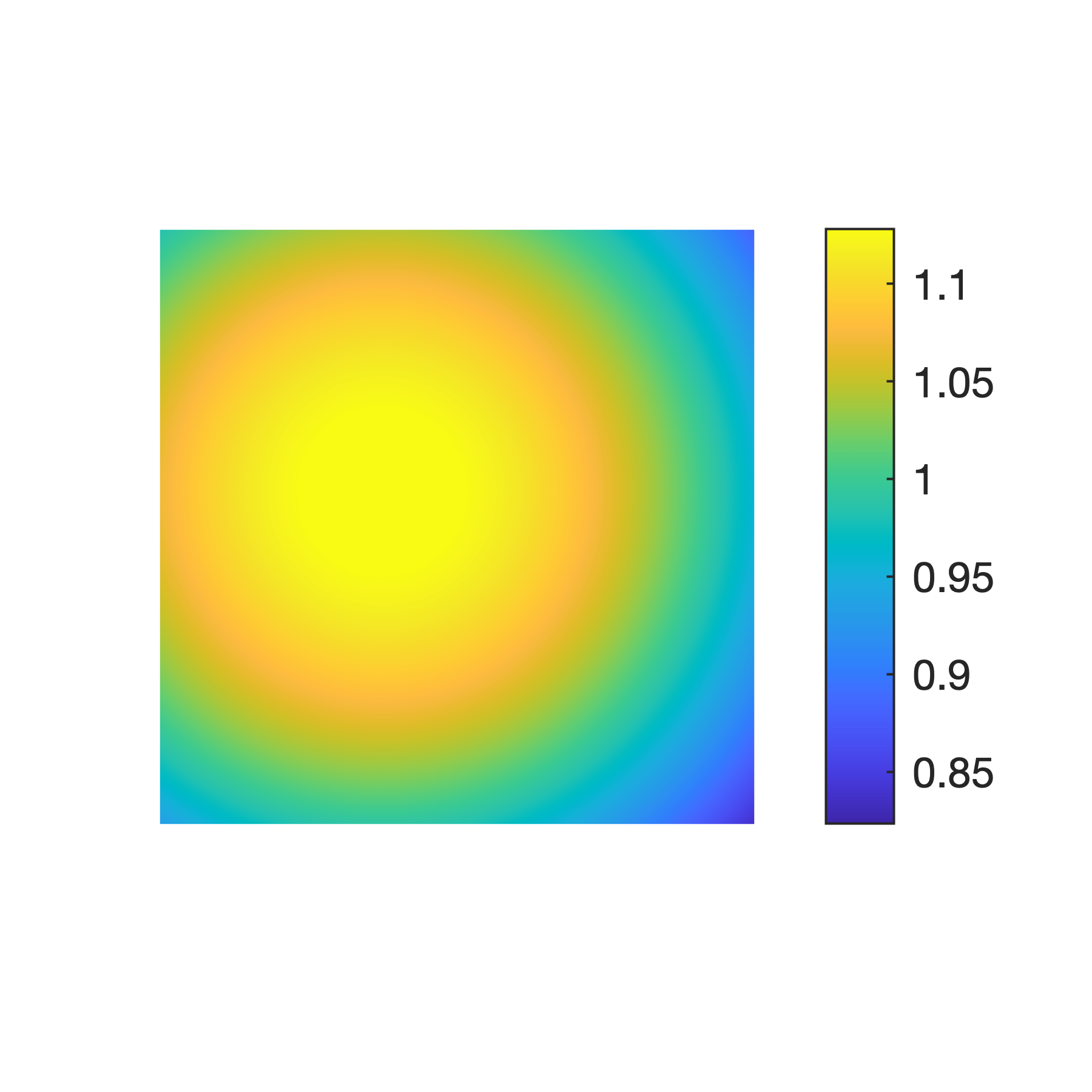}	\includegraphics[width=0.22\textwidth,trim=1.5cm 1.5cm 0.4cm 1.5cm,clip]{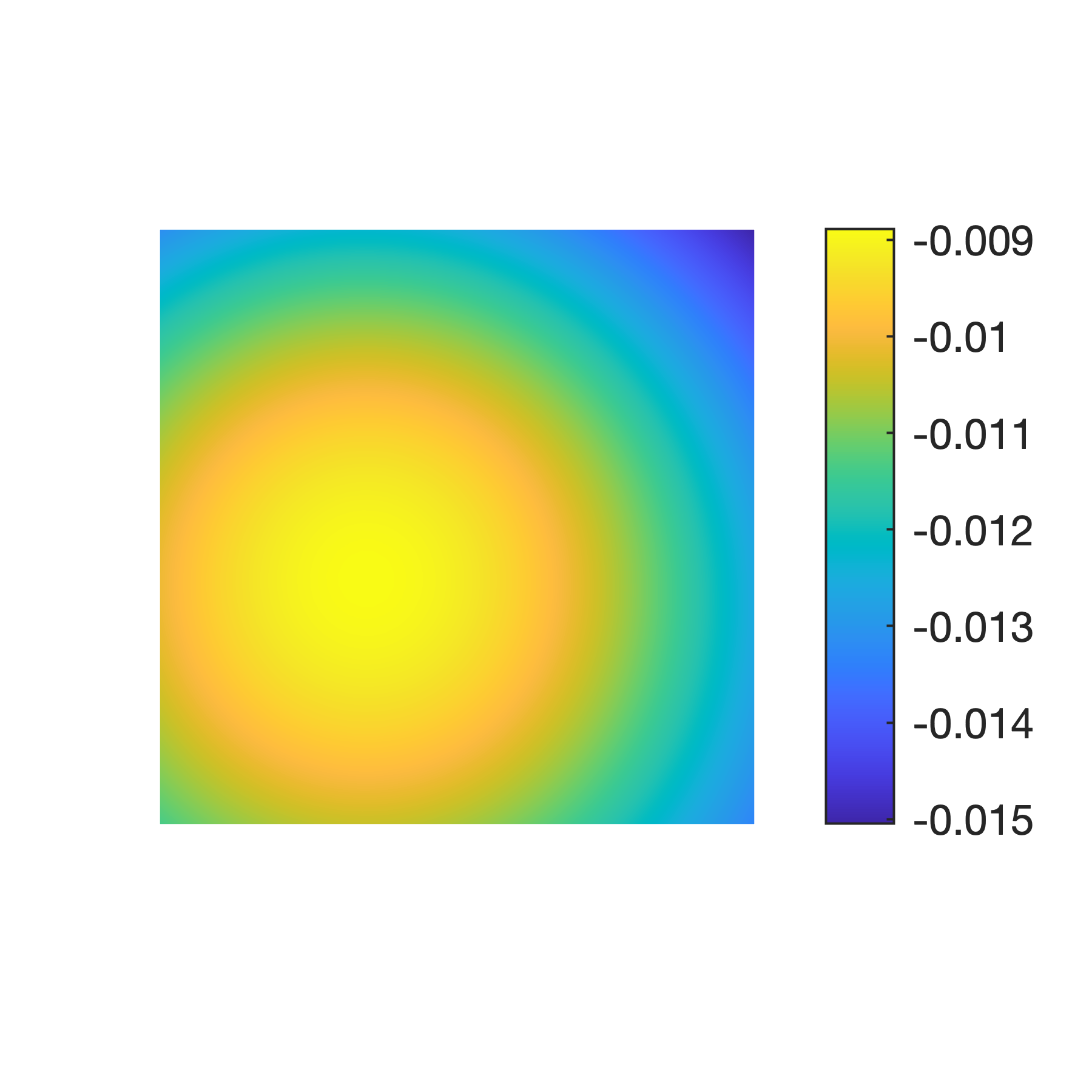}
\caption{Three randomly selected diffusion fields $\gamma$ from the testing dataset and the corresponding absorption fields $\sigma$ for Numerical Experiment I. From the left to the right are the exact $\gamma$, the corresponding exact $\sigma$, the $\sigma$ predicted by the learned polynomial model, and the pointwise error in the learning.}
\label{FIG:absorption_nn_samples_example_two}
\end{figure}

\noindent\textbf{Learning and testing performance.} We perform learning using the polynomial model~\eqref{EQ:Poly} with $n = 2$ following the computational procedure documented in~\Cref{SUBSEC:Polynomial}. To demonstrate the success in learning, we present in~\Cref{FIG:absorption_nn_samples_example_two} the prediction of three randomly selected absorption fields $\sigma$ from the testing dataset, which are generated by the diffusion fields $\gamma$ in~\eqref{EQ:diffusion_absorption_gaussian}. From left to right, we show the true diffusion fields, the corresponding exact absorption fields, the absorption fields predicted by the learned polynomial model, and the errors in the prediction. We see that the learned polynomial model gives a reasonable accuracy of the prediction.

\begin{figure}[htb!]
	\centering
	\includegraphics[width=0.3\textwidth,trim=1cm 1.5cm 0.5cm 1.5cm,clip]{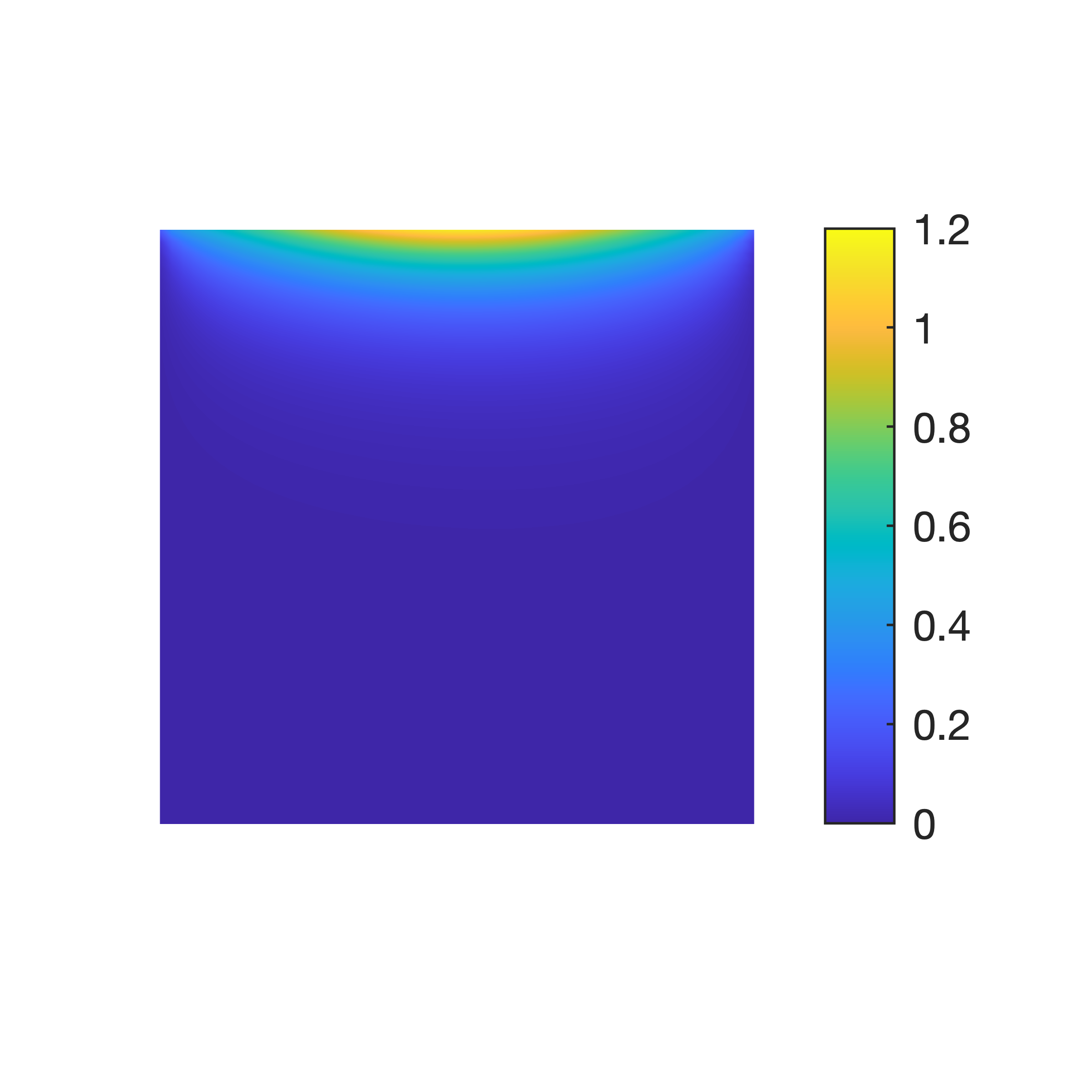}
\includegraphics[width=0.3\textwidth,trim=1cm 1.5cm 0.5cm 1.5cm,clip]{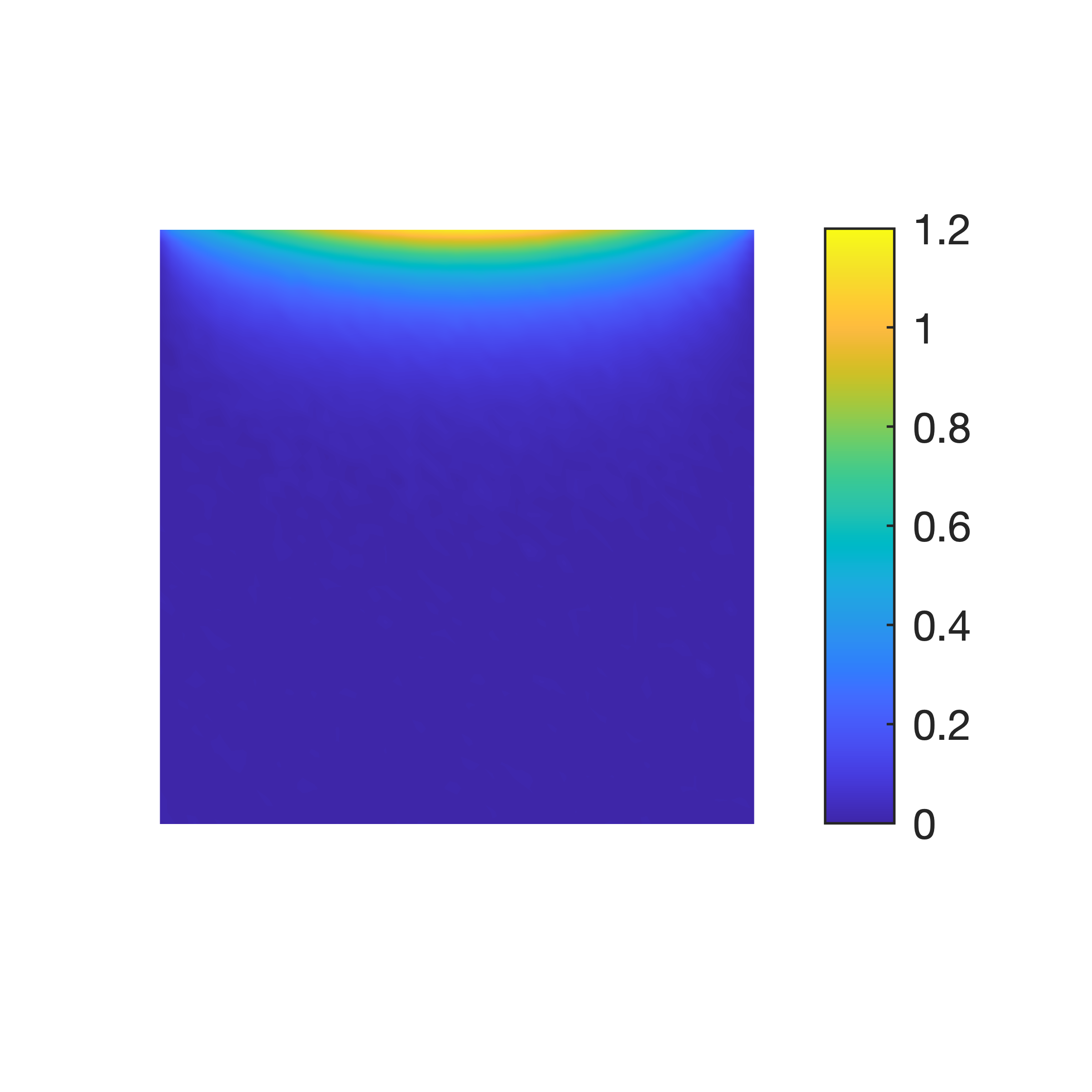}
\includegraphics[width=0.3\textwidth,trim=1cm 1.5cm 0.5cm 1.5cm,clip]{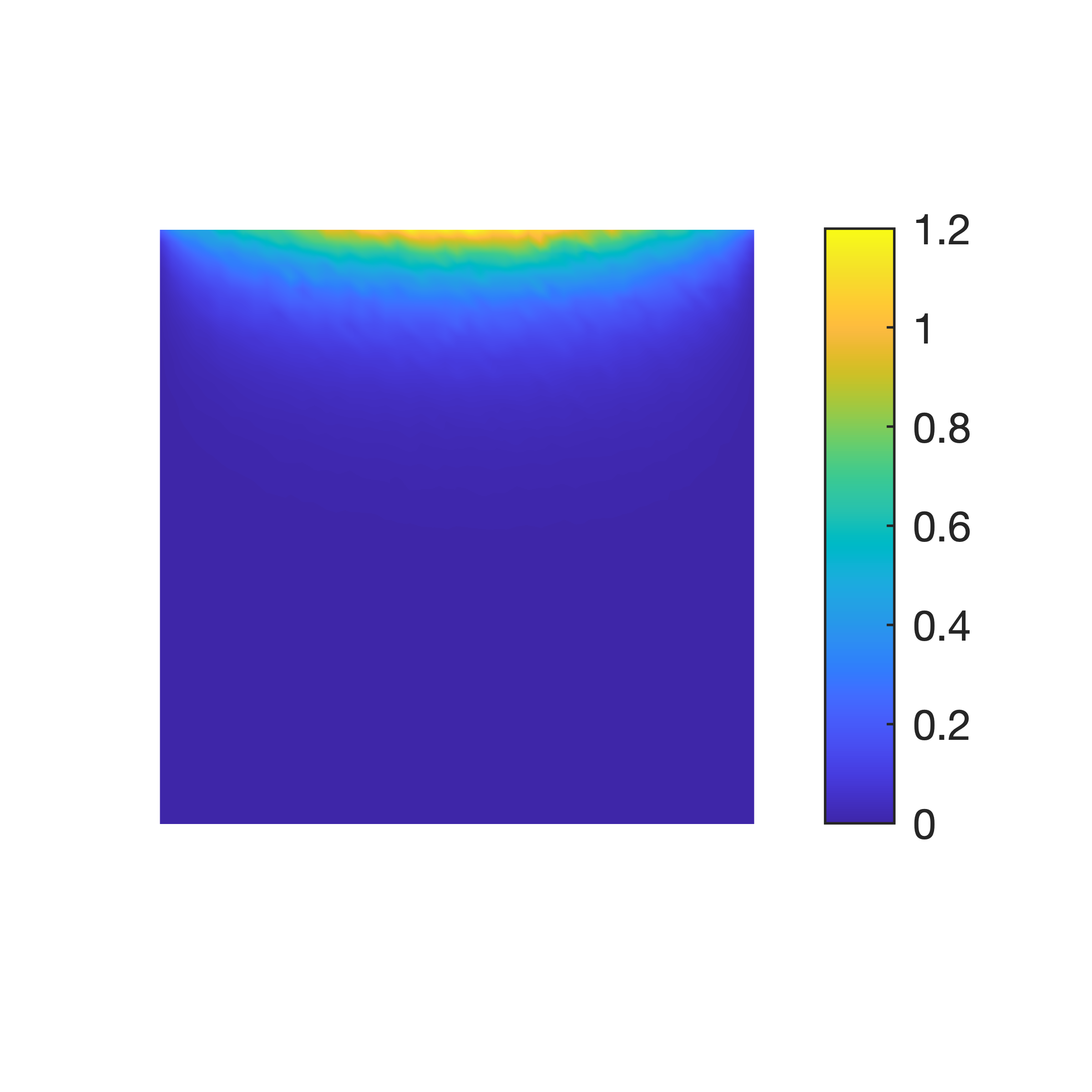} 
	\caption{Internal data $H$ generated with illuminating source~\eqref{source_top_diffusion_example} for Numerical Experiment I. From left to right are respectively noise-free data \RED{($H$)}, data with $5\%$ additive Gaussian noise \RED{($H^\delta = H + 0.05\mbox{mean}(H)\eta$)}, and data with $5\%$ multiplication Gaussian noise \RED{($H^\delta = H(1+ 0.05\eta$)). Here, $\eta$ is a standard normal random variable.}}
	\label{FIG:usigma_data_example2}
\end{figure}
\noindent\textbf{Data-driven joint inversion.} With the learned relation between $\gamma$ and $\sigma$, we perform joint reconstructions of $(\gamma, \sigma)$ from internal data~\eqref{EQ:Diff Data}. The internal data are again synthetic; see Figure~\ref{FIG:usigma_data_example2} for the datum $H$ generated with the boundary source 
\begin{equation}\label{source_top_diffusion_example}
    S(\bx)  = e^{-\frac{(x - 0.5)^2}{0.25}}, \ \ \ \bx=(x, y),\ \ \ x\in(0, 1),\ y=1\,,
\end{equation}
on the top segment of the boundary. 
We implemented Algorithm~\ref{ALG:JointInv} along with a BFGS quasi-Newton reconstruction algorithm for the optimization problems~\eqref{EQ:Min 0} and ~\eqref{EQ:Min j}, which we use the {\rm MATLAB} inline function {\rm fmincon} with the `bfgs' option. In particular, the termination tolerances are set to be $10^{-7}$ for the first-order optimality condition, and $10^{-7}$ for the norm of the updating step size. 

\RED{To test the stability of the proposed coupled scheme, we add Gaussian noise with zero mean and $5\%$ standard deviation to the data used. Figure~\ref{FIG:recover_step_two_example2} displays the reconstructed diffusion field, as well as the reconstructed absorption fields. We observe that the reconstructions are fairly accurate in general. Though the resolution of the reconstructions from the noisy data is not as good as the one from the noise-free data, the bump locations for both the diffusion and the absorption fields are clear and accurate.}
\begin{figure}[htb!]
	\centering
 \includegraphics[width=0.22\textwidth,trim=1.5cm 2cm 0.5cm 1cm,clip]{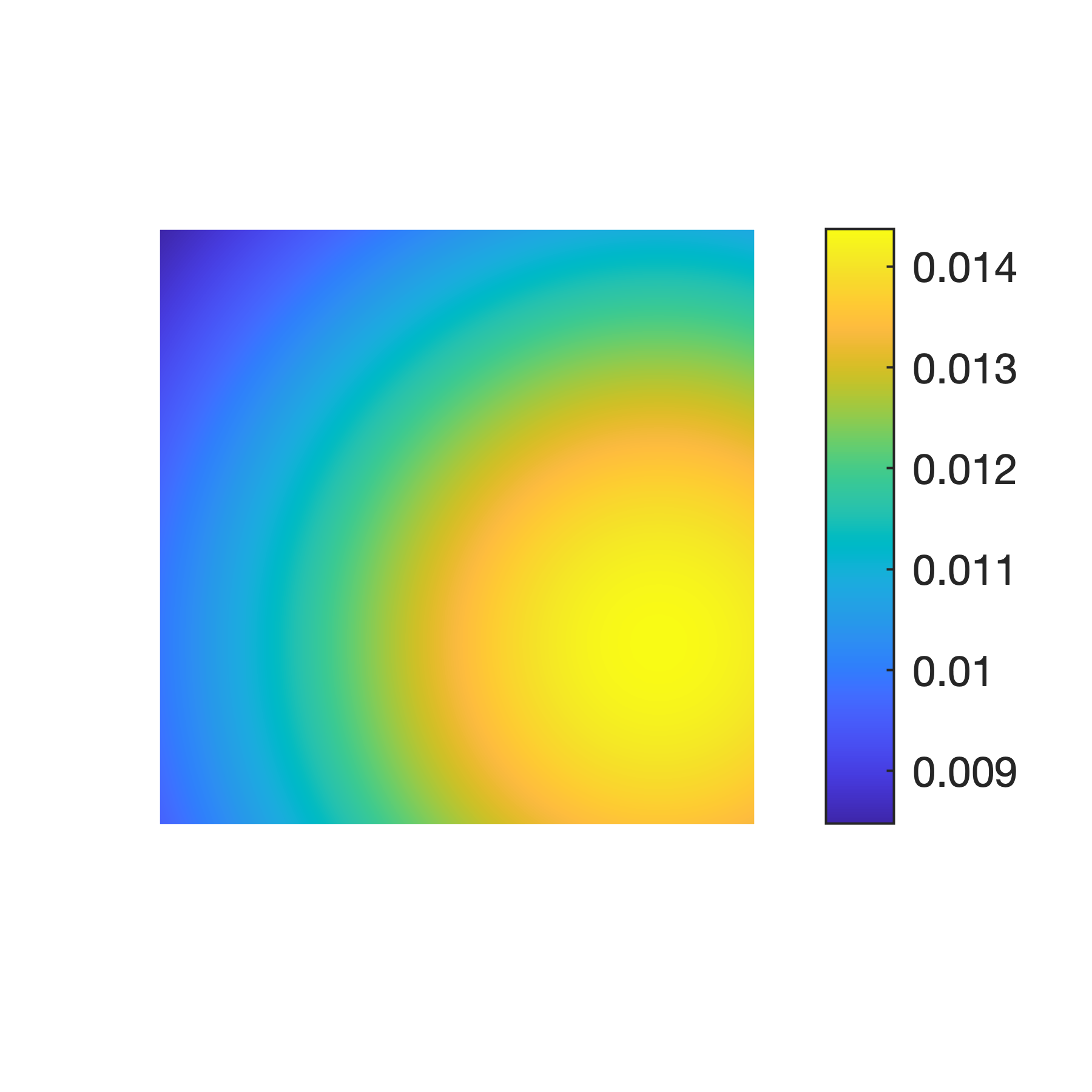}
	\includegraphics[width=0.22\textwidth,trim=1.5cm 2cm 0.5cm 1cm,clip]{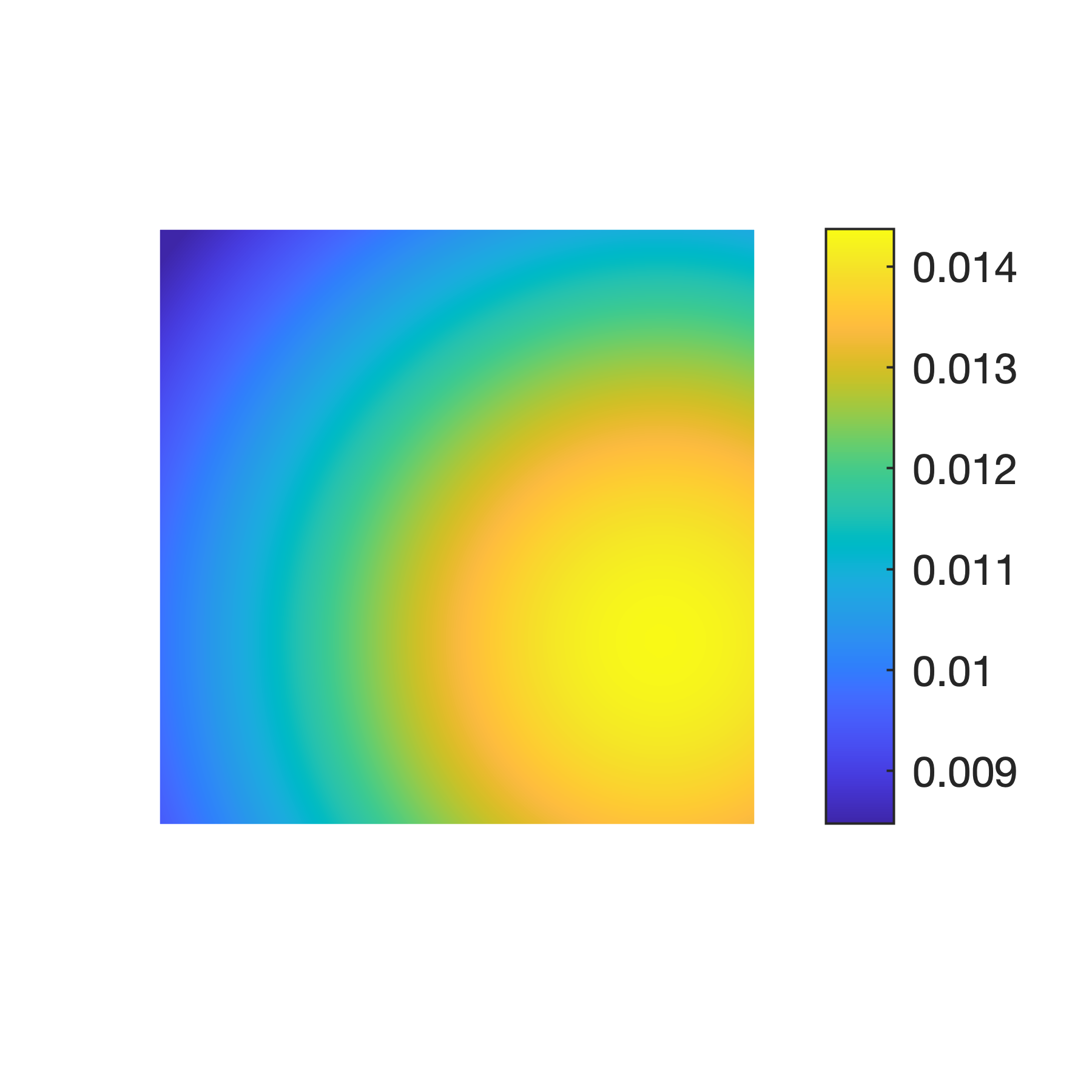}
	\includegraphics[width=0.22\textwidth,trim=1.5cm 2cm 0.5cm 1cm,clip]{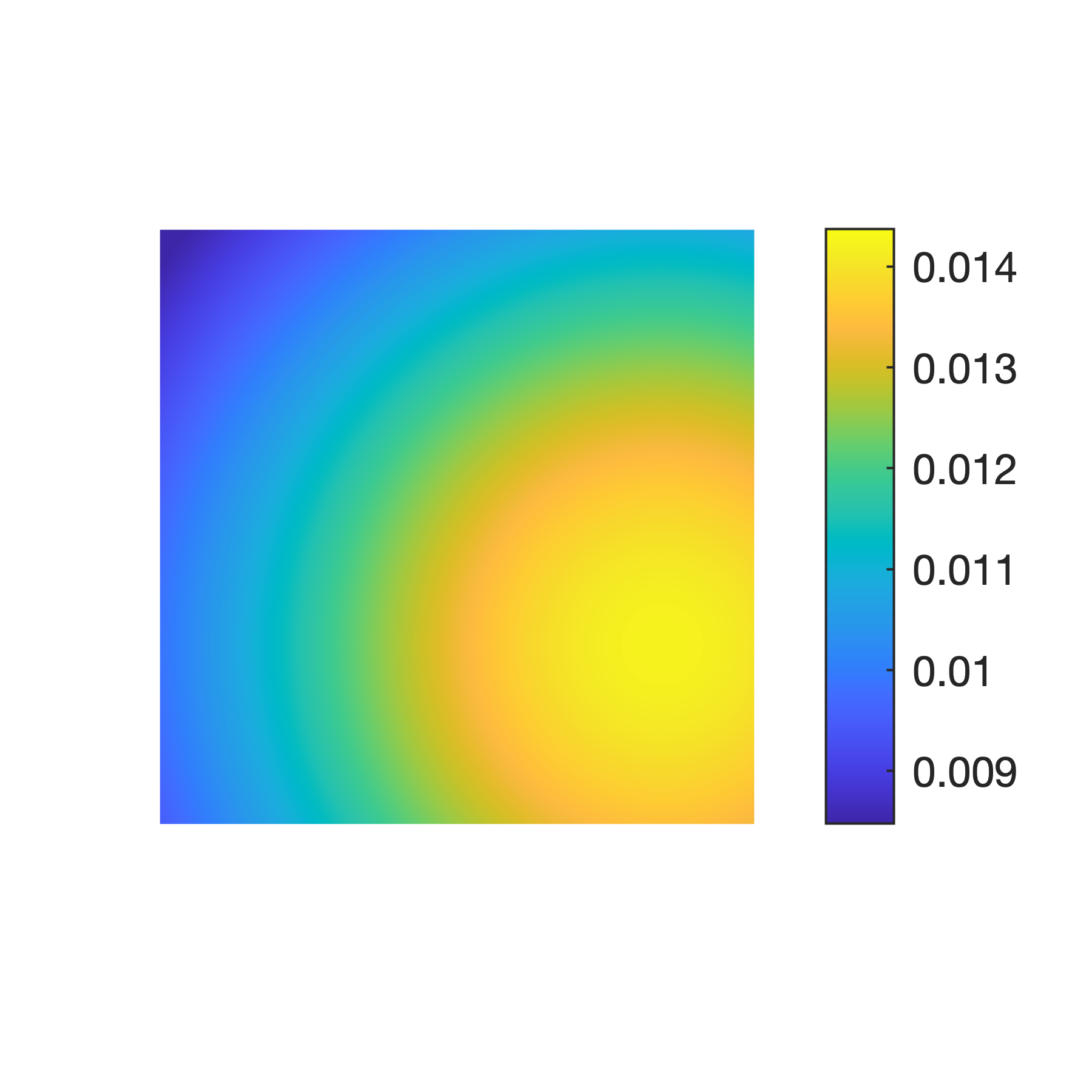}\\
	\includegraphics[width=0.22\textwidth,trim=1.5cm 2cm 0.5cm 1cm,clip]{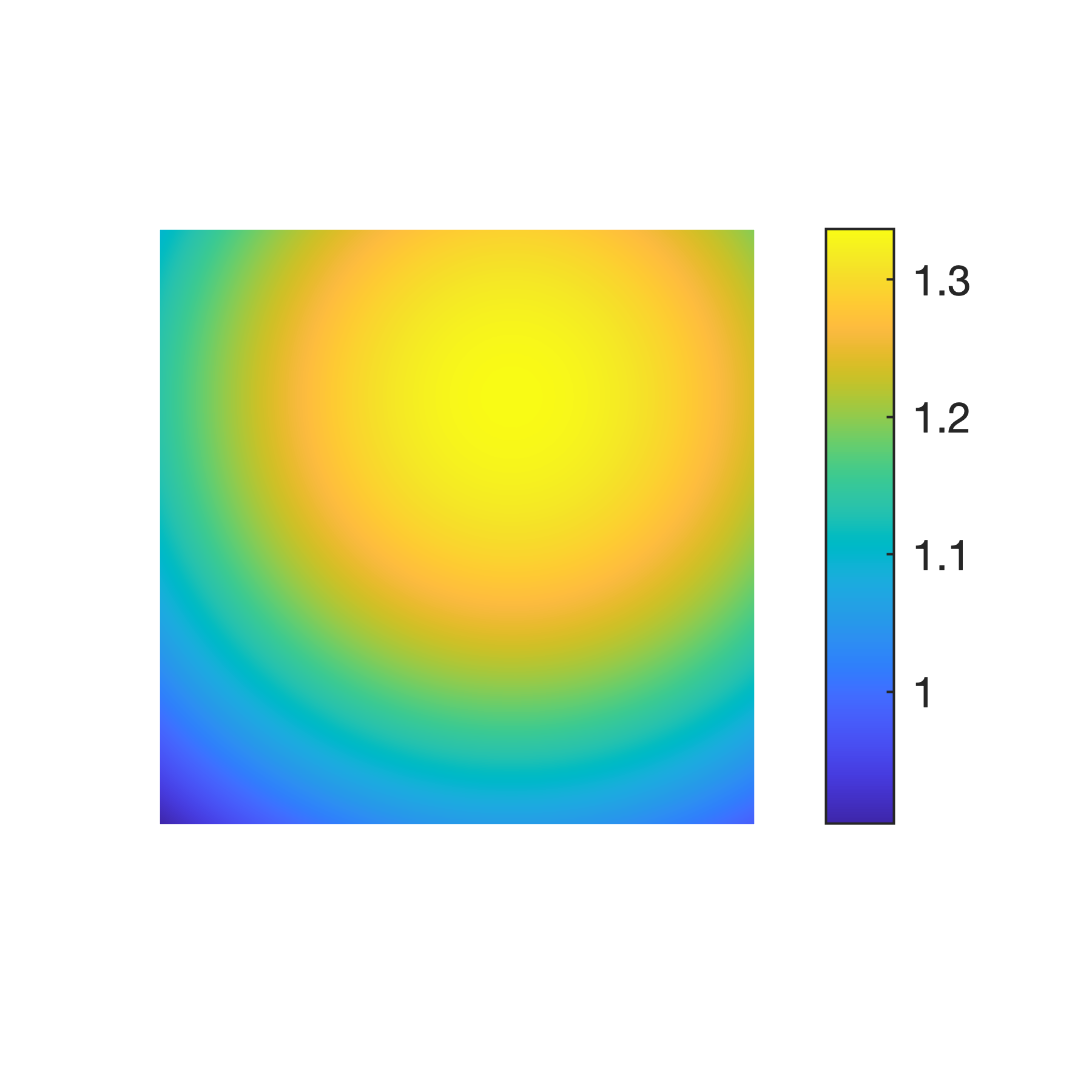}
	\includegraphics[width=0.22\textwidth,trim=1.5cm 2cm 0.5cm 1cm,clip]{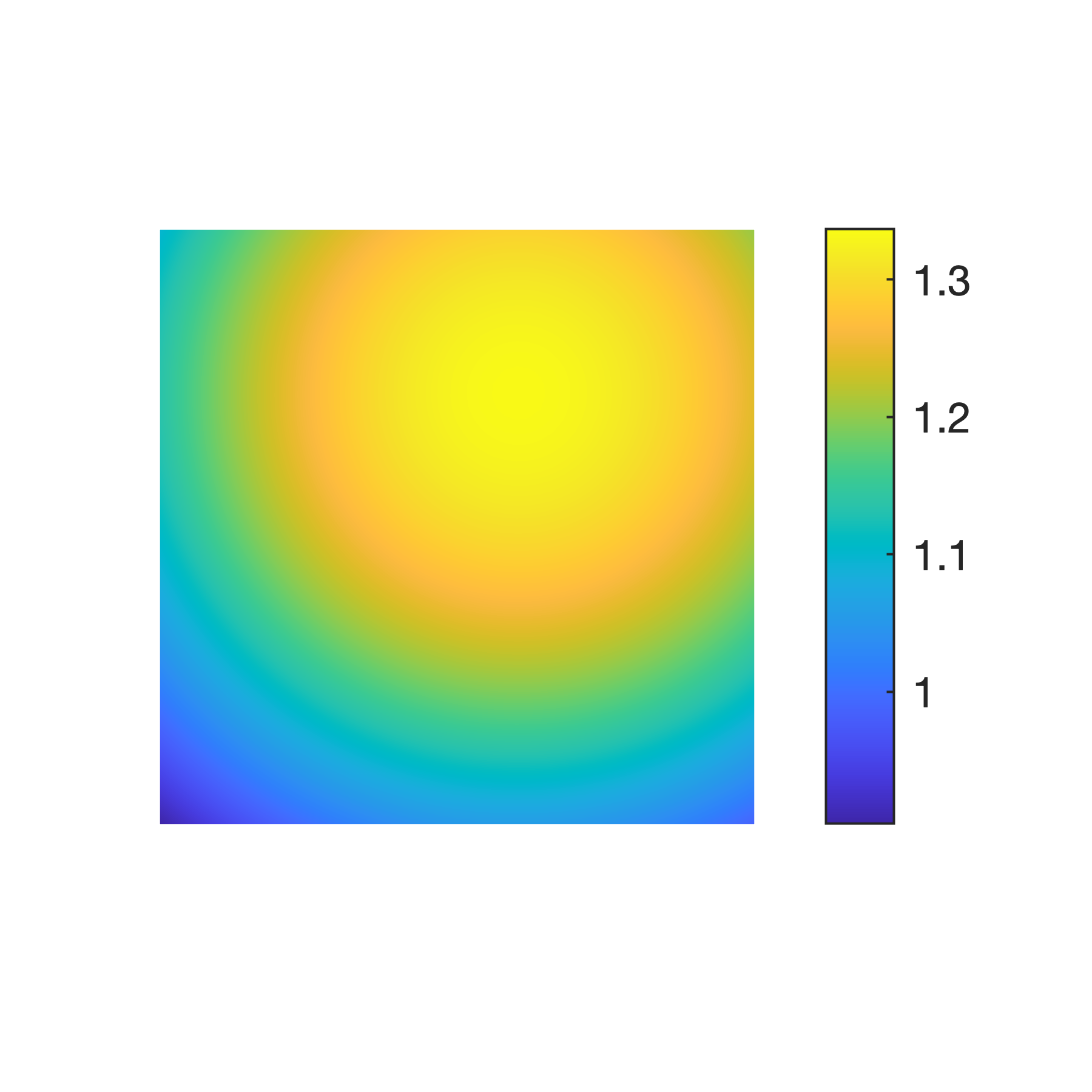}
	\includegraphics[width=0.22\textwidth,trim=1.5cm 2cm 0.5cm 1cm,clip]{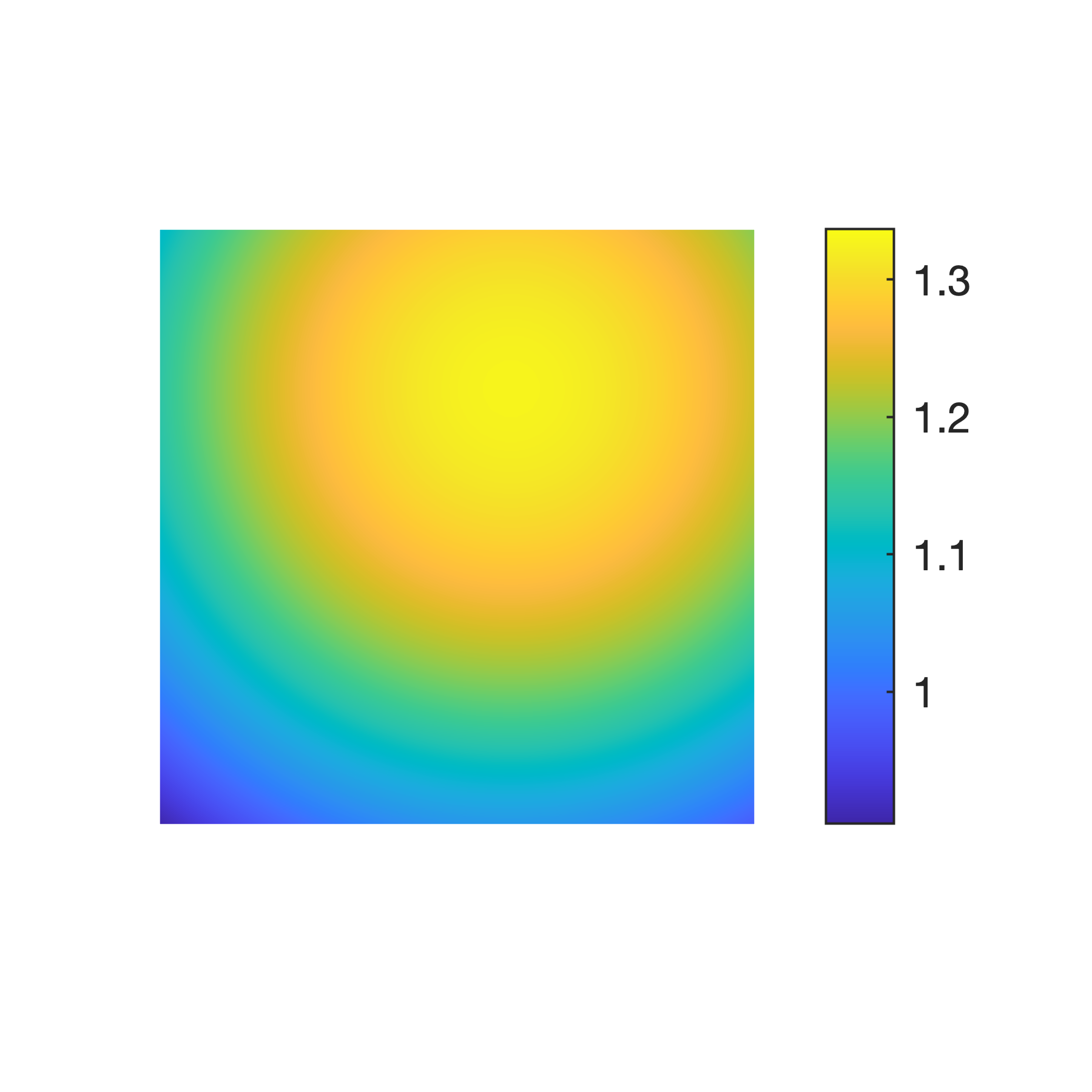}
\caption{Joint inversion of $(\gamma, \sigma)$ (top, bottom) in Numerical Experiment I. Shown from left to right are true coefficients, reconstruction from noise-free data, 
and reconstruction from data with $5\%$ multiplication Gaussian noise.}
	\label{FIG:recover_step_two_example2}
\end{figure}

\subsubsection{Experiment II: Learning with neural network model}

In the second numerical experiment, we consider the coefficients $(\gamma, \sigma)$ of the forms, in the Fourier domain,
\begin{equation}\label{diffusion_fourier}
\gamma({\bx}) = \sum_{k_x,k_y = 0}^K \wh\gamma_{\bk}\phi_{\bk}(x, y), \qquad \sigma({\bx}) = \sum_{k_x,k_y = 0}^K \wh \sigma_{\bf k}\phi_{\bf k}(x, y)
\end{equation}
where ${\bf k} = (k_x, k_y)$ and $\phi_{\bf k}(x, y) = \cos(k_x \pi x)\cos(k_y \pi y)$ are the eigenfunctions in~\eqref{EQ:Eigenfunction} for the domain $\Omega=(0, 1)^2$. We assume that the generalized Fourier coefficients are related by
\begin{equation}\label{EQ:Relation 1}
    \wh \sigma_{\bk} = \sum_{k_x',k_y' = 0}^K a_{{\bf k},{\bf k}'} \sin\big(\pi (2 + {\wh \gamma}_{{\bf k}'})^{k_x+k_y}\big)\,.
\end{equation}
This relation is sufficiently smooth to be approximated with polynomials of relatively low orders.

\begin{figure}[htb!]
	\centering
 \includegraphics[width=0.24\textwidth,trim=1cm 1.5cm 0.5cm 1cm,clip]{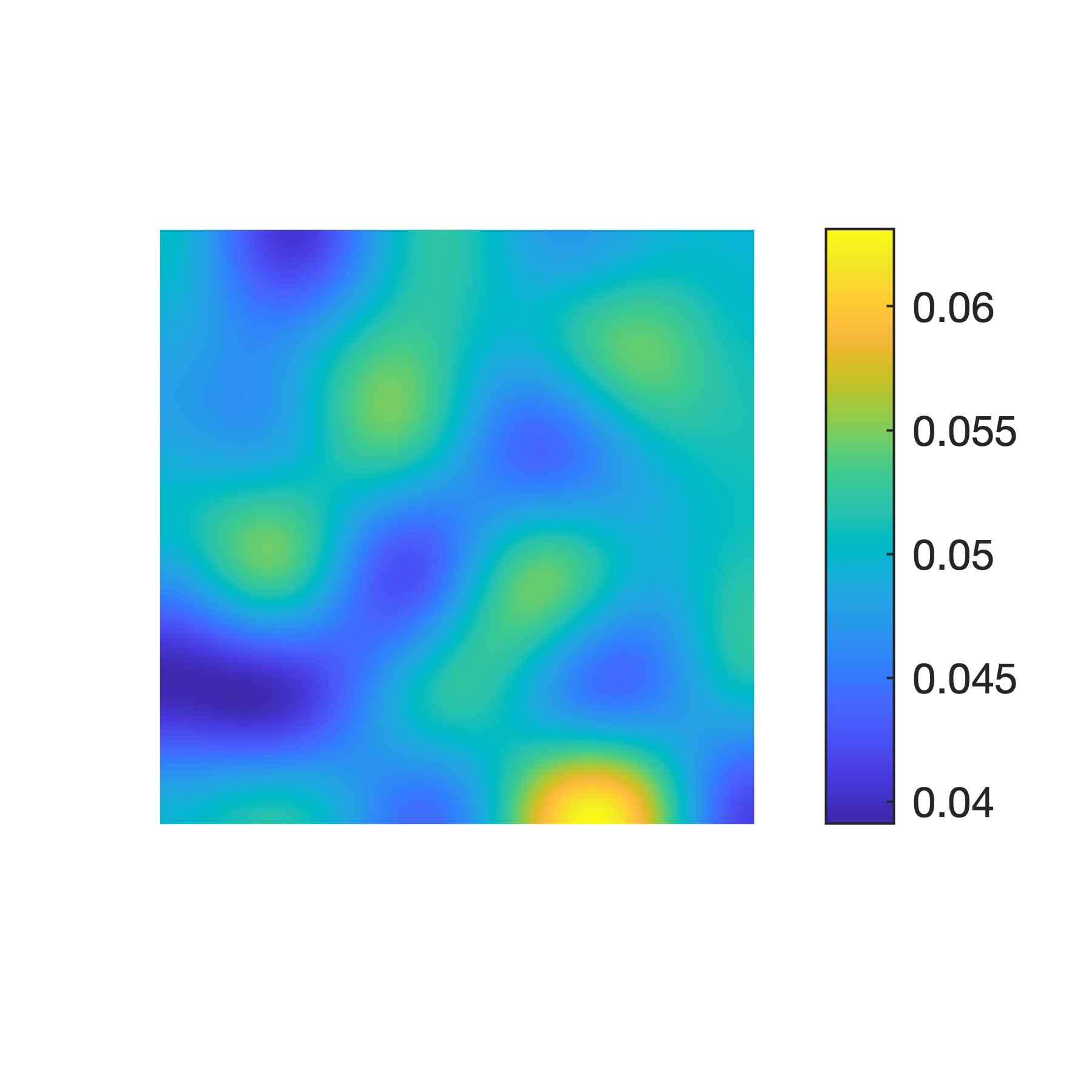}
 \includegraphics[width=0.24\textwidth,trim=1cm 1.5cm 0.5cm 1cm,clip]{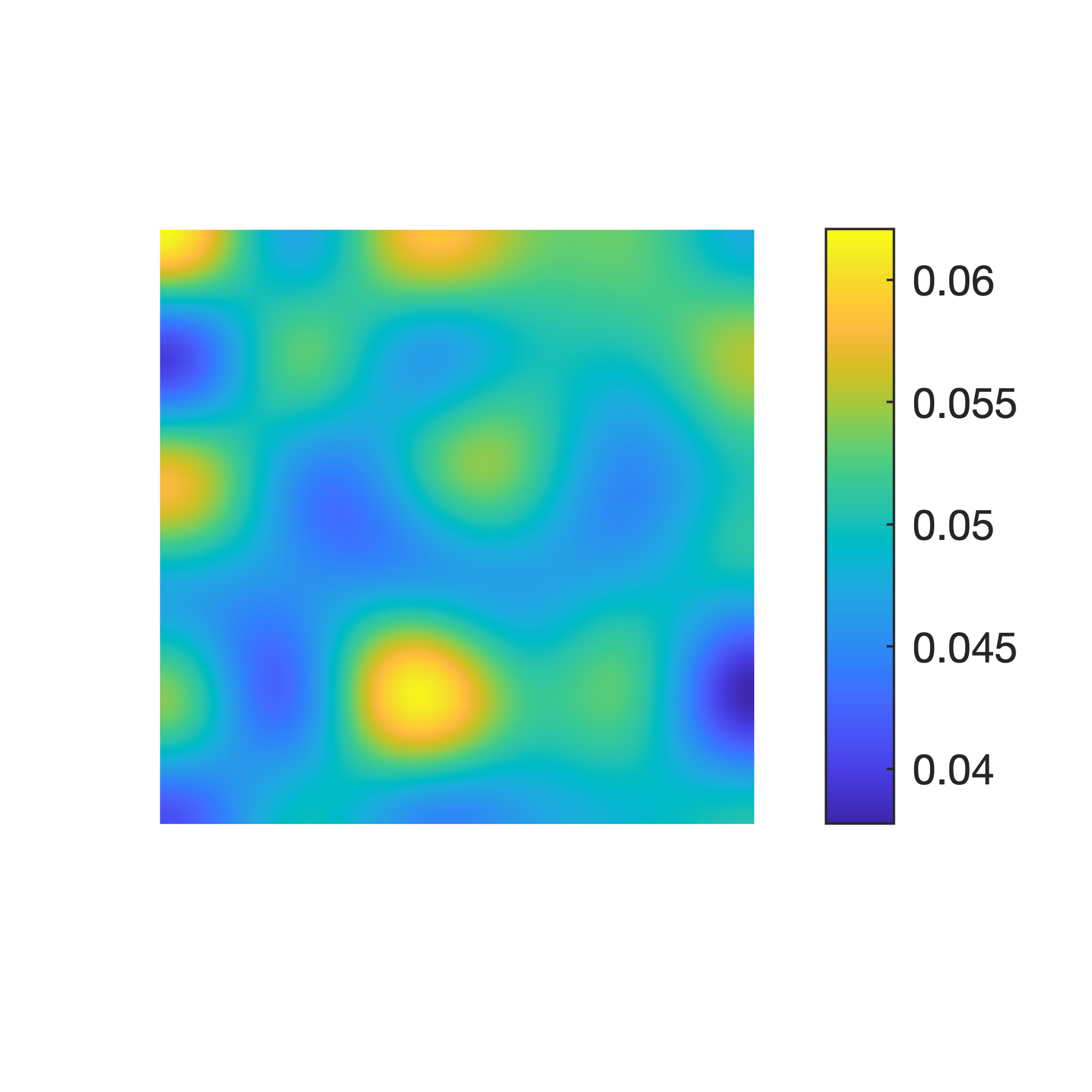}
 \includegraphics[width=0.24\textwidth,trim=1cm 1.5cm 0.5cm 1cm,clip]{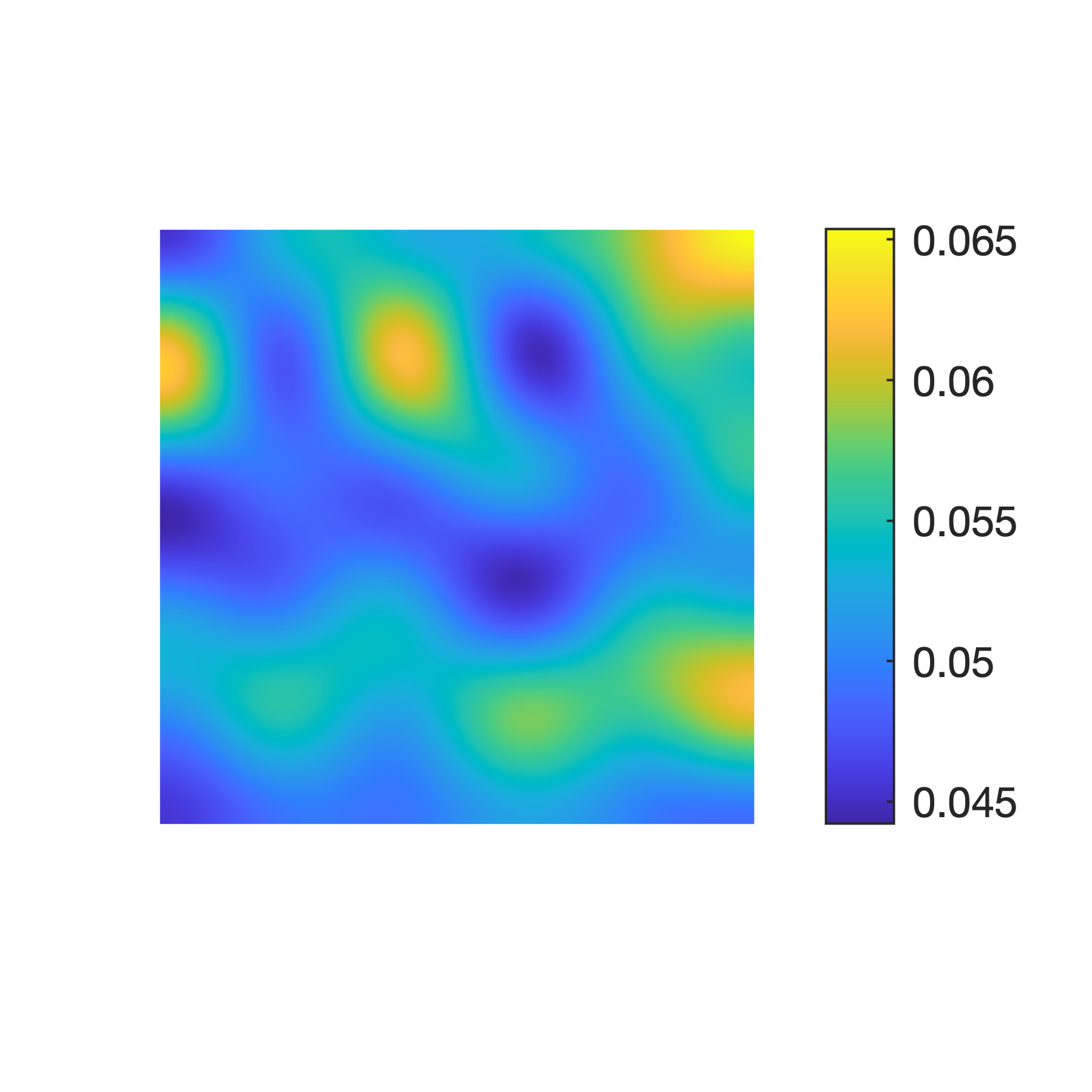}
 \includegraphics[width=0.24\textwidth,trim=1cm 1.5cm 0.5cm 1cm,clip]{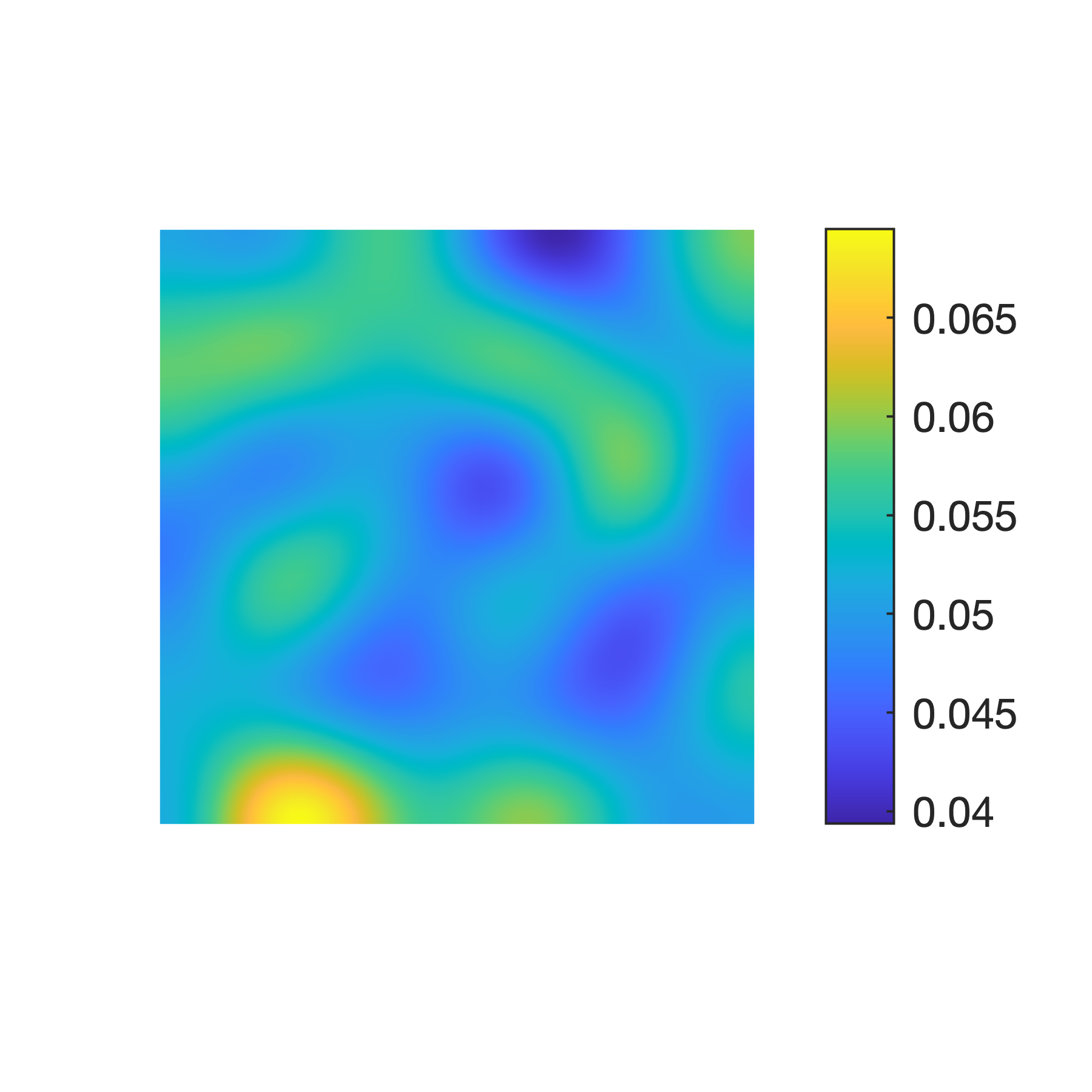}\\
 \includegraphics[width=0.24\textwidth,trim=1cm 2cm 0.5cm 1cm,clip]{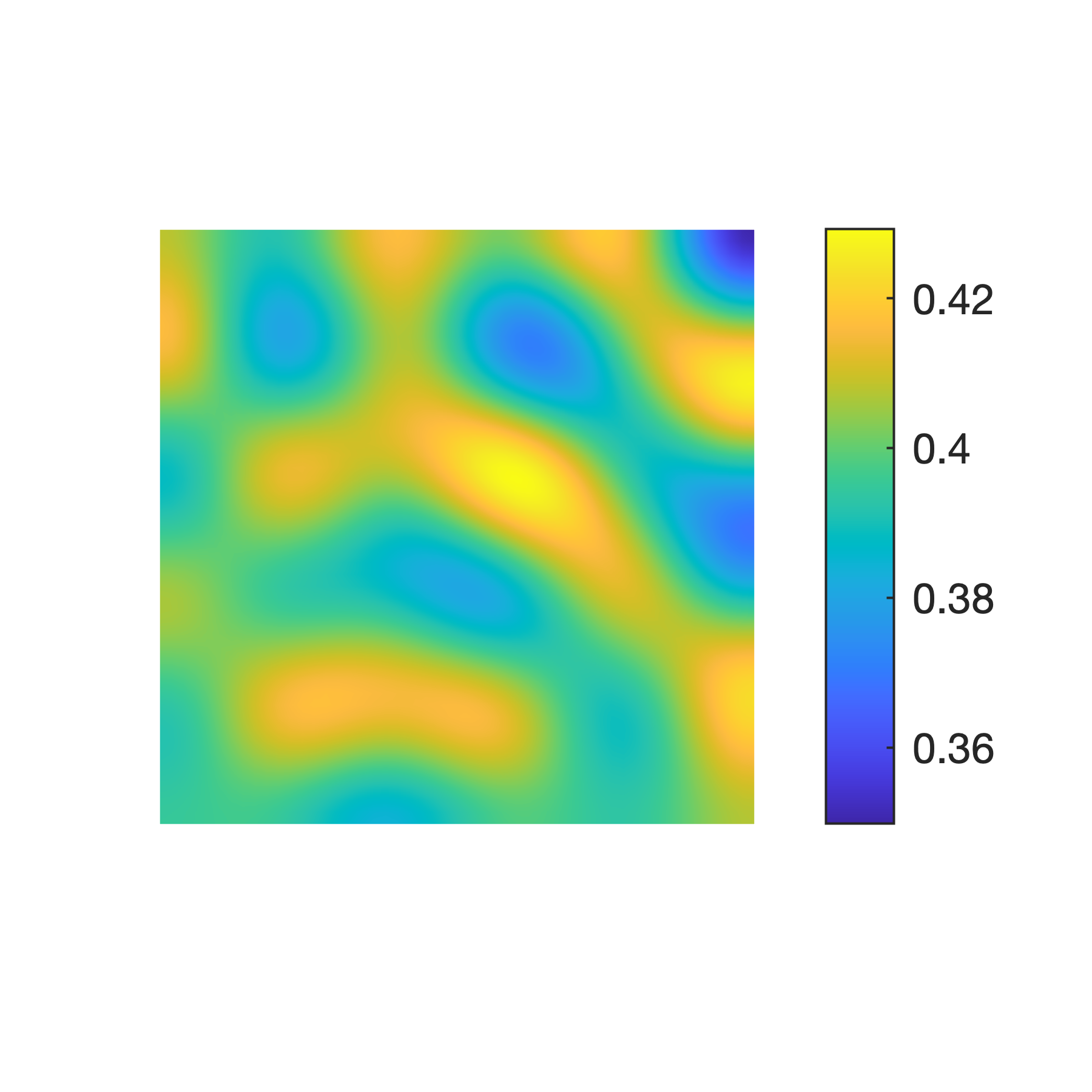}
 \includegraphics[width=0.24\textwidth,trim=1cm 2cm 0.5cm 1cm,clip]{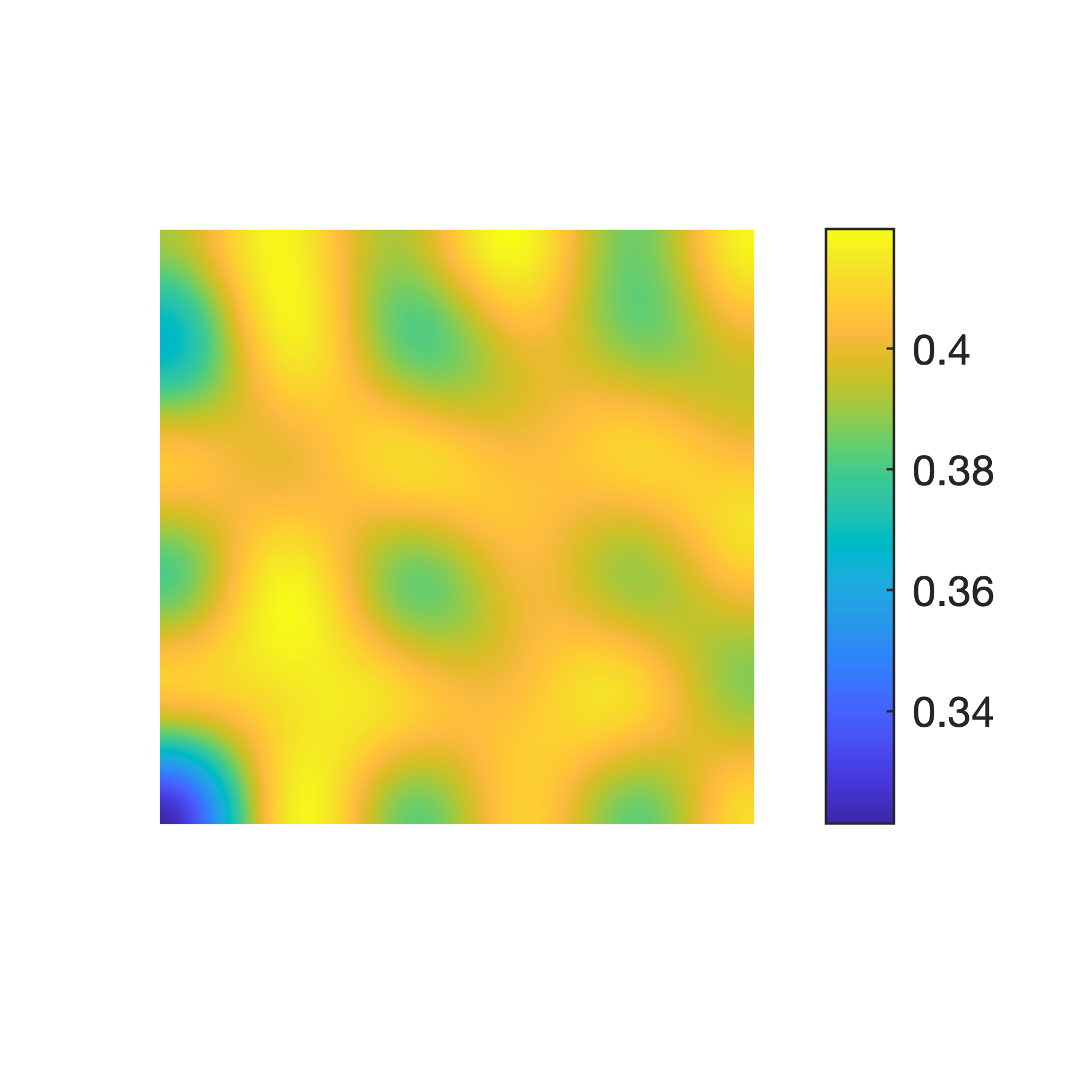}
 \includegraphics[width=0.24\textwidth,trim=1cm 2cm 0.5cm 1cm,clip]{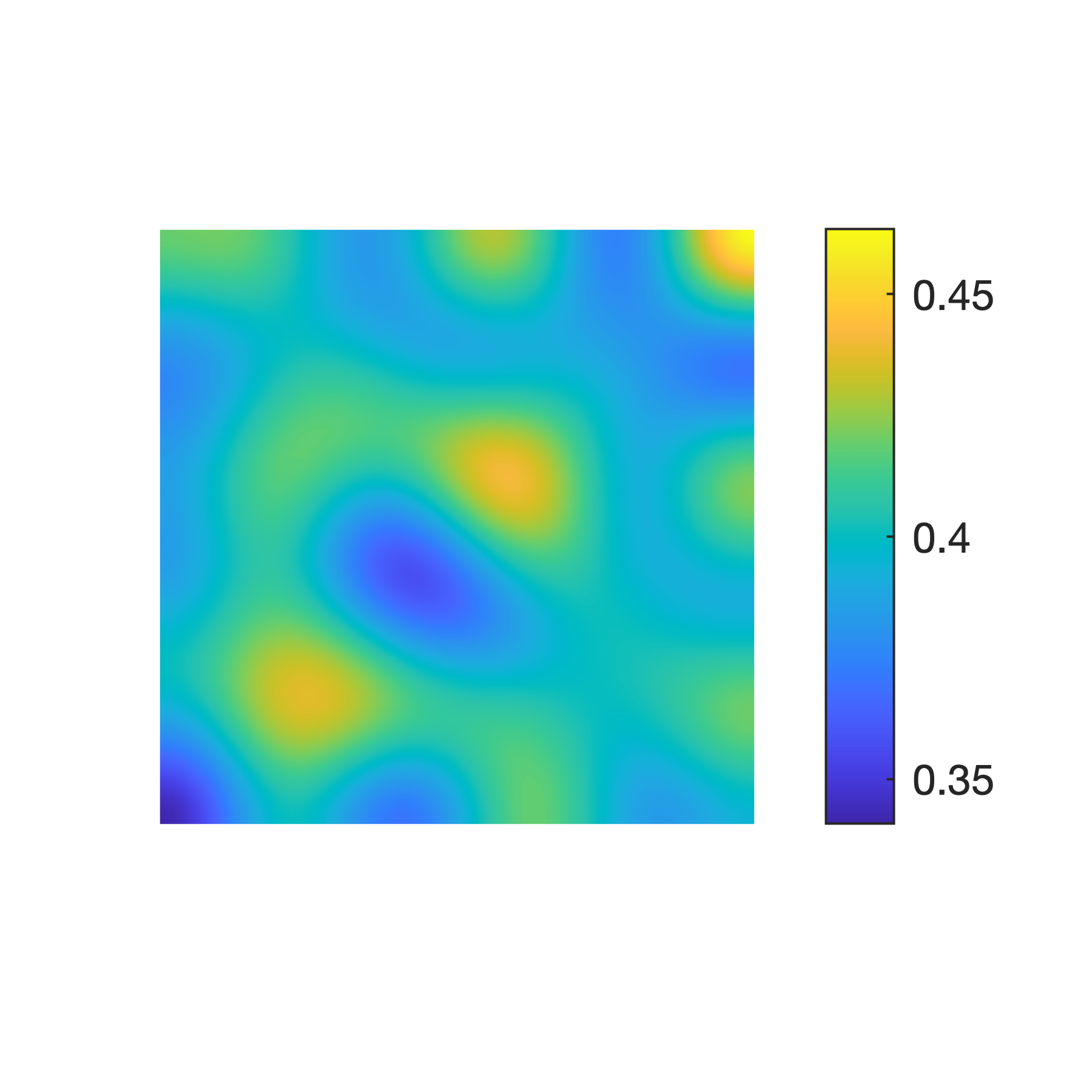}
 \includegraphics[width=0.24\textwidth,trim=1cm 2cm 0.5cm 1cm,clip]{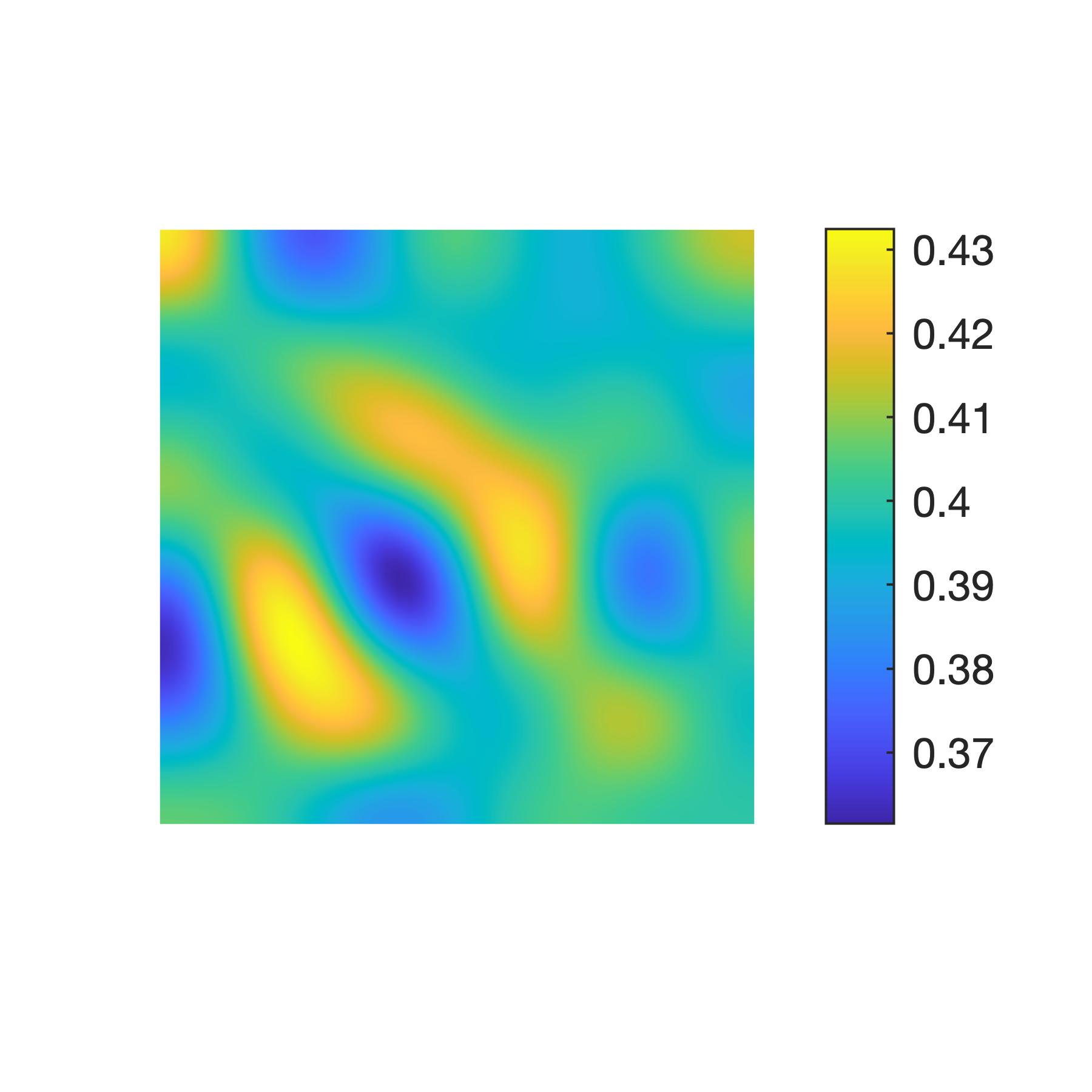}
	\caption{Four random samples of the $\gamma$ (top) and $\sigma$ (bottom) pair generated from \eqref{diffusion_fourier}-\eqref{EQ:Relation 1} with $K=5$ for the learning stage of Numerical Experiment II.}
	\label{FIG:diffusion_absorption_samples}
\end{figure}
\noindent\textbf{Learning dataset generation.} We generate a set of $N=10^4$ coupled diffusion-absorption fields $\{\gamma_j, \sigma_j\}_{j=1}^N$ based on the relation~\eqref{EQ:Relation 1}. More precisely, we randomly choose the coefficients $\{{\wh \gamma}_{{\bf k}}\}_{{\bf k}\in \mathbb{N}_0\times \mathbb{N}_0}$ from the uniform distribution $\mathcal{U}[-0.5,0.5]$, and for each ${\bf k}$ we pick $\{{a}_{{\bf k}, {\bf k}'}\}_{{\bf k}'\in \mathbb{N}_0\times \mathbb{N}_0}$ from the uniform distribution $\mathcal{U}[0,0.1]$ to construct $\wh\sigma_{\bf k}$. To impose the constraint on the coefficients' non-negativity, we rescale them linearly to make them in the right range of values. In Figure~\ref{FIG:diffusion_absorption_samples} we show some typical samples of the $(\gamma, \sigma)$ pair generated from \eqref{diffusion_fourier}-\eqref{EQ:Relation 1}. 


\noindent\textbf{Learning and testing performance.}
To learn the nonlinear relation $\mathcal{N}$ between $\gamma$ and $\sigma$, we perform a standard training-validation cycle on the neural network approximating the nonlinear relation. To be precise, before the training process starts, we randomly split the synthetic dataset of $N = 10^4$ data points into a training dataset and a testing dataset. The training dataset takes $80\%$ of the original data points, and the test dataset takes the rest $20\%$ of the data points.
\begin{figure}[htb!]
	\centering
 \includegraphics[width=0.22\textwidth,trim=0.2cm 0.1cm 0.3cm 0.2cm,clip]{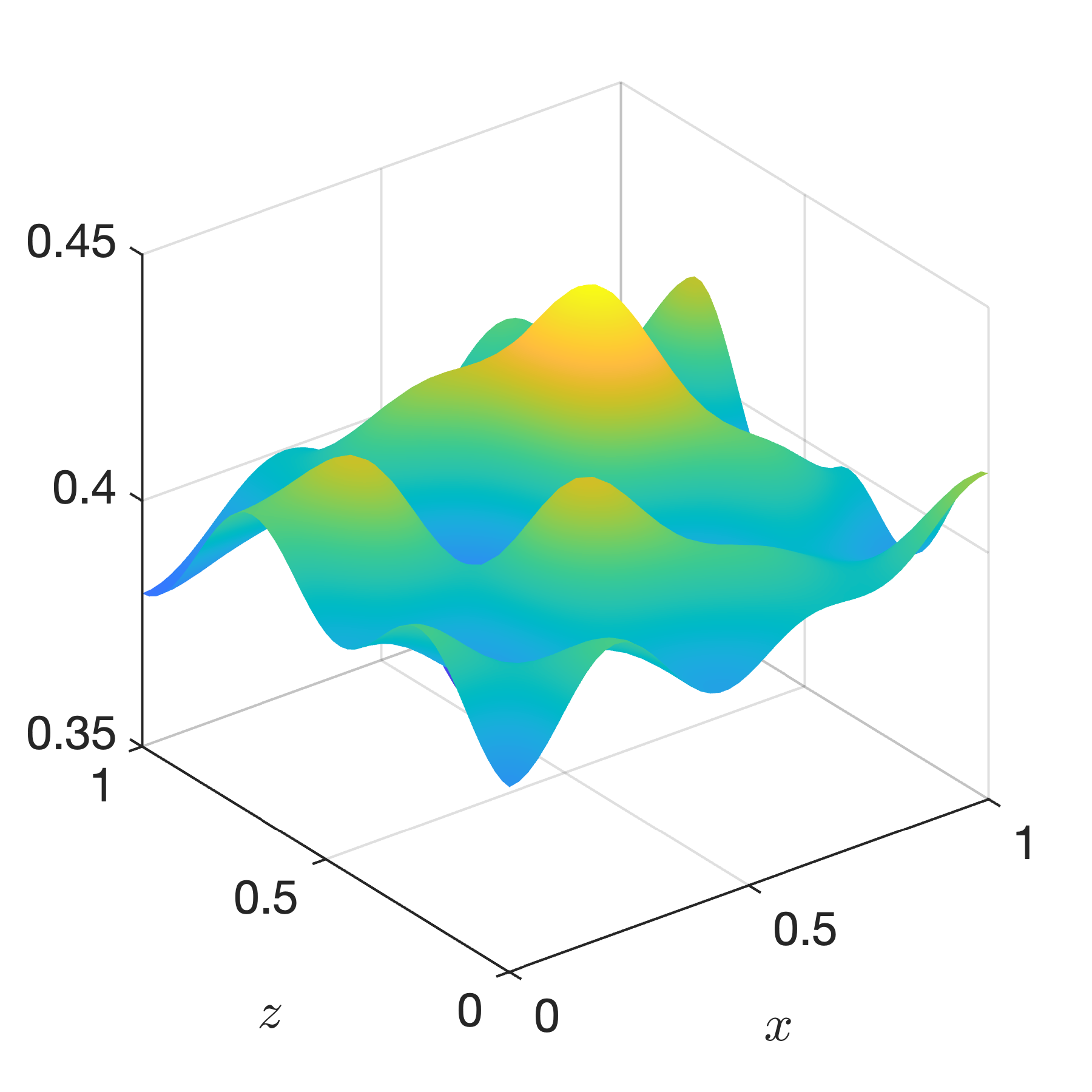}
\includegraphics[width=0.22\textwidth,trim=0.2cm 0.1cm 0.3cm 0.2cm,clip]{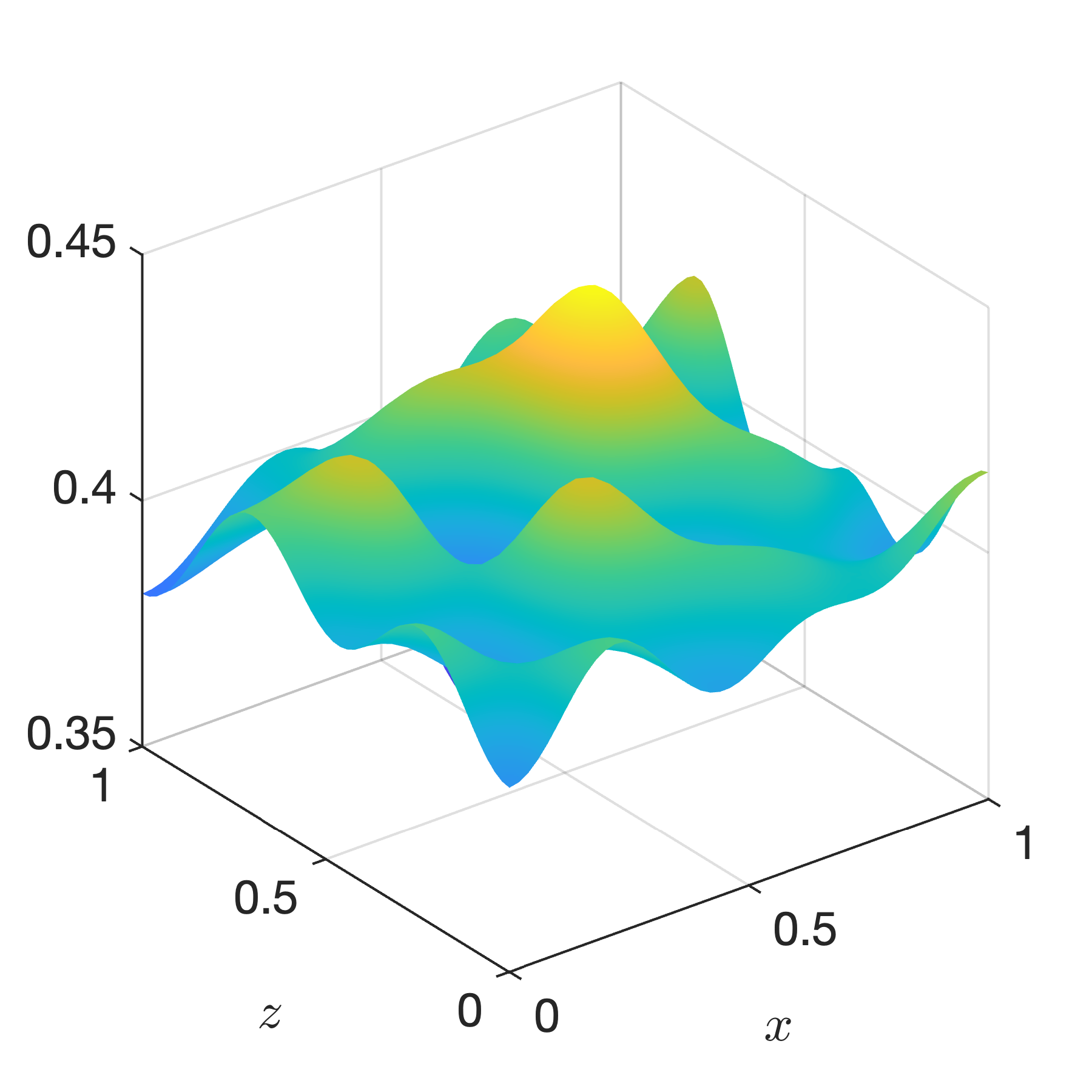}
\includegraphics[width=0.22\textwidth,trim=0.3cm 0.1cm 0.3cm 0.2cm,clip]{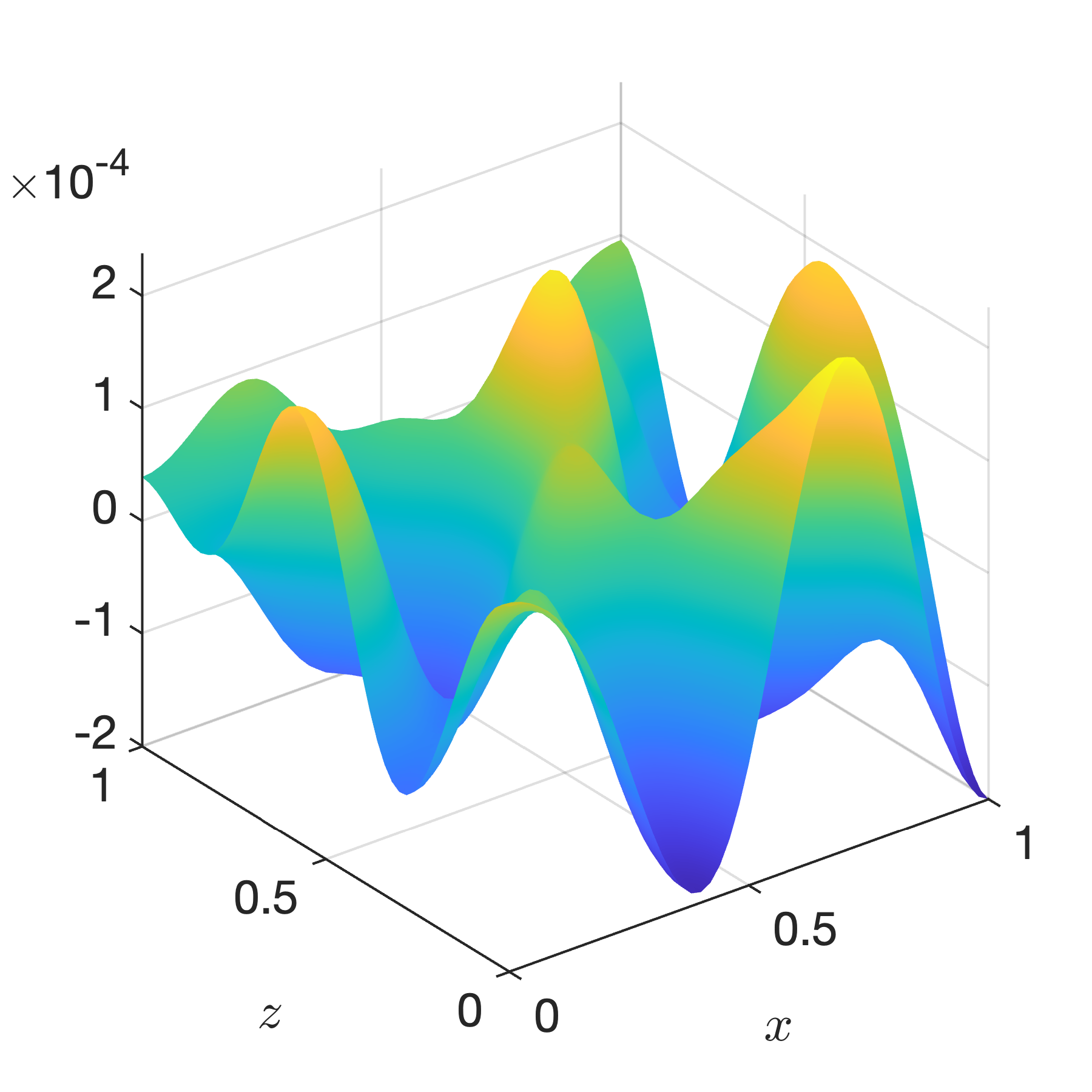}
\includegraphics[width=0.22\textwidth,trim=0.3cm 0.1cm 0.3cm 0.2cm,clip]{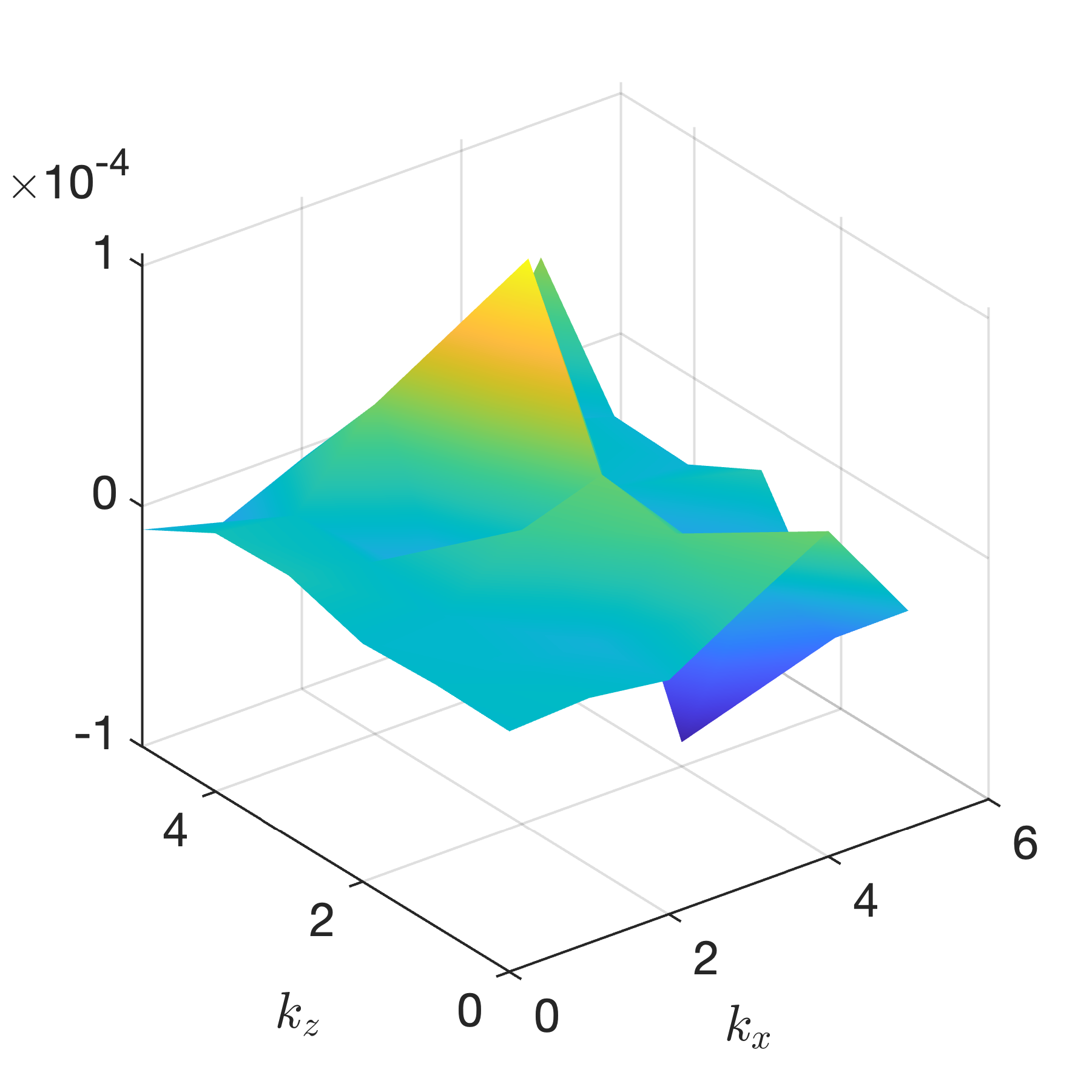}\\
 \includegraphics[width=0.22\textwidth,trim=0.2cm 0.1cm 0.3cm 0.2cm,clip]{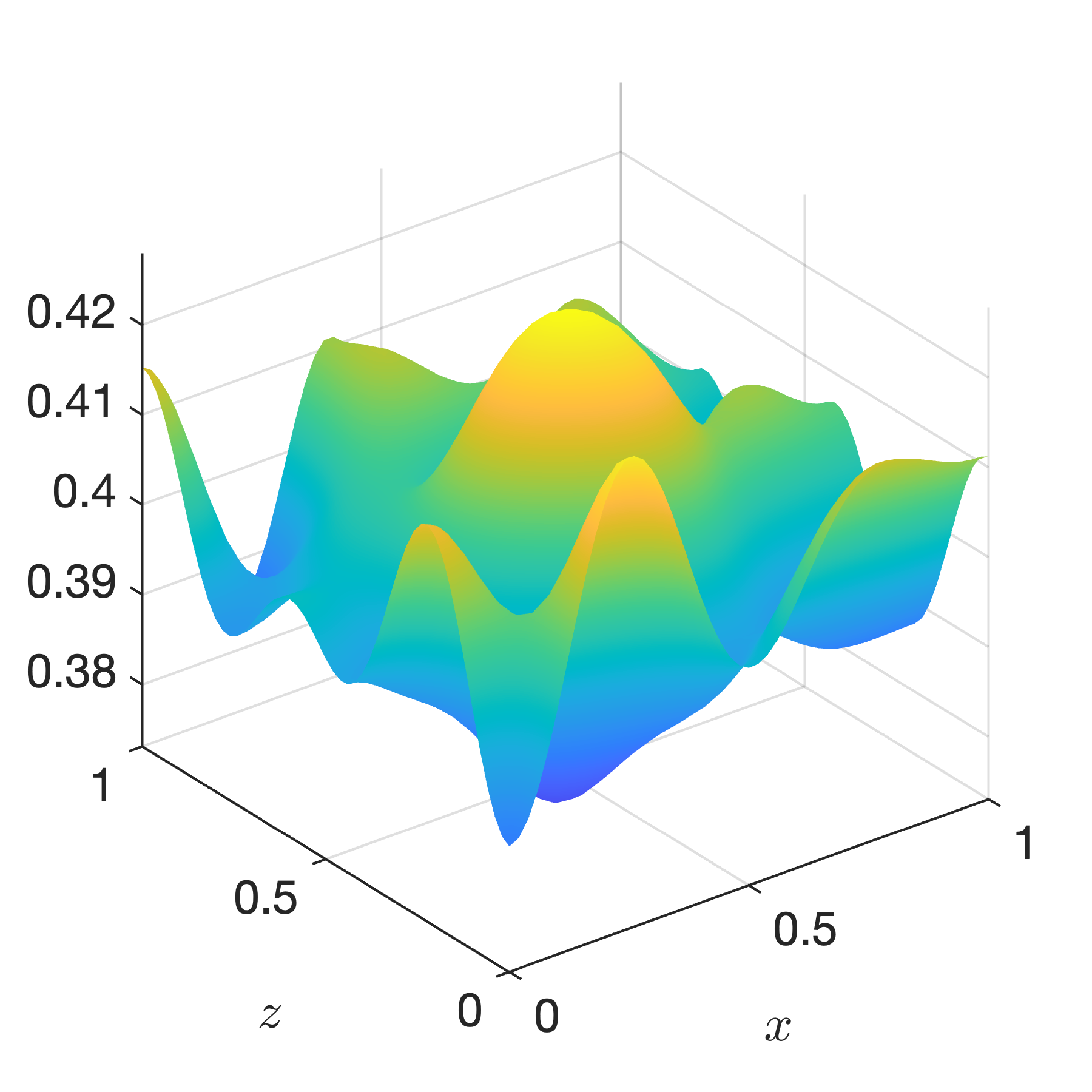}
\includegraphics[width=0.22\textwidth,trim=0.2cm 0.1cm 0.3cm 0.2cm,clip]{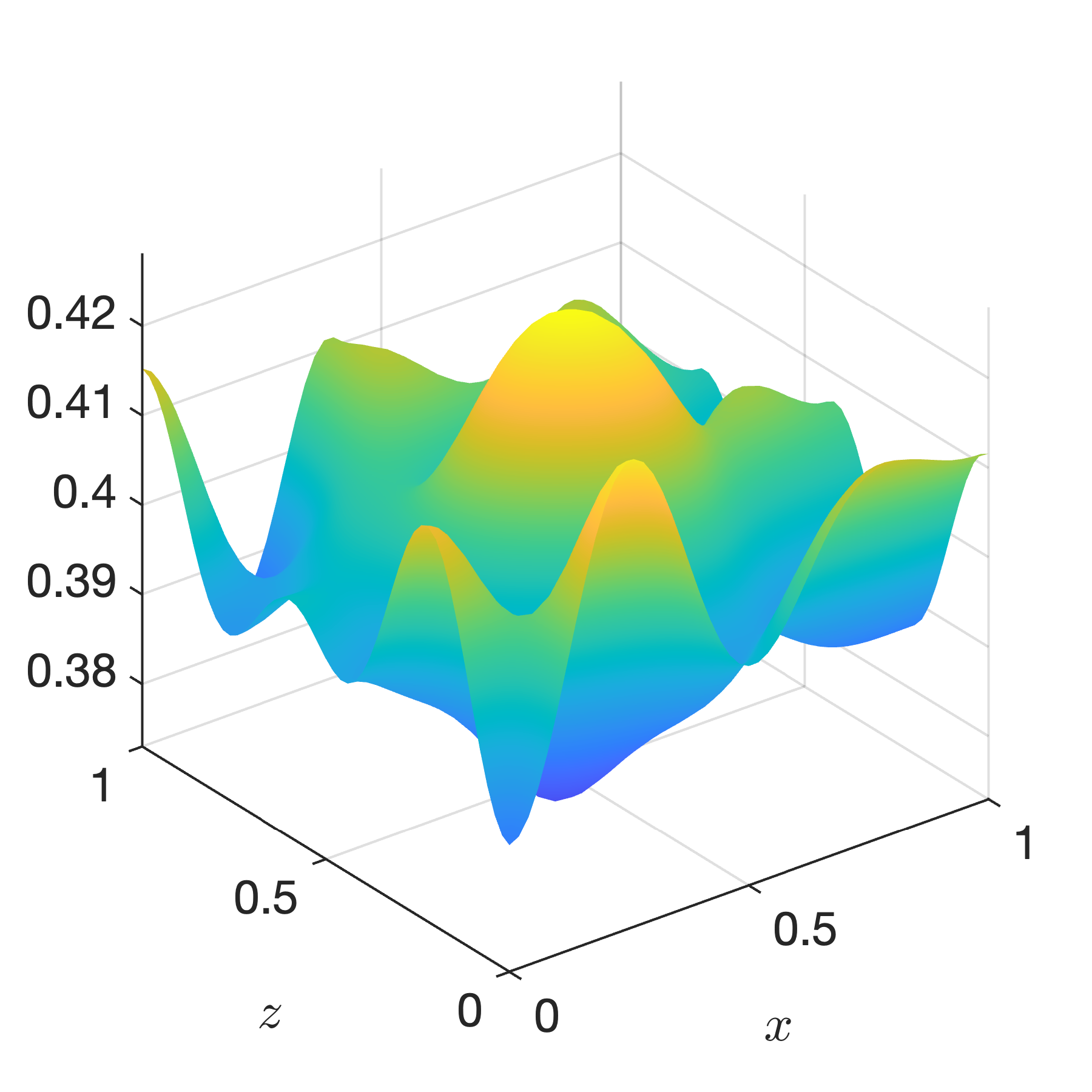}
\includegraphics[width=0.22\textwidth,trim=0.3cm 0.1cm 0.3cm 0.2cm,clip]{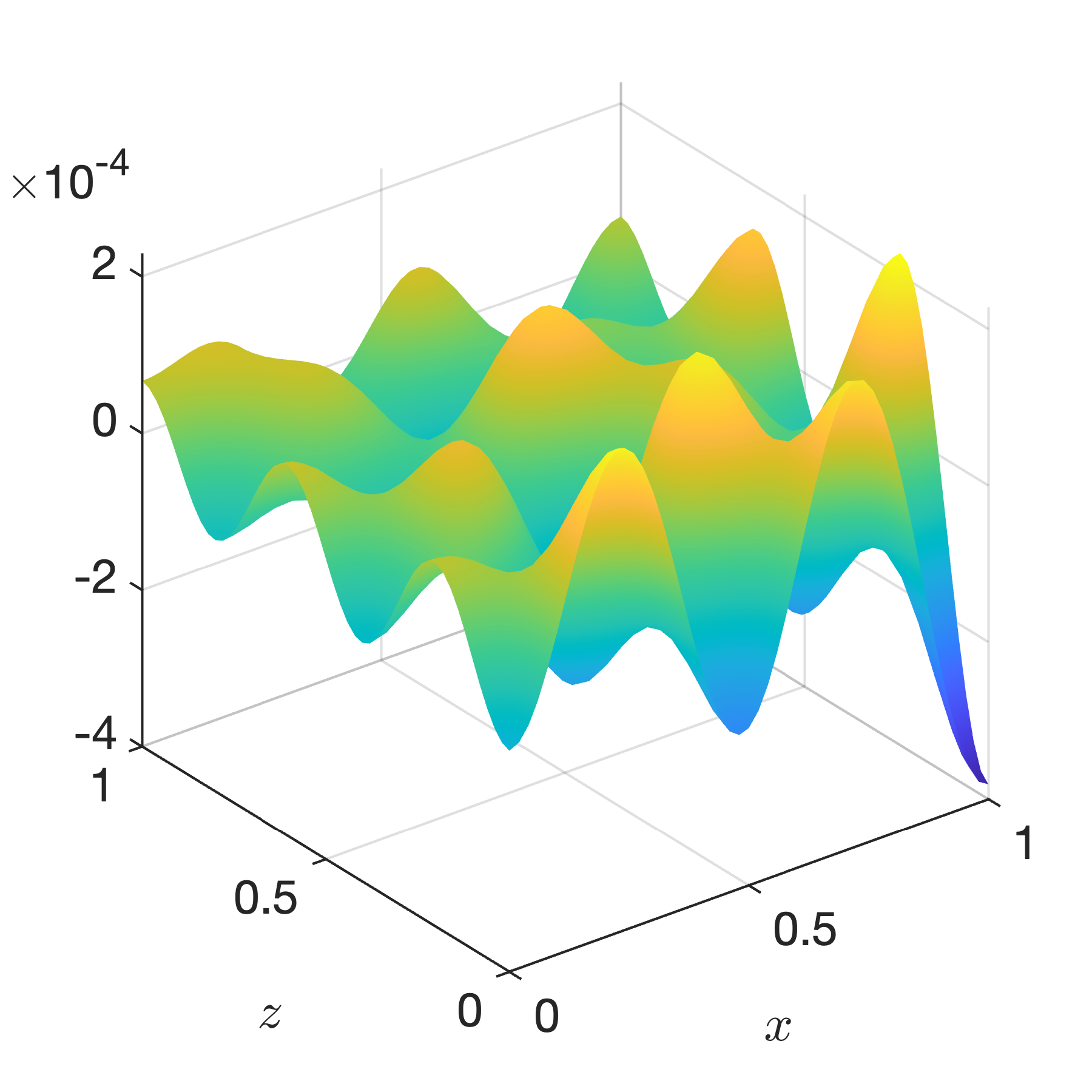}
\includegraphics[width=0.22\textwidth,trim=0.3cm 0.1cm 0.3cm 0.2cm,clip]{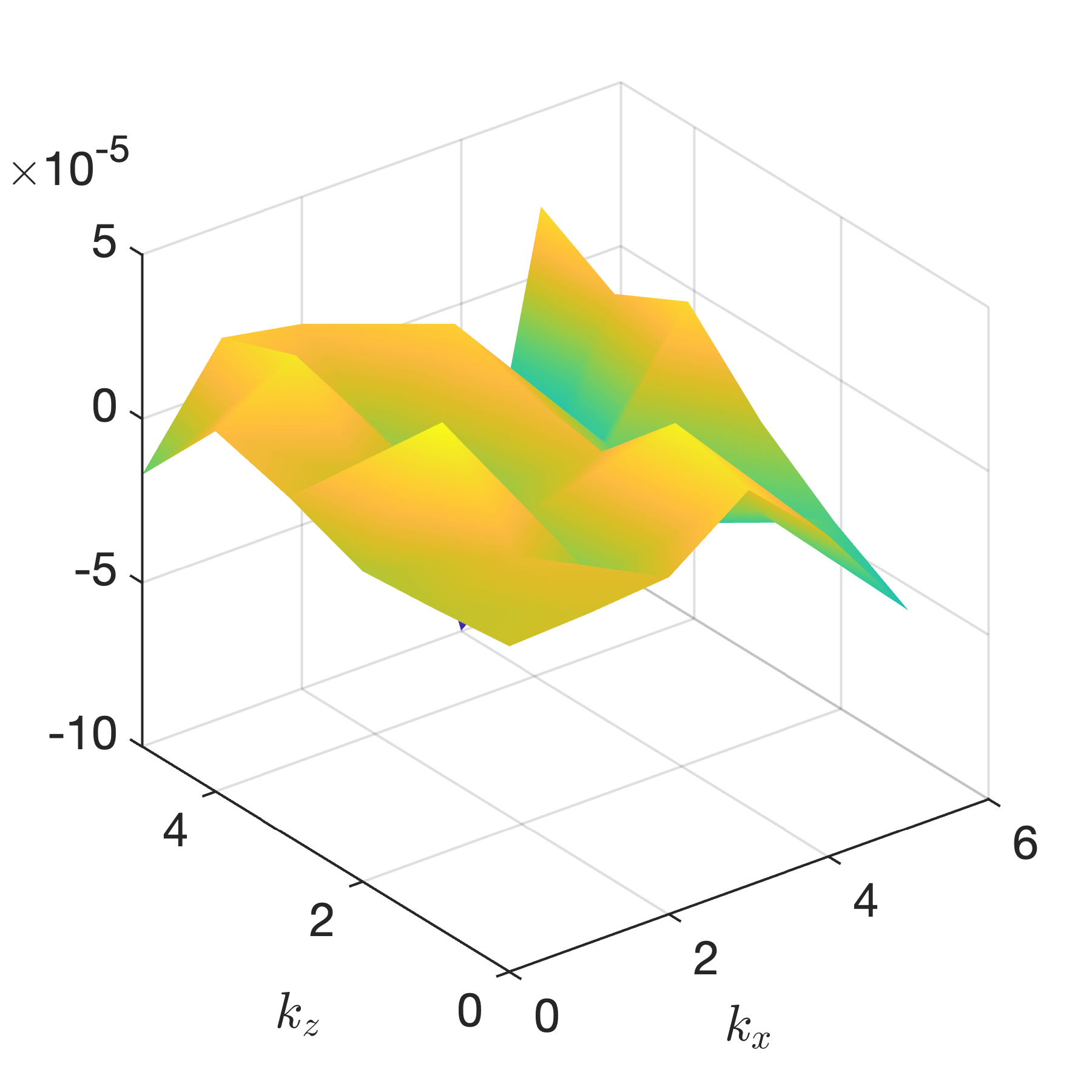}\\
\includegraphics[width=0.22\textwidth,trim=0.2cm 0.1cm 0.3cm 0.2cm,clip]{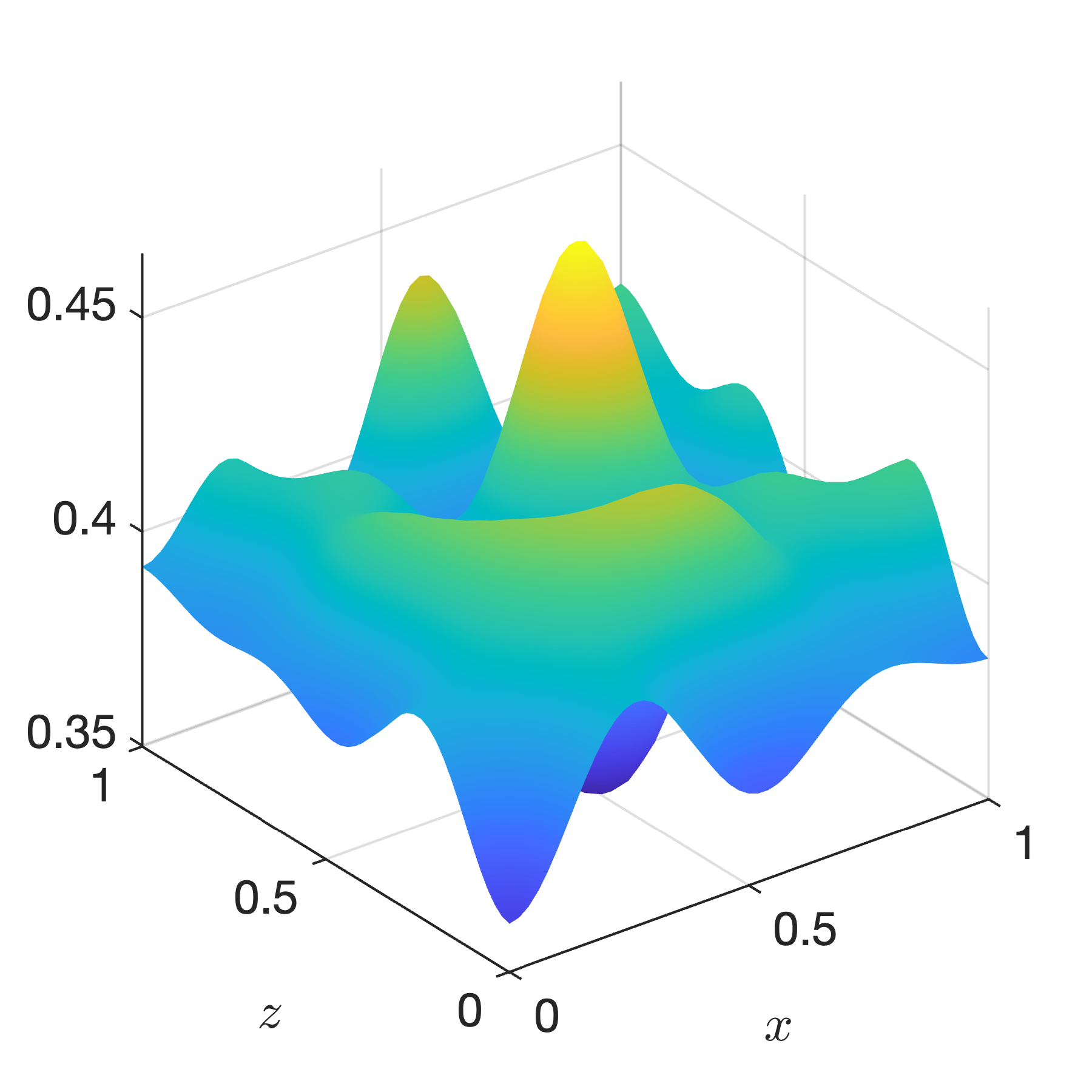}
\includegraphics[width=0.22\textwidth,trim=0.2cm 0.1cm 0.3cm 0.2cm,clip]{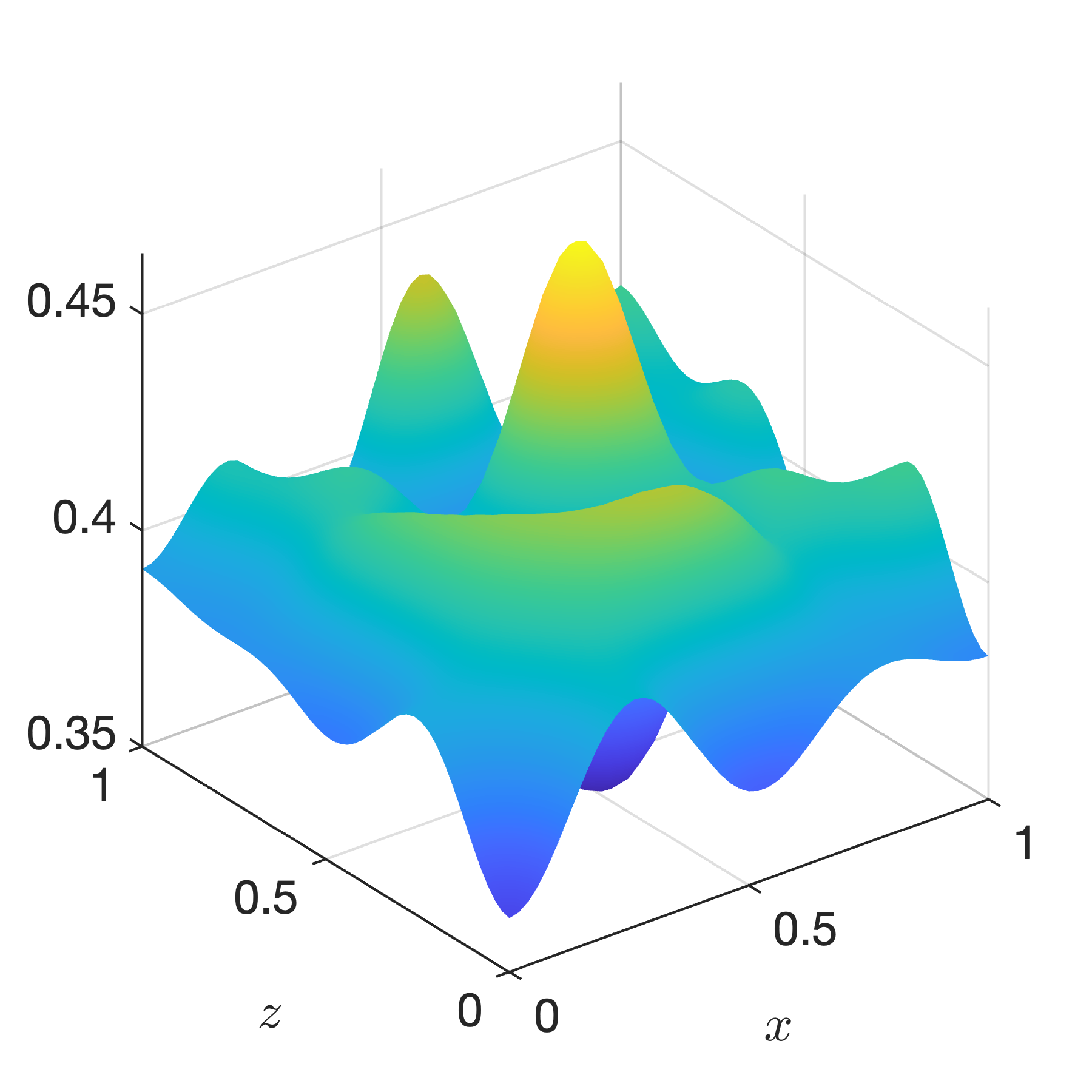}
\includegraphics[width=0.22\textwidth,trim=0.3cm 0.1cm 0.3cm 0.2cm,clip]{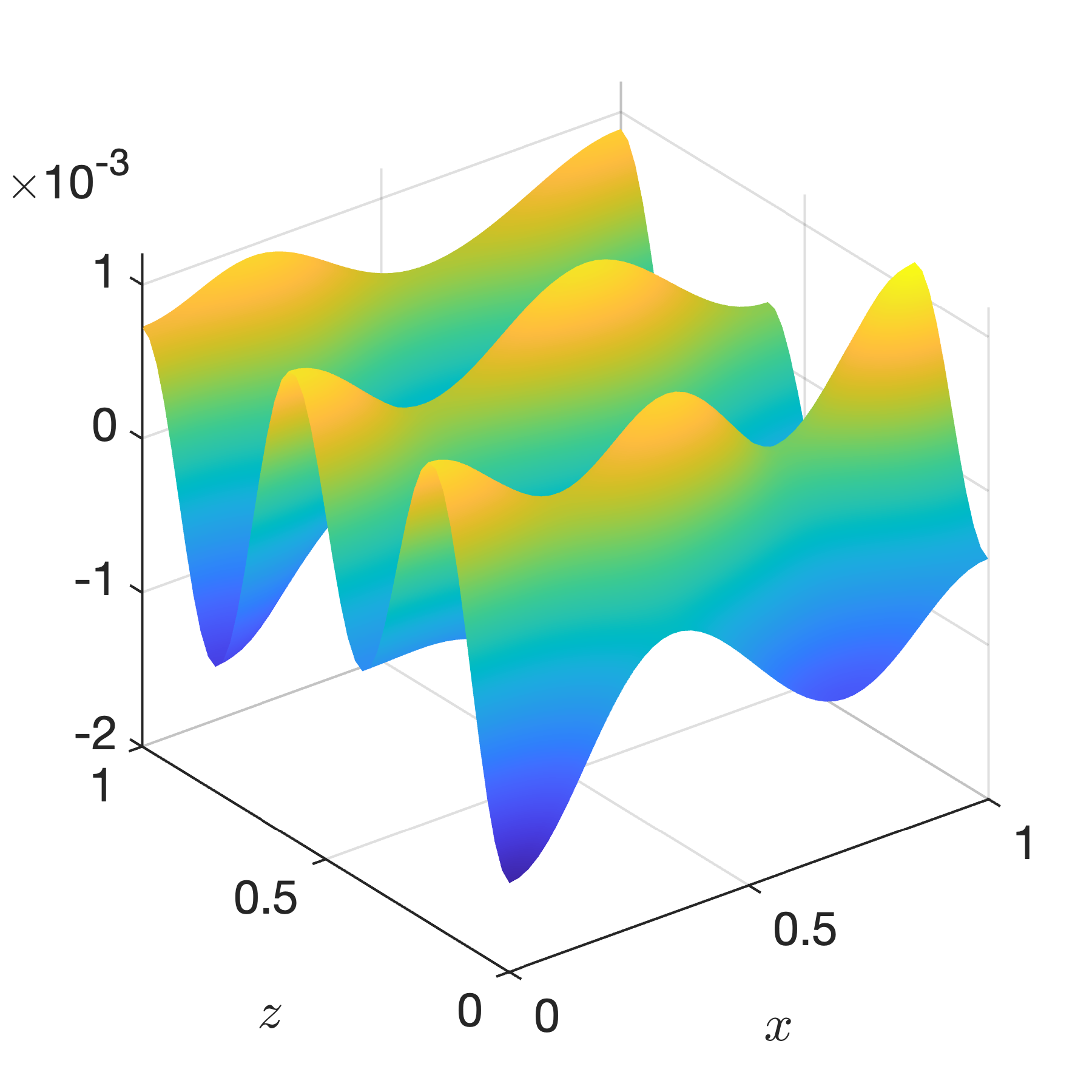}
\includegraphics[width=0.22\textwidth,trim=0.3cm 0.1cm 0.3cm 0.2cm,clip]{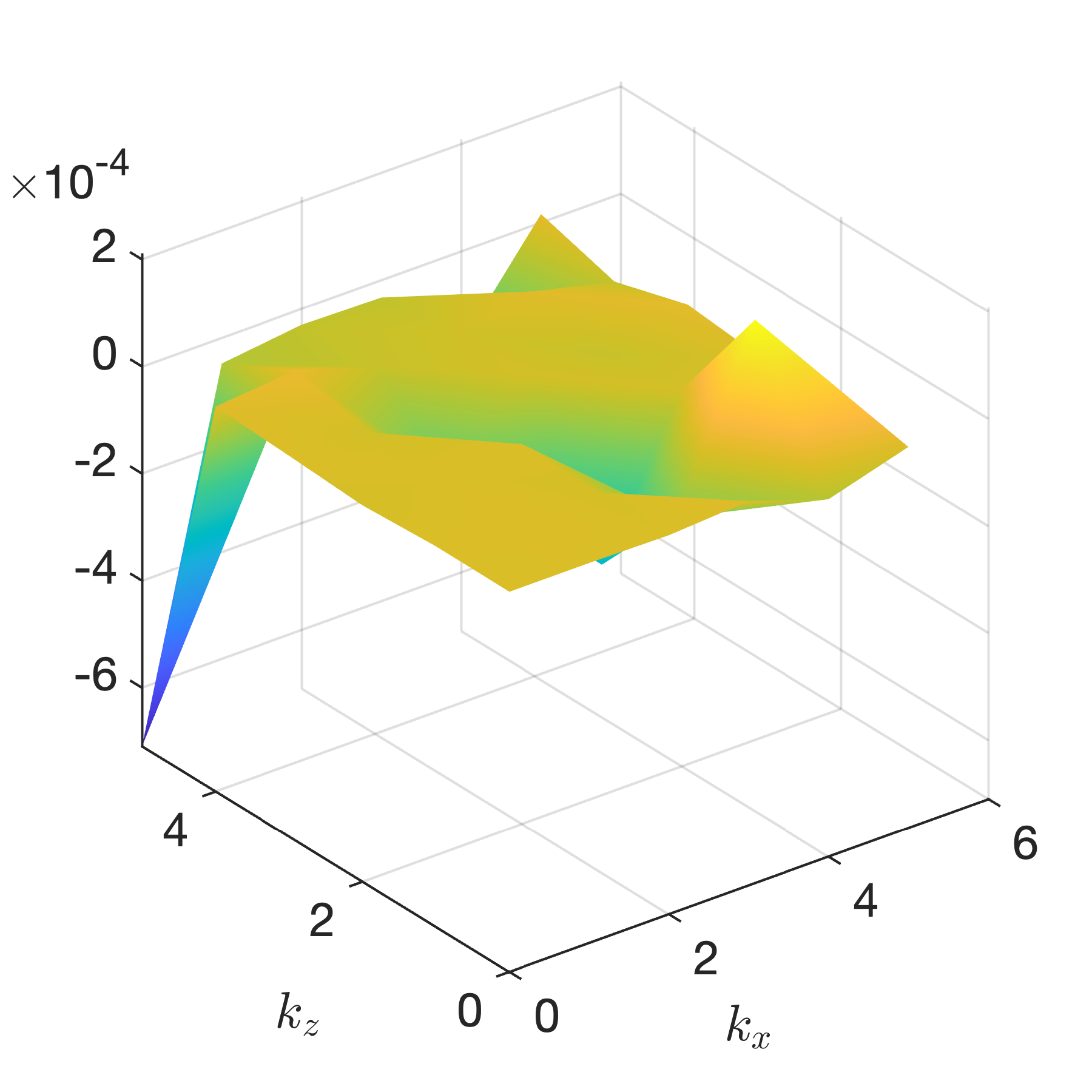}
\caption{Three randomly selected absorption fields ($\sigma$) from the testing dataset: $6 \times 6$-coefficient Fourier model in Numerical Experiment II. From left to right are the exact absorption field (column $1$), the predicted absorption field by the trained neural network (column $2$), the error of the prediction in the physical space (column $3$), and the error in the network prediction in the Fourier domain (column $4$). 
}
	\label{FIG:absorption_nn_samples}
\end{figure}
In Figure~\ref{FIG:absorption_nn_samples}, we show three randomly selected absorption fields $\sigma$ from the testing dataset, which are generated by the diffusion fields $\gamma$ based on~\eqref{EQ:Relation 1}, the corresponding neural network predictions, and the errors in the prediction. We also present the training-validation loss curves in Figure~\ref{FIG:loss_validation_curve_fourier}. To save the presentation space, we only show the training-validation loss curves for the Fourier modes $\wh\sigma_{(0,0)}$, $\wh\sigma_{(3,3)}$, $\wh\sigma_{(5,5)}$, $\wh\sigma_{(1,2)}$, $\wh\sigma_{(3,4)}$, and $\wh\sigma_{(5,3)}$ but we observe very similar curves for other Fourier modes. A quick takeaway from Figure~\ref{FIG:absorption_nn_samples} and Figure~\ref{FIG:loss_validation_curve_fourier} is that the training process is quite successful as the testing errors are pretty reasonable, especially given that our training dataset is very small, with only $0.8 \times 10^4$ data points. \RED{Our interpretation is that the close alignment of the training and validation curves suggests that the model generalizes well to the validation set and indicates that there is no significant overfitting. The large fluctuations observed in both curves are likely due to optimization dynamics. In this work, our goal is to learn an approximate relationship between the parameters rather than an exact one. While it is possible to reduce training and validation errors, as well as potentially mitigate the observed oscillations, by using more advanced optimization algorithms and fine-tuning hyperparameters, we did not pursue this here because our focus remains on capturing a reasonable approximation of the underlying relationship.}

\begin{figure}[htb!]
	\centering
 \includegraphics[width=0.325\textwidth,trim=0cm 0cm 0cm 0cm,clip]{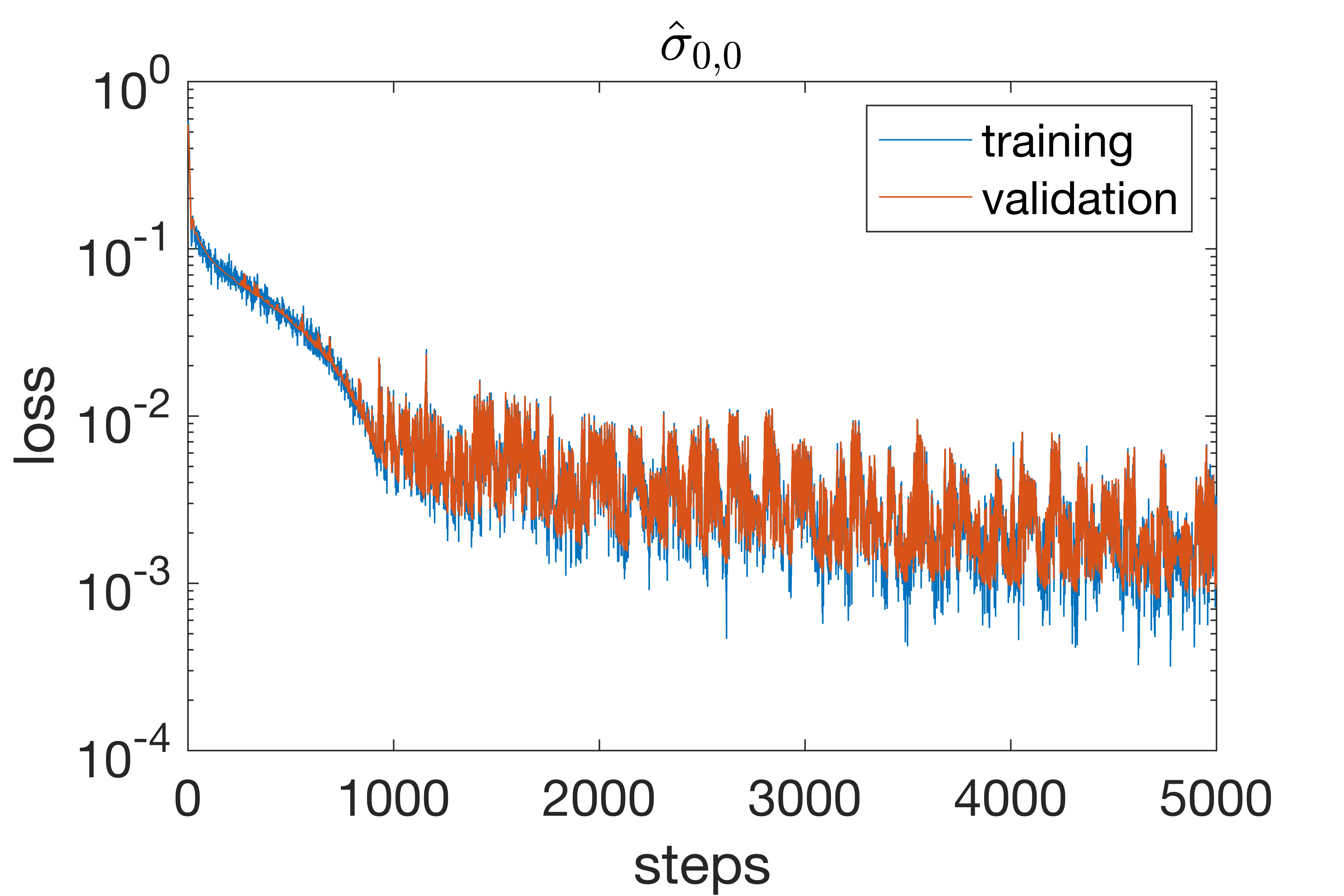}
 \includegraphics[width=0.325\textwidth,trim=0cm 0cm 0cm 0cm,clip]{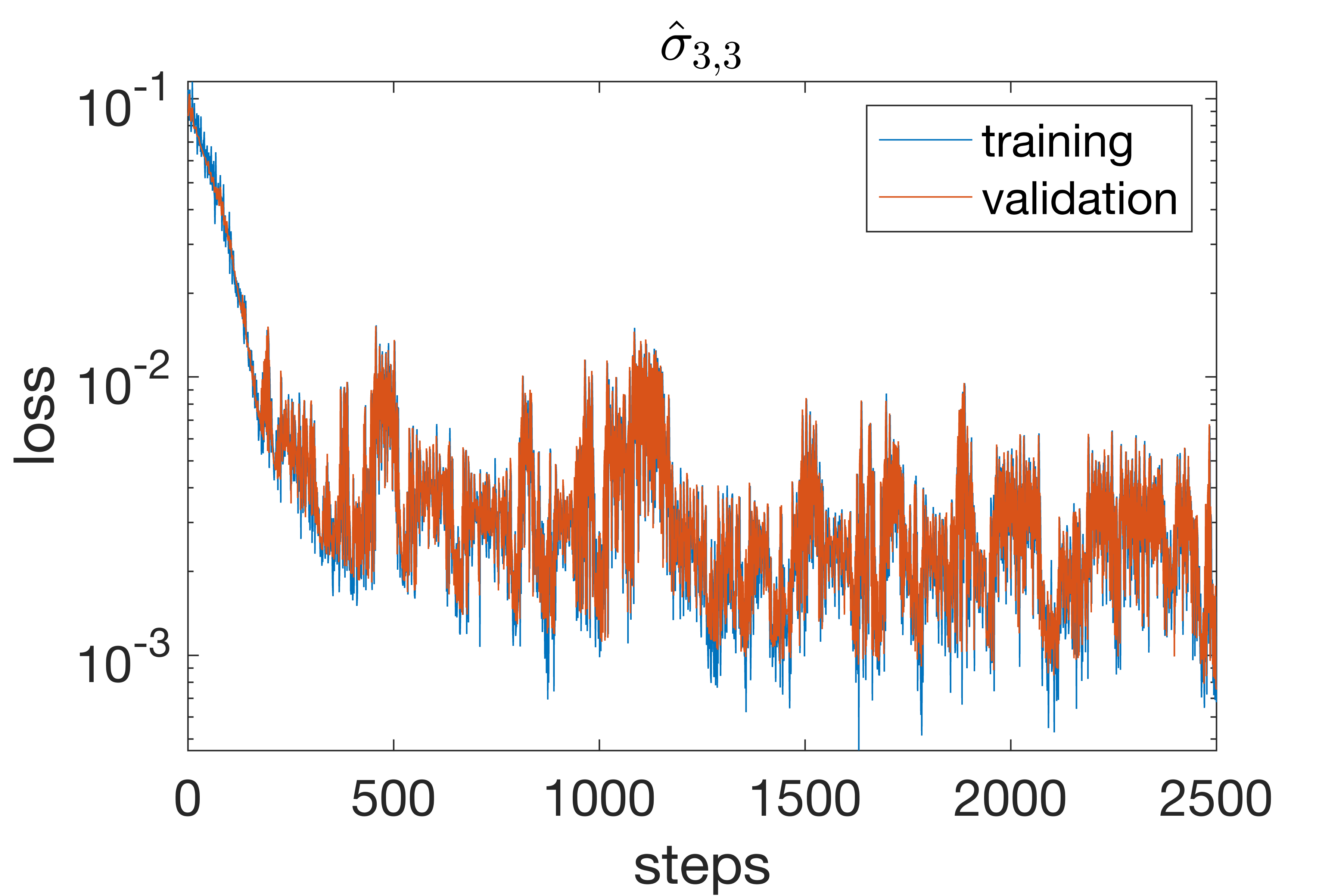}
 \includegraphics[width=0.325\textwidth,trim=0cm 0cm 0cm 0cm,clip]{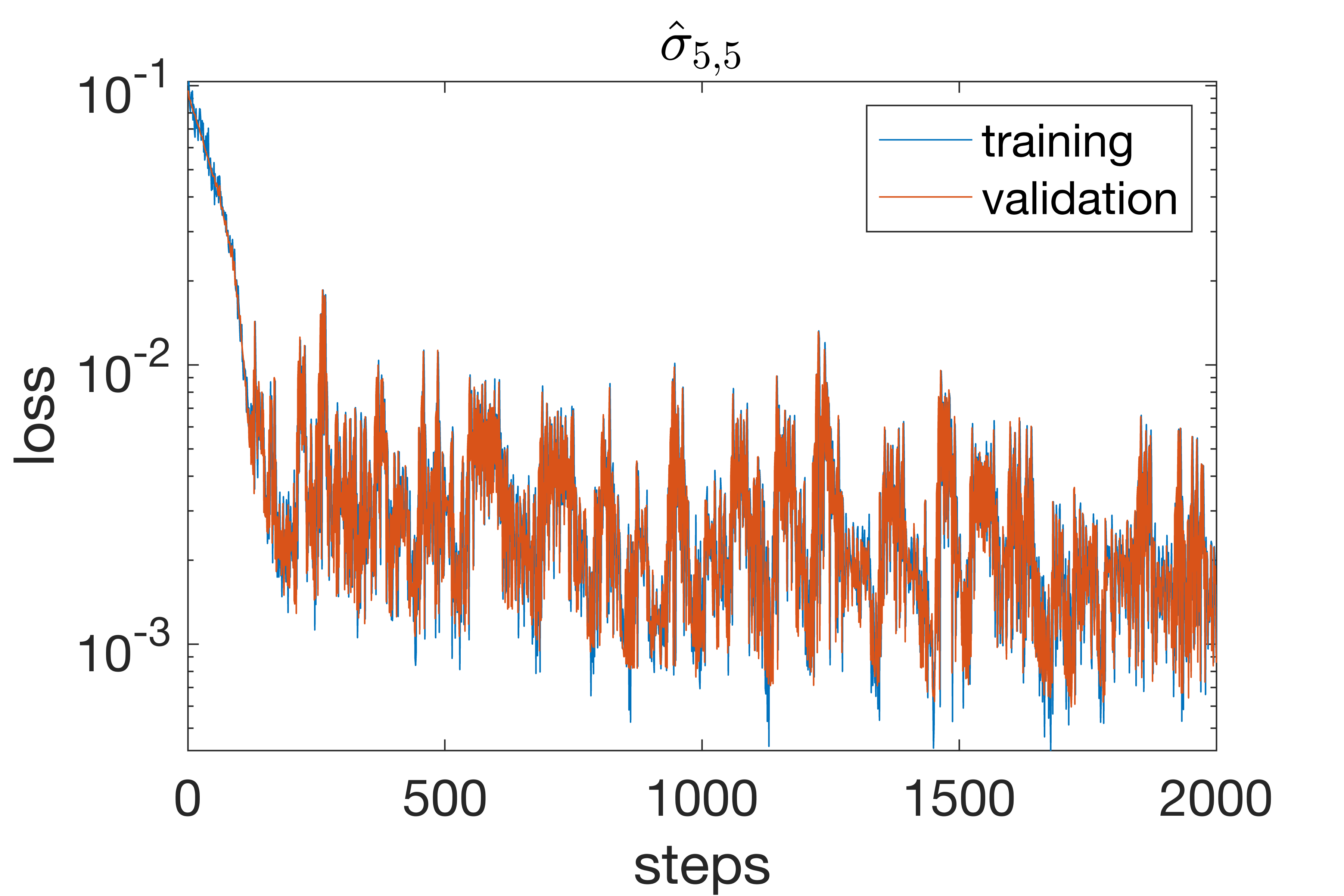}\\
 \includegraphics[width=0.325\textwidth,trim=0cm 0cm 0cm 0cm,clip]{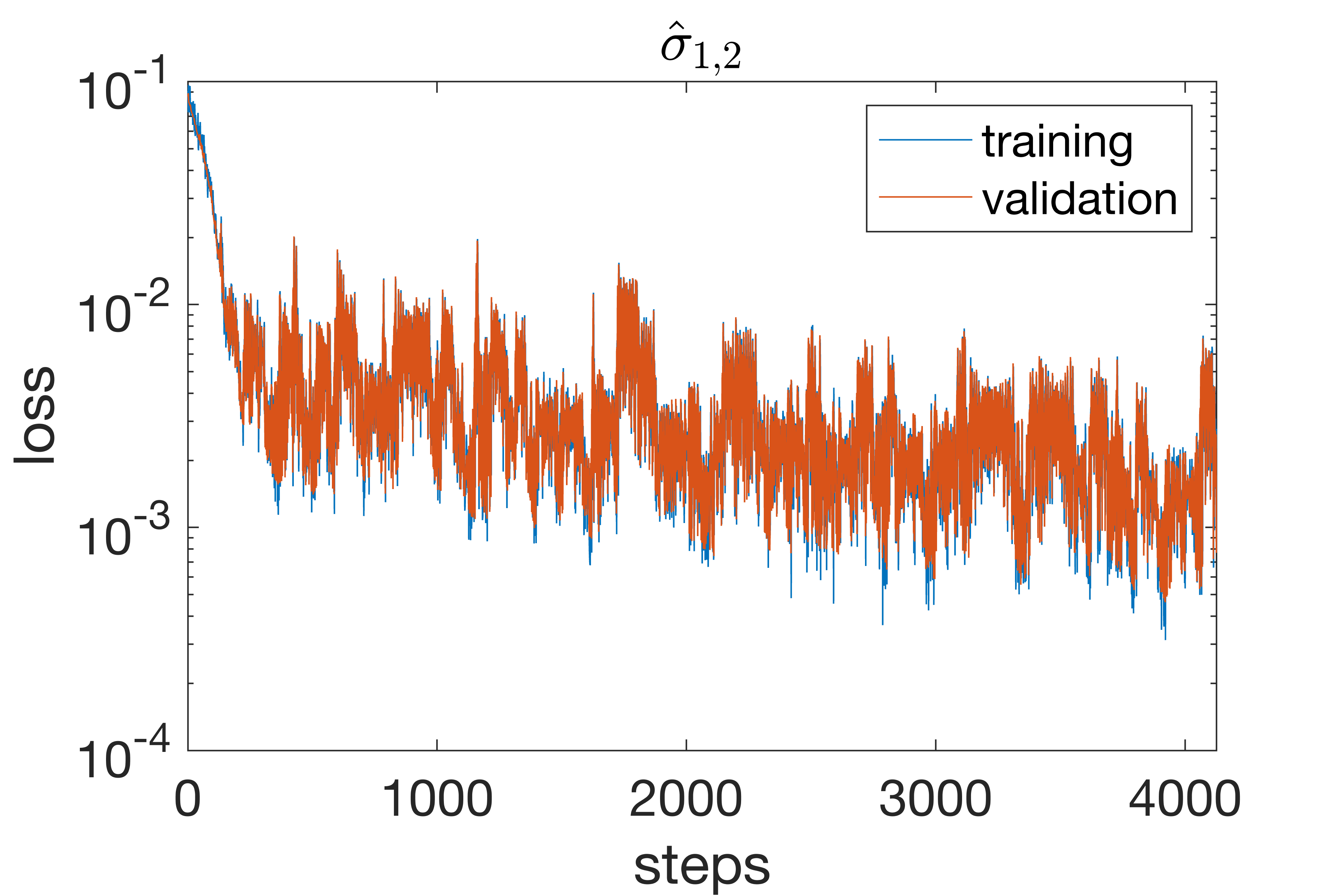}
 \includegraphics[width=0.325\textwidth,trim=0cm 0cm 0cm 0cm,clip]{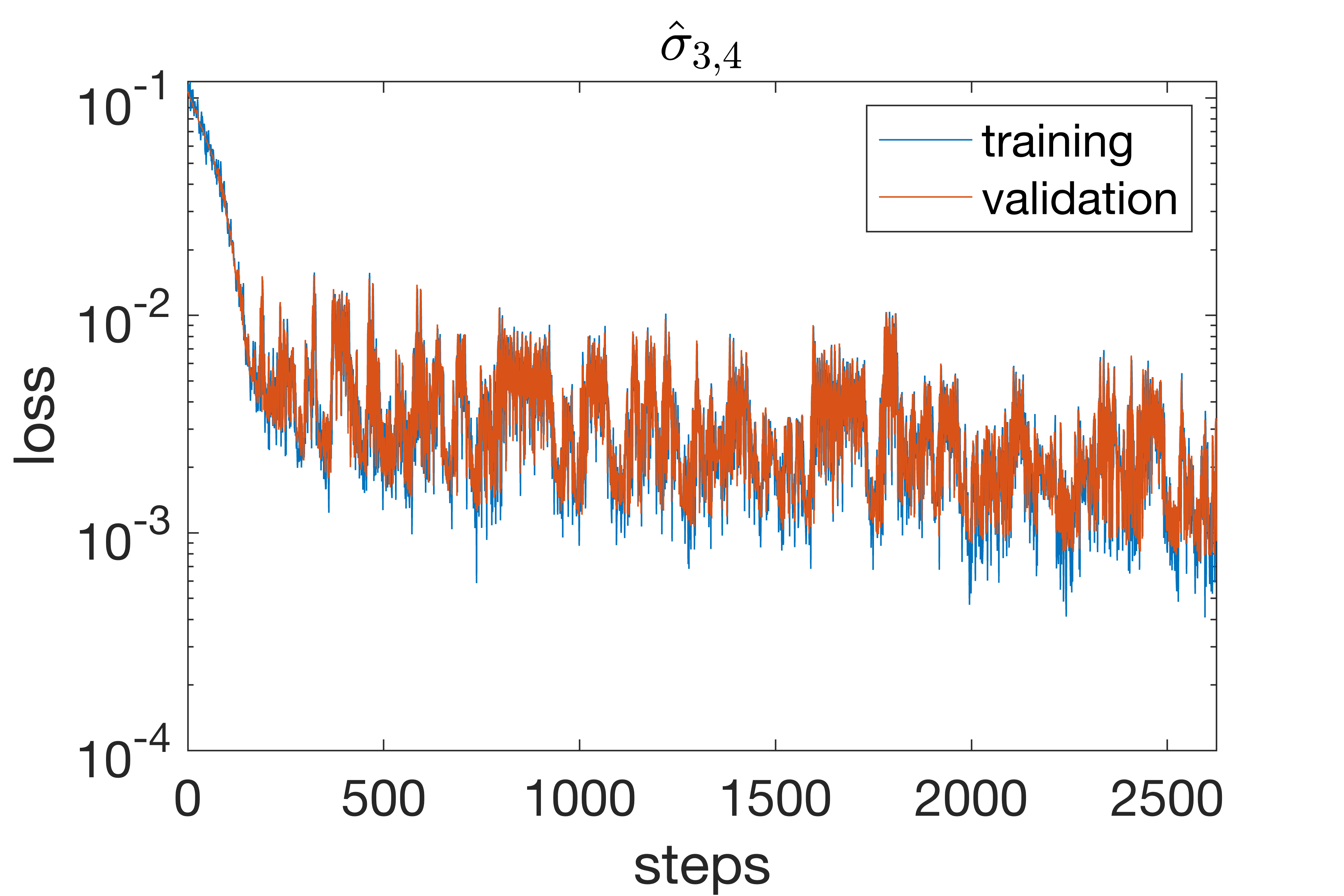}
 \includegraphics[width=0.325\textwidth,trim=0cm 0cm 0cm 0cm,clip]{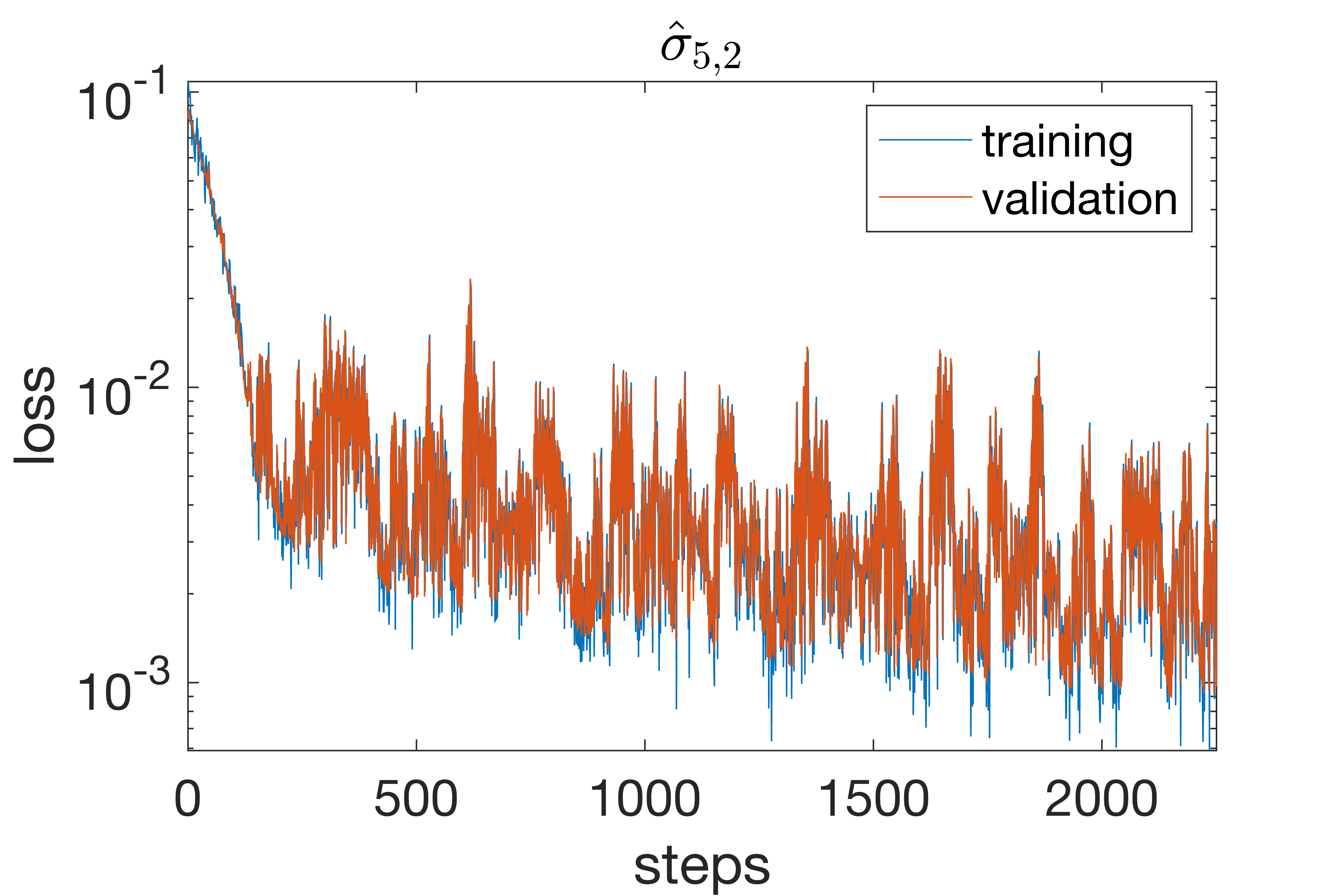}
    \caption{From top left to bottom right: the training (blue) and validation (brown) loss curves for the Fourier modes $\wh\sigma_{(0,0)}$, $\wh\sigma_{(3,3)}$, $\wh\sigma_{(5,5)}$, $\wh\sigma_{(1,2)}$, $\wh\sigma_{(3,4)}$, and $\wh\sigma_{(5,2)}$ in Numerical Experiment II.}
    \label{FIG:loss_validation_curve_fourier}
\end{figure}

\noindent\textbf{Data-driven joint inversion.} We now perform joint reconstructions of $(\gamma, \sigma)$ based on the learned relation from internal data~\eqref{EQ:Diff Data} (see Figure~\ref{FIG:usigma_data} for the datum $H$ generated with the boundary source~\eqref{source_top_diffusion_example} for this example). The algorithmic parameters are the same as in the previous example.
\begin{figure}[htb!]
	\centering	
 \includegraphics[width=0.30\textwidth,trim=1cm 1.5cm 0.5cm 1.5cm,clip]{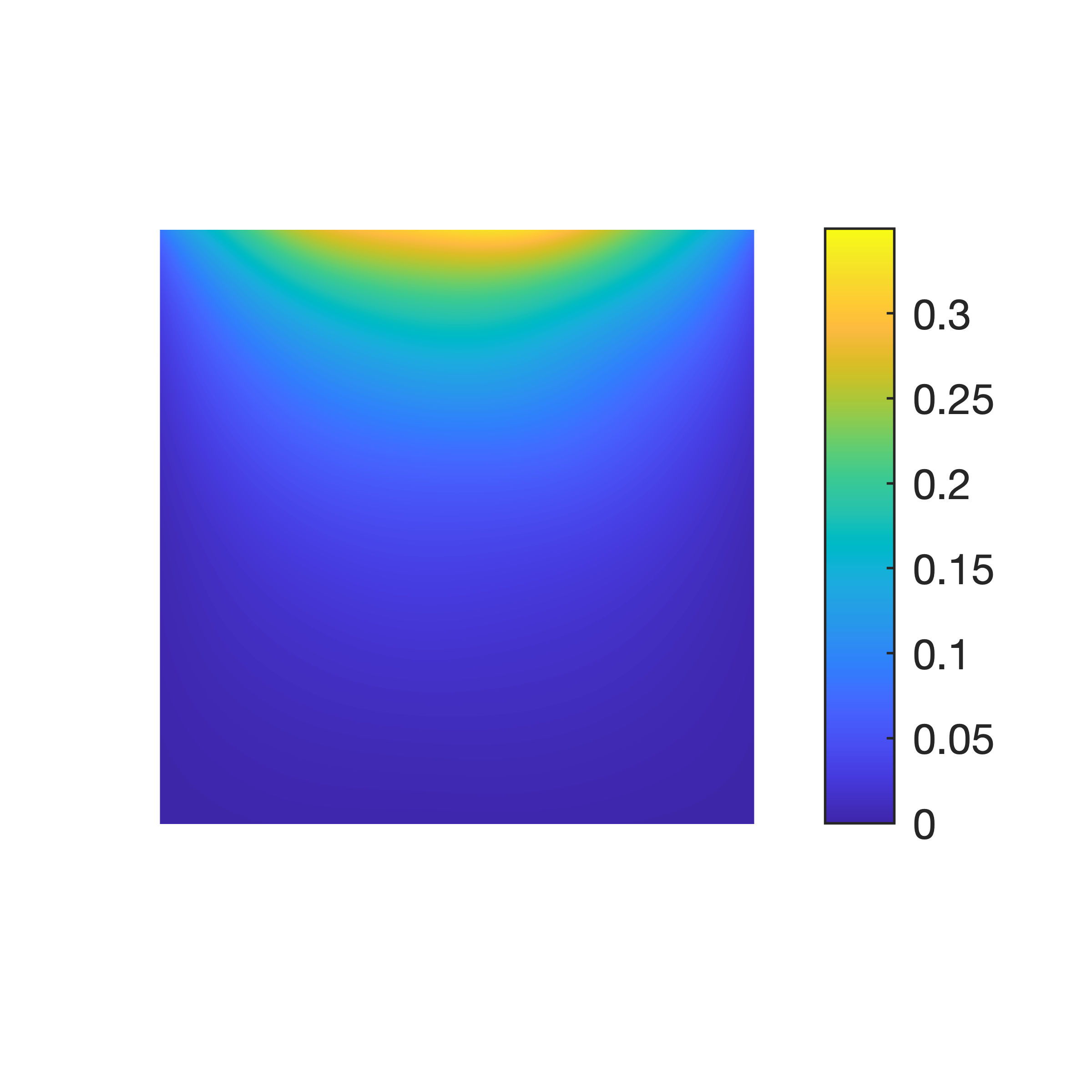}
 \includegraphics[width=0.30\textwidth,trim=1cm 1.5cm 0.5cm 1.5cm,clip]{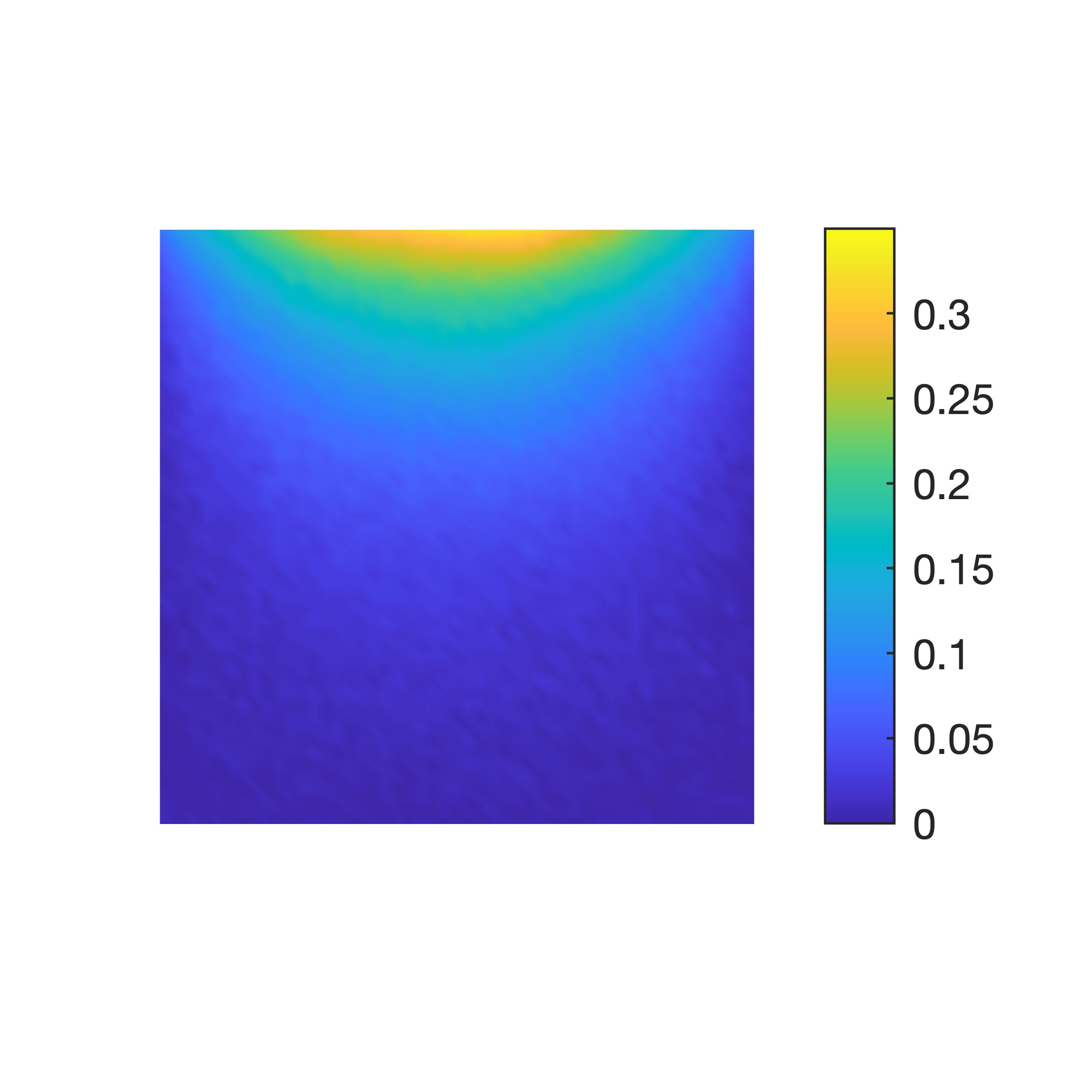}
 \includegraphics[width=0.3\textwidth,trim=1cm 1.5cm 0.5cm 1.5cm,clip]{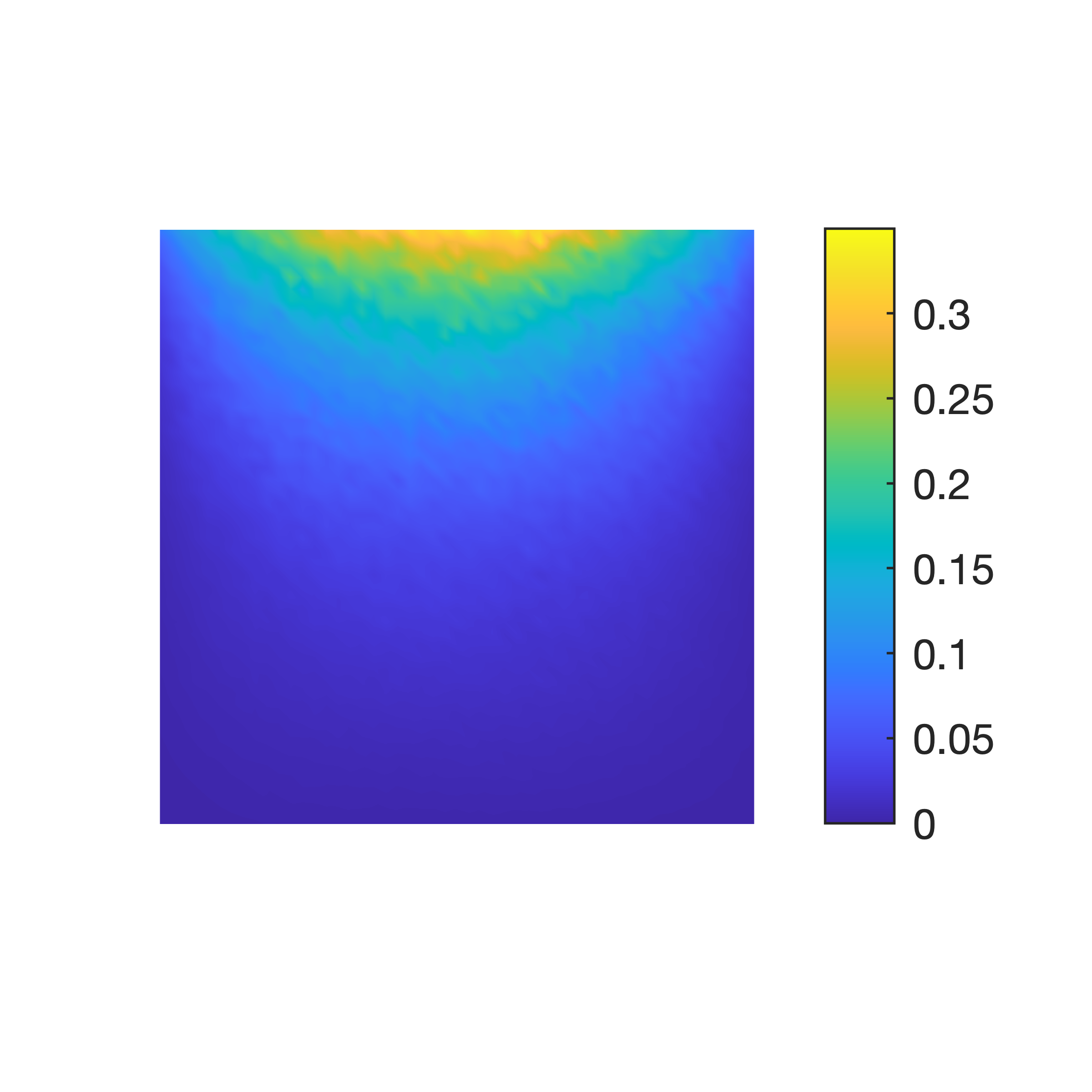}
	\caption{Internal datum $H$ generated with illumination~\eqref{source_top_diffusion_example}. From left to right: noise-free datum, datum with $5\%$ additive Gaussian noise, and datum with $5\%$ multiplication Gaussian noise.}
	\label{FIG:usigma_data}
\end{figure}

\RED{In Figure~\ref{FIG:reconstructed_diffusion_absorption}, we show the surface plots of the exact and the reconstructed $\gamma$ and $\sigma$  fields from the proposed data-driven joint inversion scheme. Specifically, we display the diffusion fields in the top panel, and the absorption fields are plotted in the bottom panel. From the left to the right are the surface plots of the true coefficients  (first column); the reconstructed diffusion/absorption fields with noise-free internal data (second column), with the internal data containing $5\%$ additive Gaussian noise (third column), and with the internal data including $5\%$ multiplication Gaussian noise (fourth column). We observe that the accuracy of the reconstruction of the proposed joint inversion scheme is pretty well. Even with the noisy internal data, we can recover the main features of the diffusion coefficient $\gamma$ and the absorption field $\sigma$.}
\begin{figure}[htb!]
	\centering
 \includegraphics[width=0.22\textwidth,trim=1cm 1.5cm 0.5cm 1.5cm,clip]{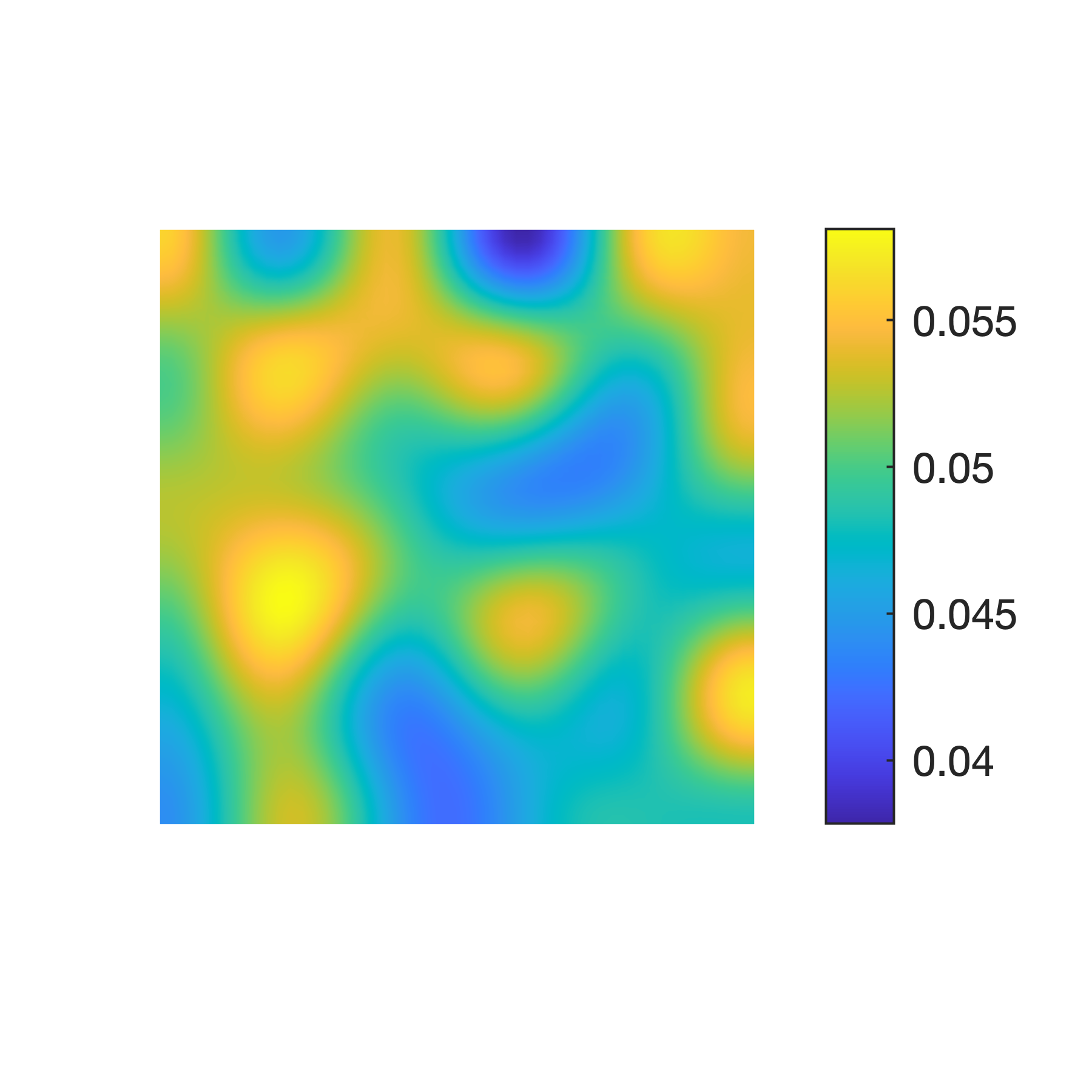}
 \includegraphics[width=0.22\textwidth,trim=1cm 1.5cm 0.5cm 1.5cm,clip]{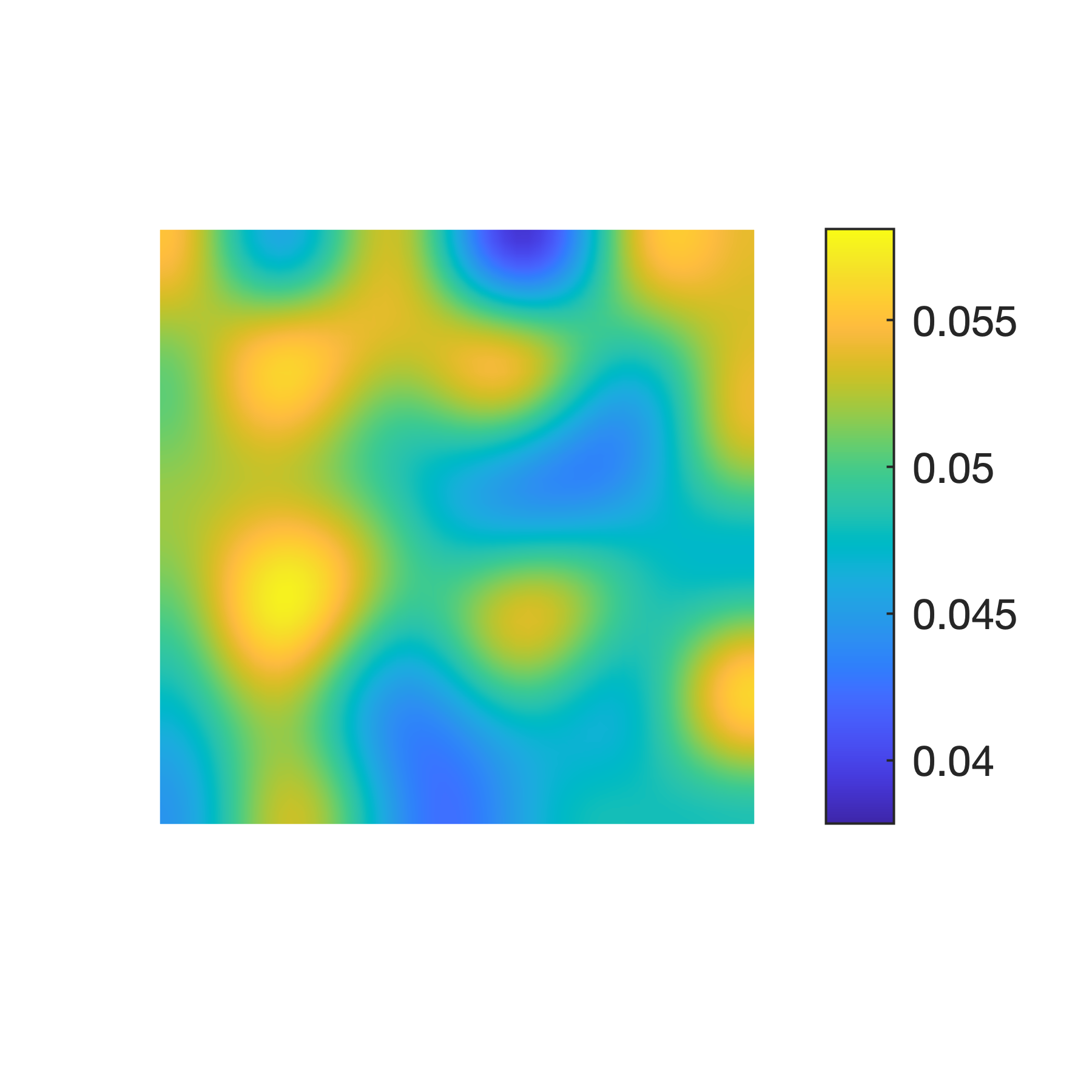}
 \includegraphics[width=0.22\textwidth,trim=1cm 1.5cm 0.5cm 1.5cm,clip]{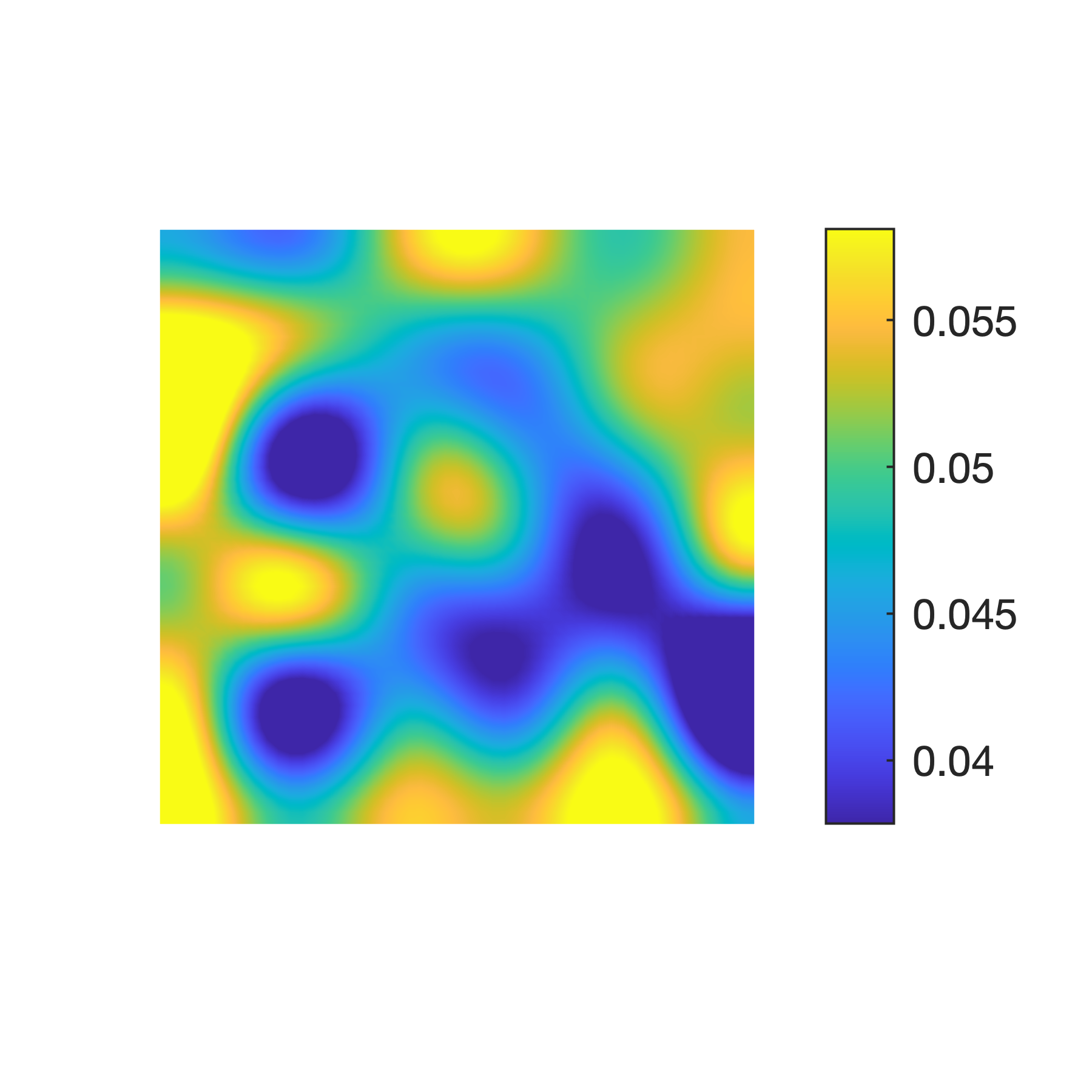}
 \includegraphics[width=0.22\textwidth,trim=1cm 1.5cm 0.5cm 1.5cm,clip]{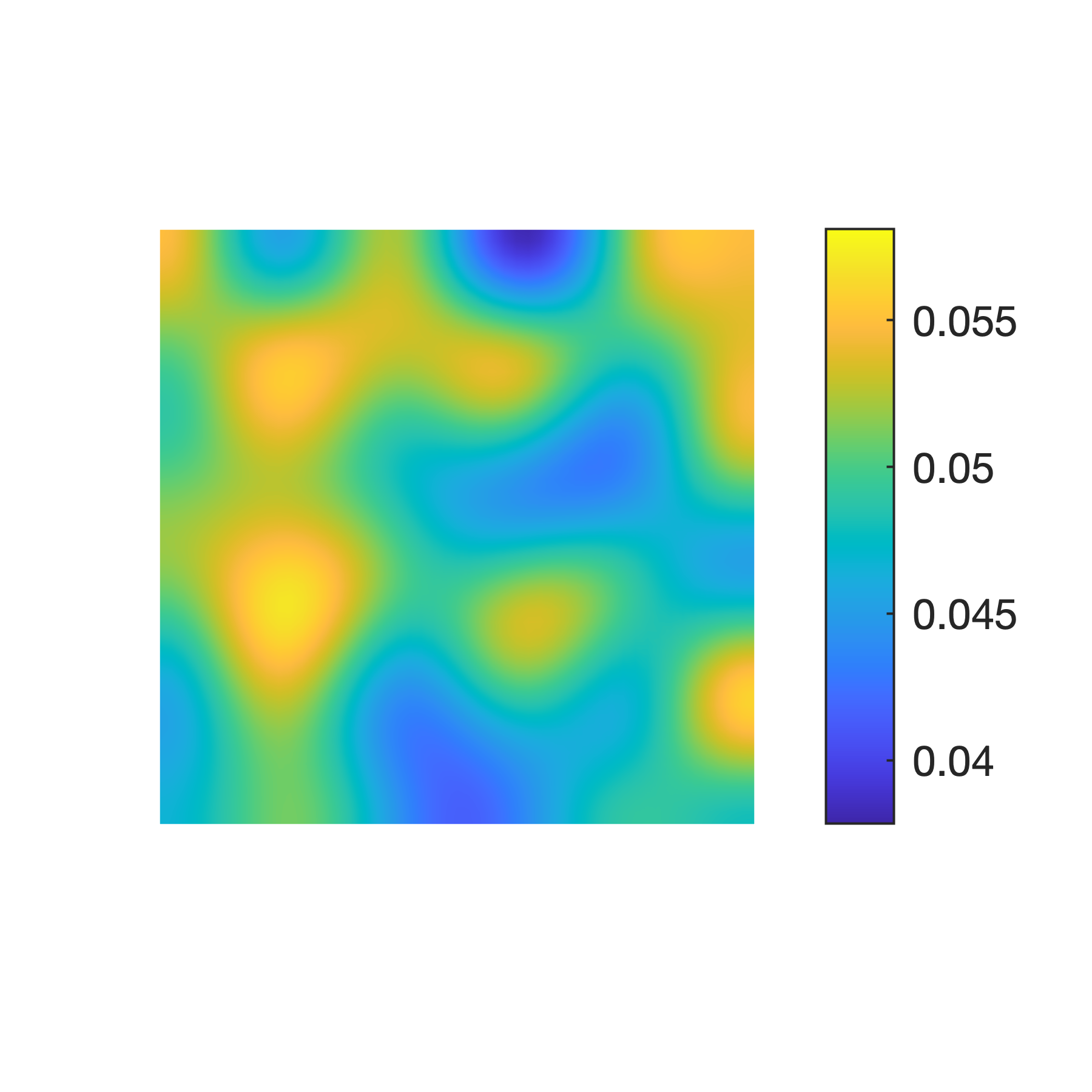}\\
 \includegraphics[width=0.22\textwidth,trim=1cm 1.5cm 0.5cm 1.5cm,clip]{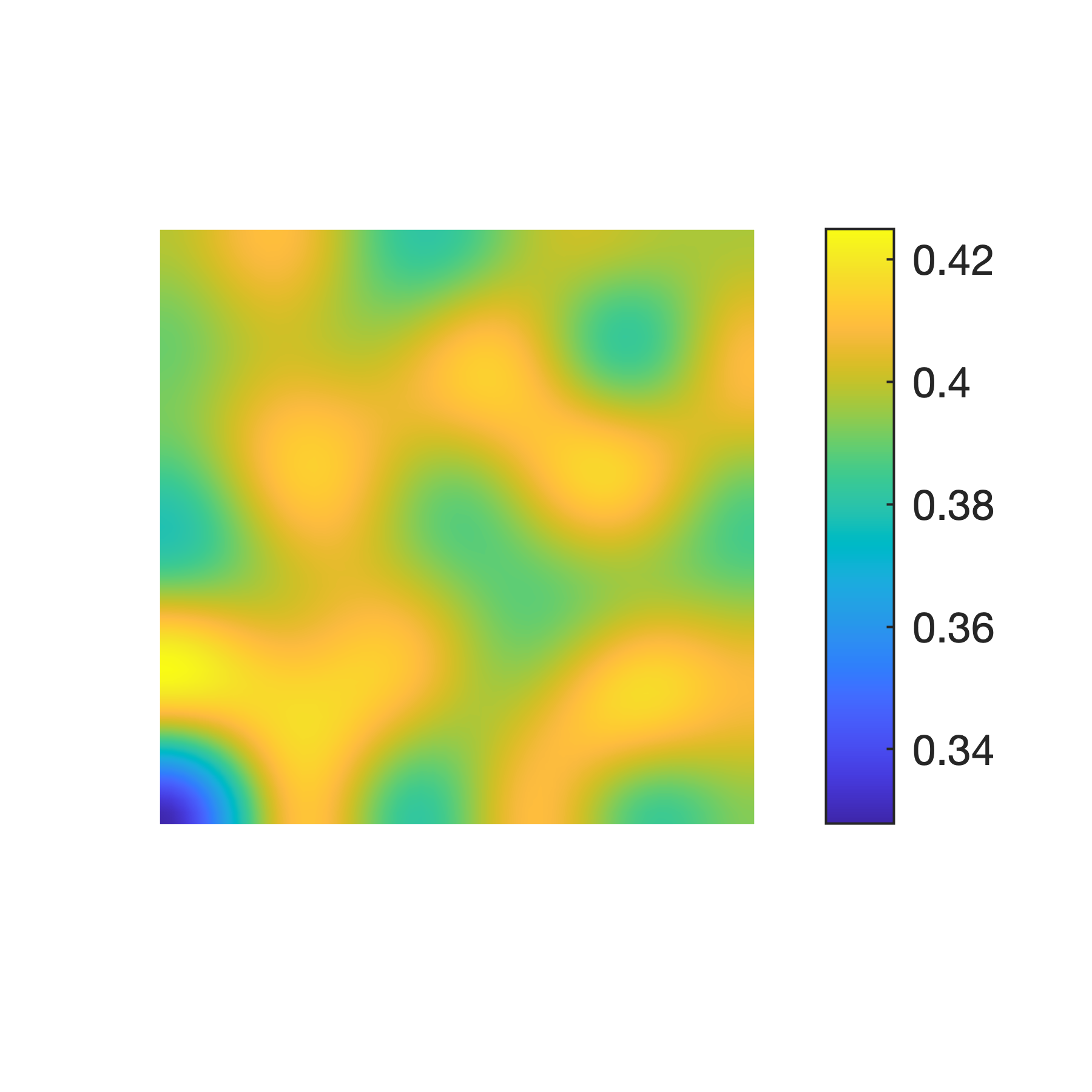}
 \includegraphics[width=0.22\textwidth,trim=1cm 1.5cm 0.5cm 1.5cm,clip]{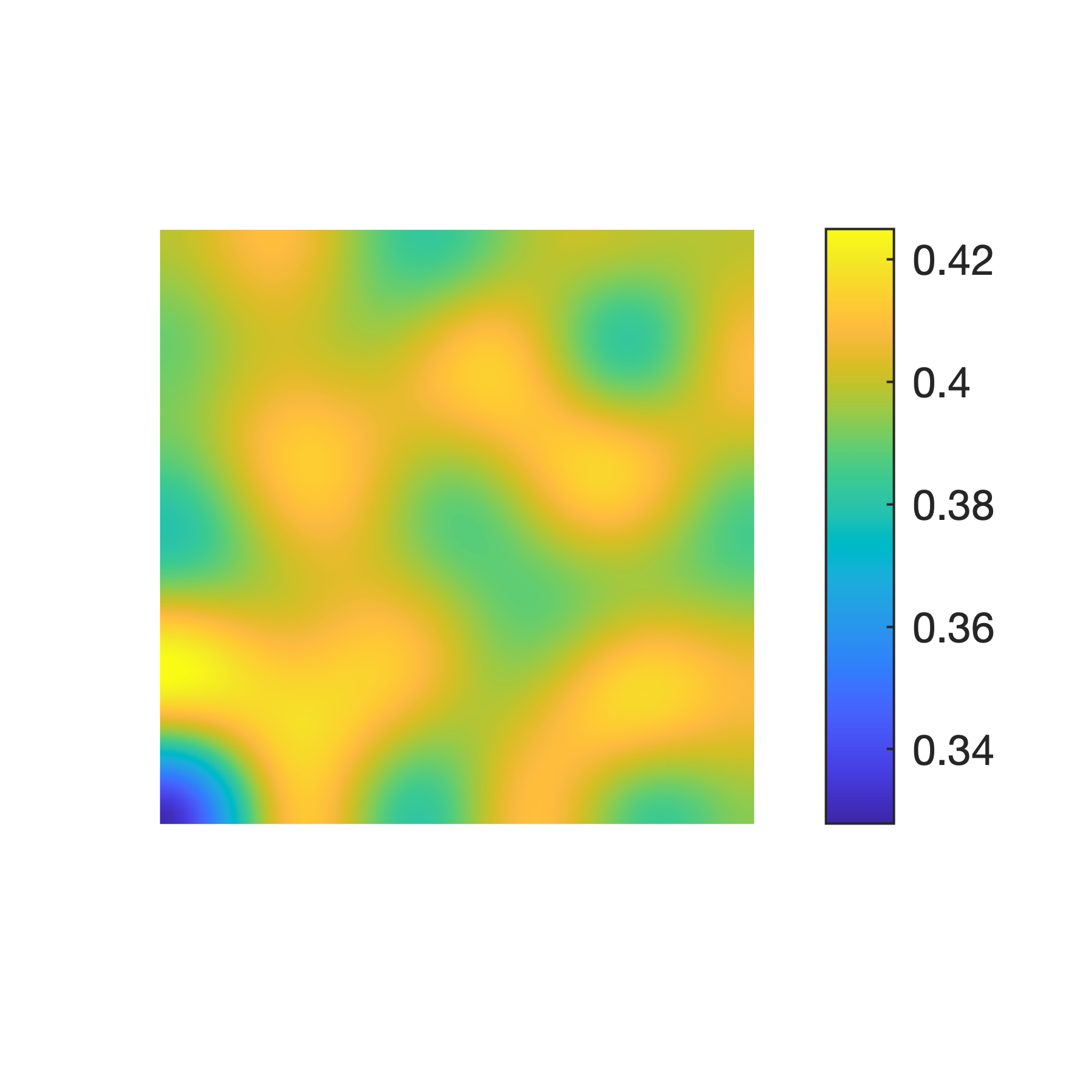}
 \includegraphics[width=0.22\textwidth,trim=1cm 1.5cm 0.5cm 1.5cm,clip]{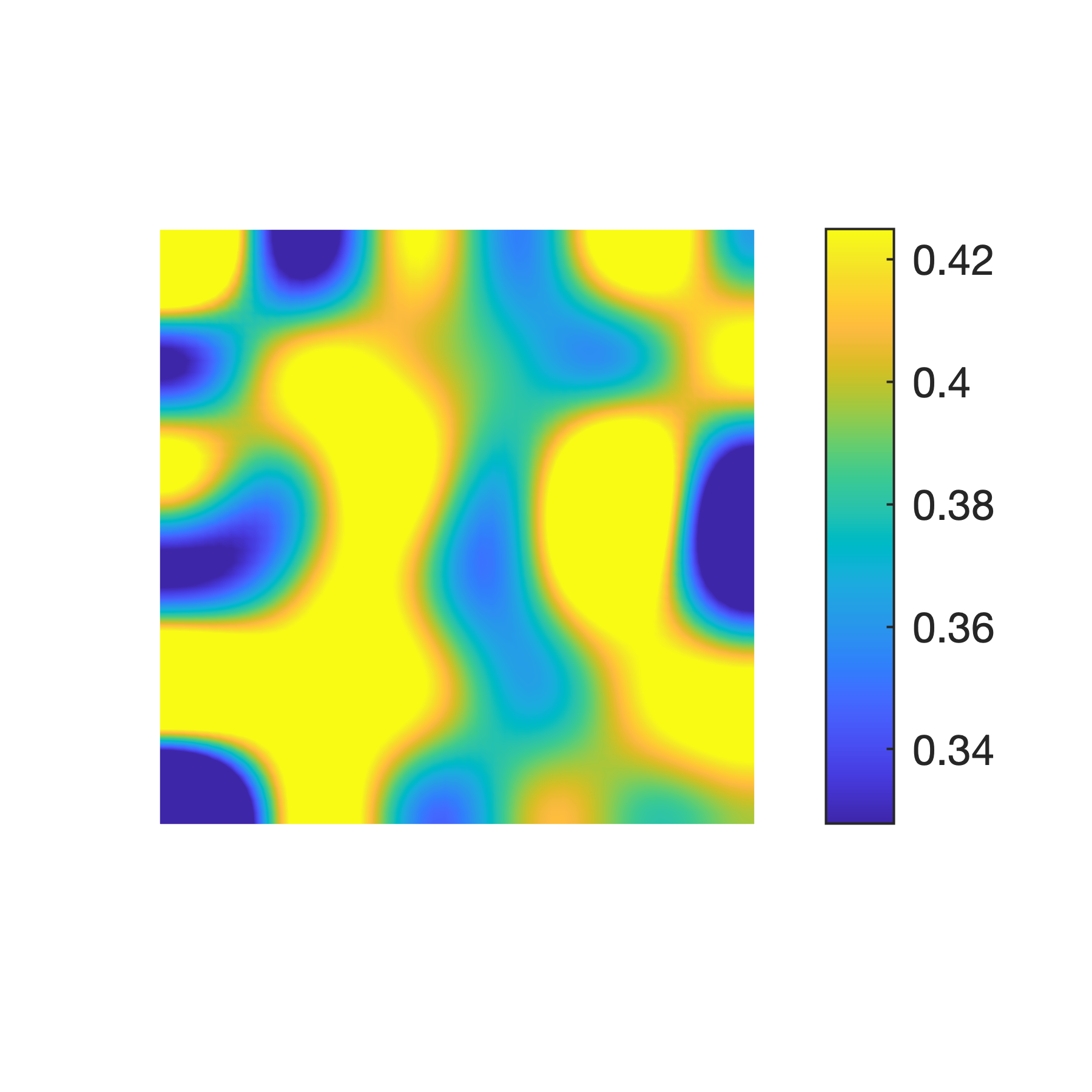}
 \includegraphics[width=0.22\textwidth,trim=1cm 1.5cm 0.5cm 1.5cm,clip]{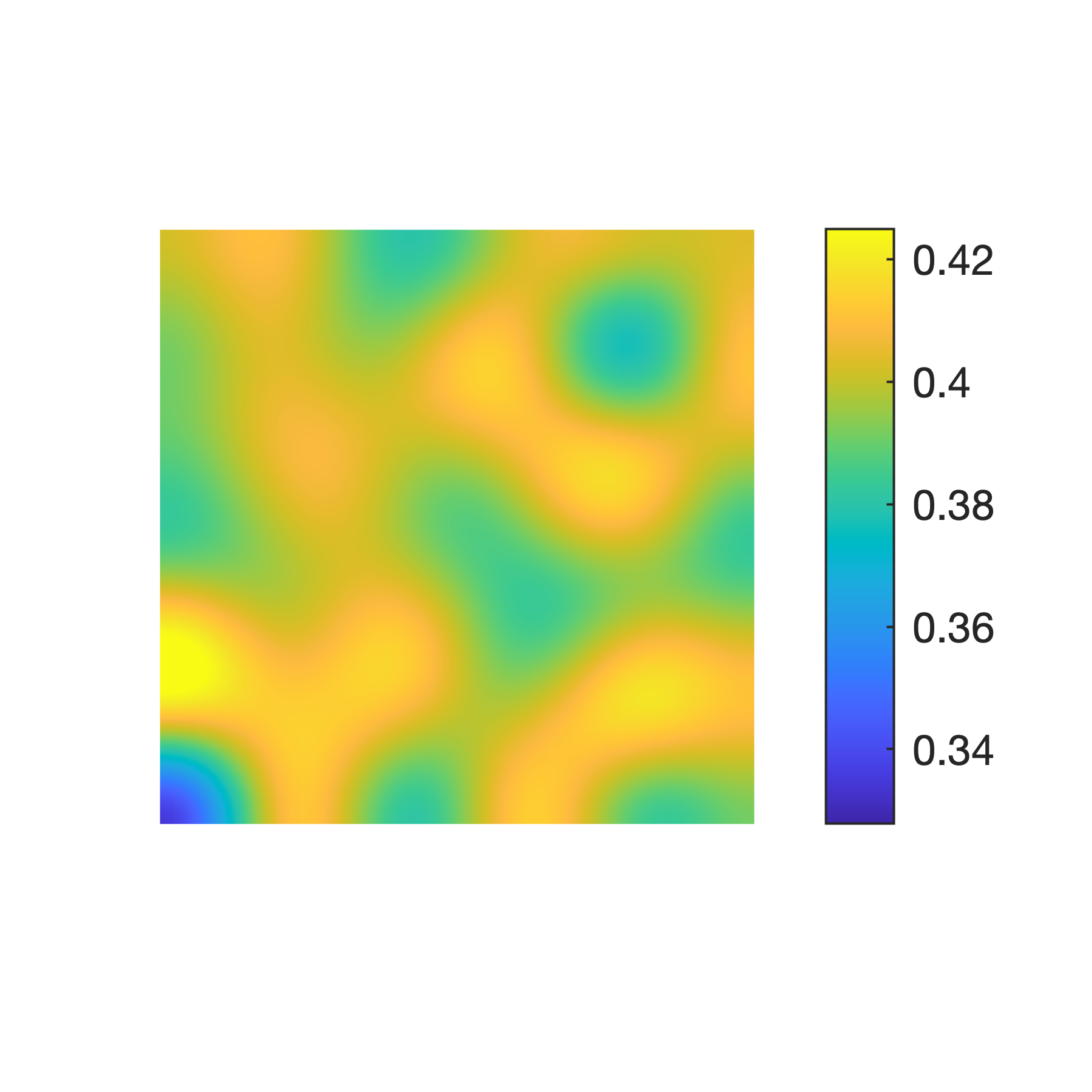}
	\caption{Joint inversion of $(\gamma, \sigma)$ (top, bottom) in Numerical Experiment II. Shown from left to right are: true coefficients, reconstruction from noise-free data, reconstruction from data with $5\%$ additive Gaussian noise, and reconstruction from data with $5\%$ multiplication Gaussian noise. }
	\label{FIG:reconstructed_diffusion_absorption}
\end{figure}

\subsection{The acoustic inverse problem}
\label{SUBSEC:Num Acous}

We now present some simulation results on the acoustic wave propagation model~\eqref{EQ:Wave}-\eqref{EQ:Wave Data}. We use the transmission geometry where the illuminating sources are placed on one side of the medium, while the receivers are placed on the opposite side of the medium. This is the setup in ultrasound transmission tomography~\cite{HeSaSc-PMB13}; see the setup in the right plot of Figure~\ref{FIG:Domain}. To mimic wave propagation in the free space, which is the model used in applications, we use the standard technique of perfectly matched layers (PML)~\cite{Berenger-JCP94,Johnson-MIT07}. In a nutshell, we surround the medium to be reconstructed with a layer of specially designed (absorbing) medium such that waves exiting the medium will not be bounced back into the medium again. The details of the implementation and benchmark of the forward solver are documented in~\cite{Cheng-Thesis25}.

We use the time-domain stagger-grid finite difference scheme that adopts a second-order accuracy in both the time and the spatial directions to solve the wave equation~\eqref{EQ:Wave}, then record the wave signal starting at time $t_0 = 0$ until the termination time $T$ for each time step. The spatial grid is taken to be the uniform Cartesian grid $(x_i,z_j) = (i\Delta x,-1+j\Delta z),i,j = 0,1,\cdots,M$ with $\Delta x = \Delta z = 1/M$. The receivers are equally placed on the bottom boundary, coinciding with the grid points; namely, there are $M + 1$ receivers for each pair of $(\kappa, \rho)$. We then record the measured data of the form~\eqref{EQ:Wave Data} for the reconstructions. 

\paragraph{Experiment III.} The learning stage of our data-driven two-stage joint inversion procedure is independent of the physical model involved, besides the fact that we use the physical model to constrain the learning process so that the learned relation is consistent with the underlying physical model. Therefore, here, we omit the learning stage to save space but focus on the joint reconstruction. The true relation between the density $\rho$ and the bulk modulus $\kappa$ is taken as
\begin{equation}\label{EQ:Kappa-Rho}
    \rho(\bx)=\frac{1}{\sqrt{(2\pi \sigma^2)^2}}\int_{\bbR^2} e^{-\frac{|\bx-\by|^2}{2\sigma^2}} \kappa(\by) \chi_\Omega d\by\,.
\end{equation}
with $\sigma=3\Delta x$ and $\chi_\Omega$ the characteristic function of the region of interest.

In all the numerical results, we use data collected from $N_s=20$ point sources of the form
\begin{equation}\label{wave_source1}
S(t, \bx) = h(t)\delta(\bx-\bx_s)\,,
\end{equation}
with time signature given by the Ricker wavelet~\cite{GhKr-IEEE14}:
\[
    \psi(t) =A\left(1 - 2\pi^2 f_M^2 (t-t_0) ^2 \right) e^{-\pi^2 f_M^2 (t-t_0)^2 }\,,
\]
where $A$ is the amplitude, and $f_M$ is the peak frequency of the source. For the purpose of illustration, we show in Figure~\ref{FIG:Wave Field} some typical wave fields inside the medium.
\begin{figure}[!htb]
\centering
\includegraphics[width=0.23\textwidth]{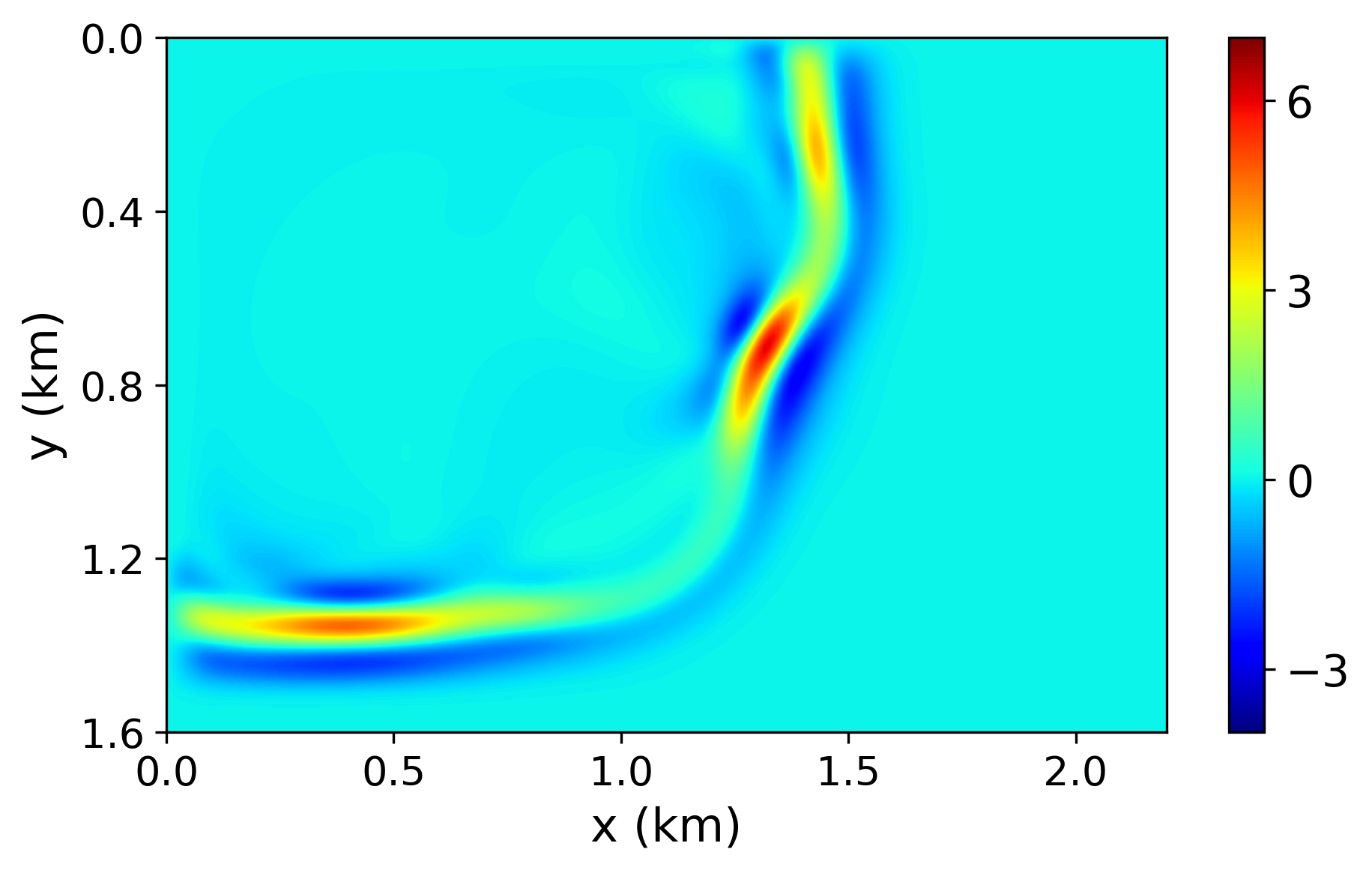}
\includegraphics[width=0.23\textwidth]{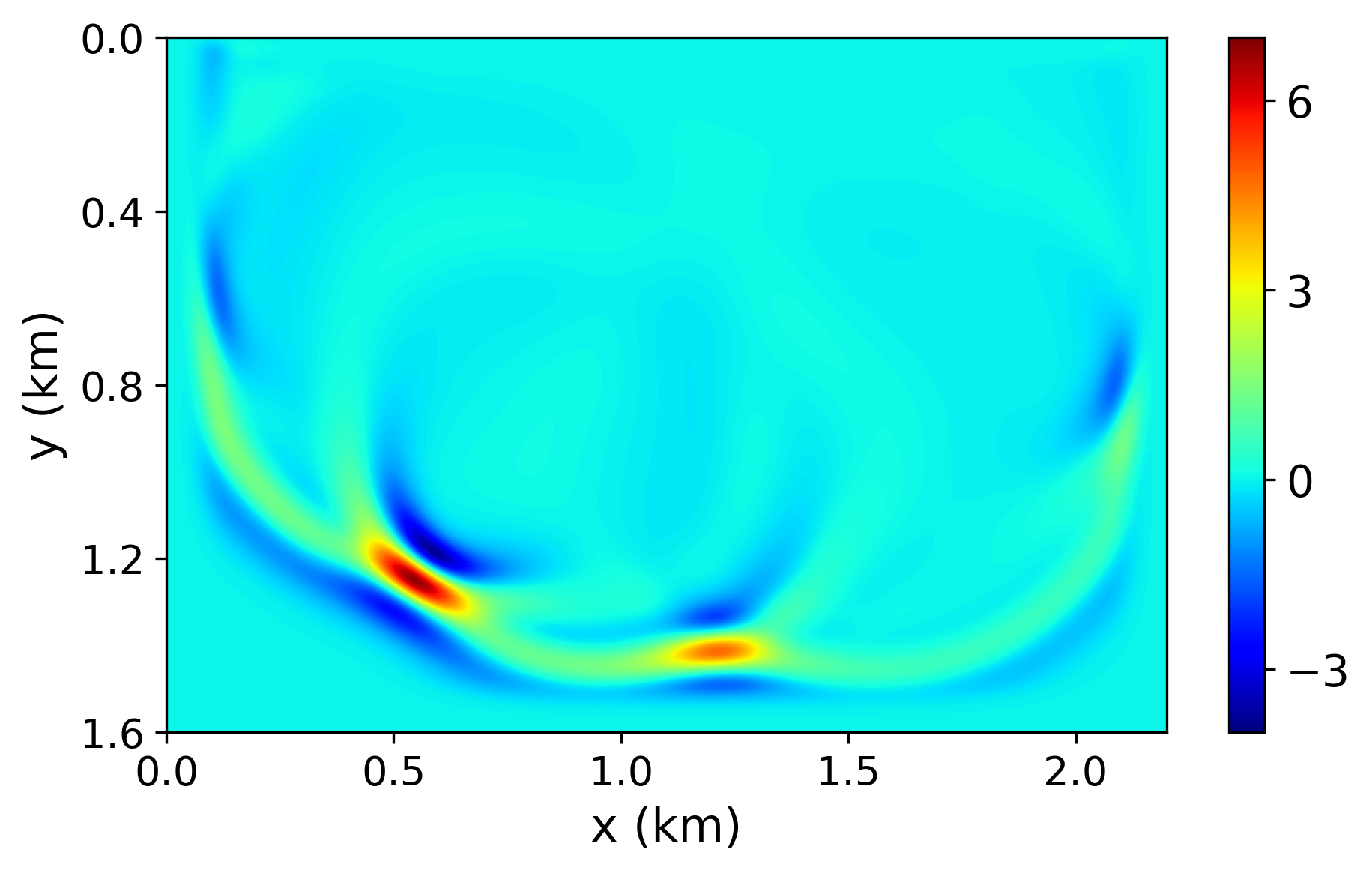}
\includegraphics[width=0.23\textwidth]{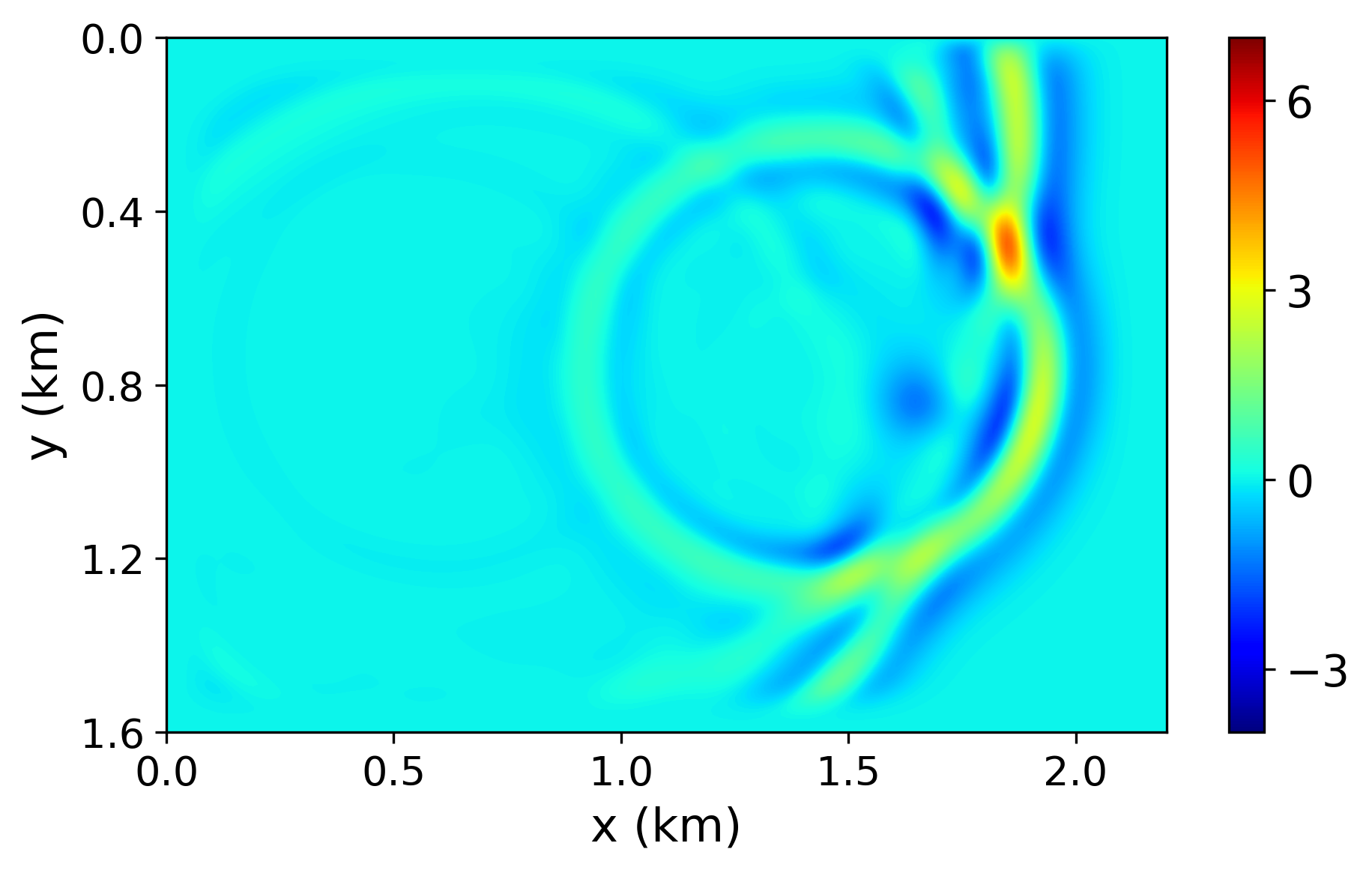}
\includegraphics[width=0.23\textwidth]{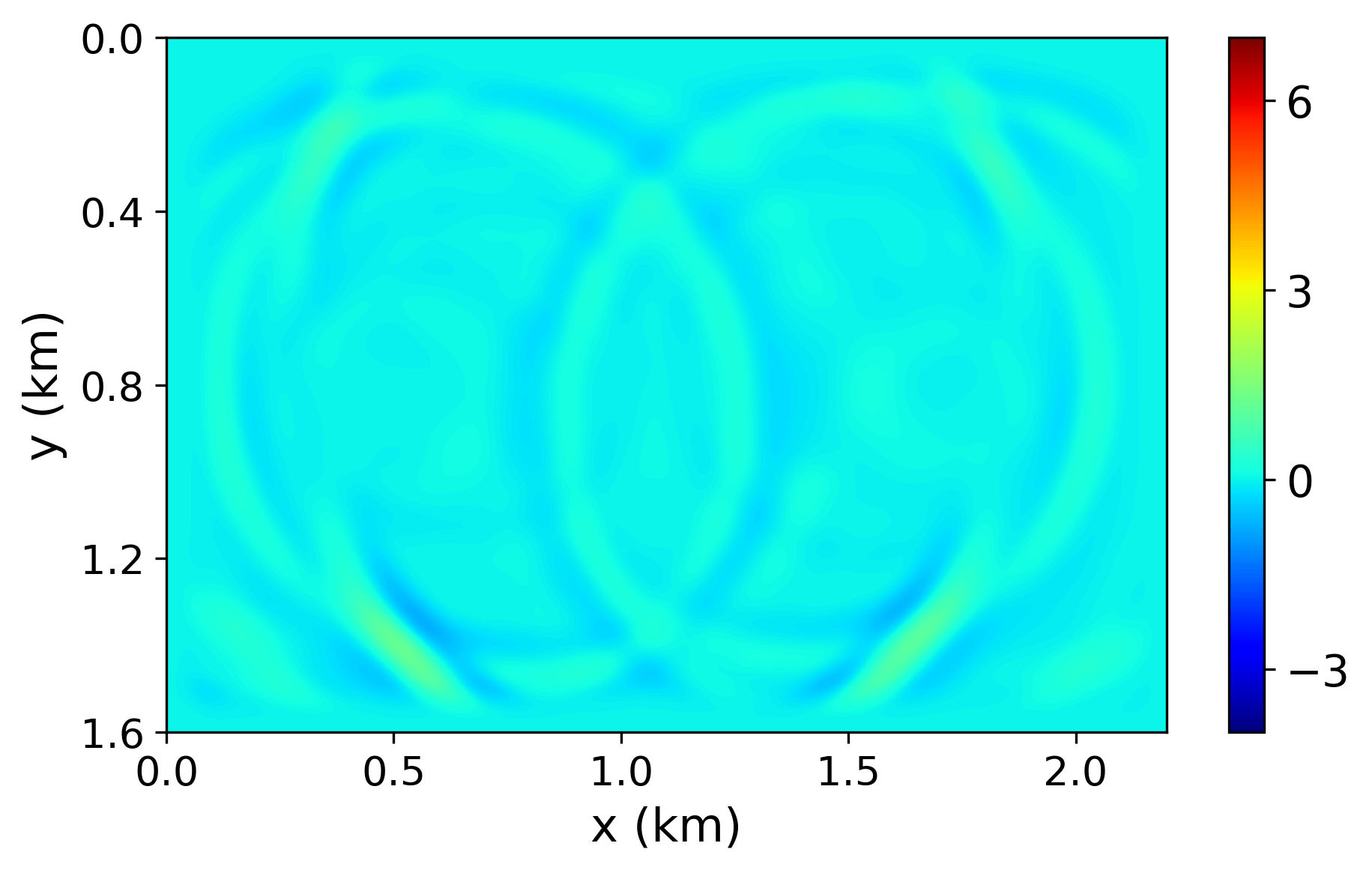}
\caption{Typical wave fields inside the medium at given times. (a-b) is for the medium in Figure~\ref{FIG:Kappa-Rho-Gaussian} and (c-d) is for the medium in Figure~\ref{FIG:Kappa-Rho-Disc}.}
\label{FIG:Wave Field}
\end{figure}

\begin{figure}[!htb]
\centering
\includegraphics[width=0.26\textwidth]{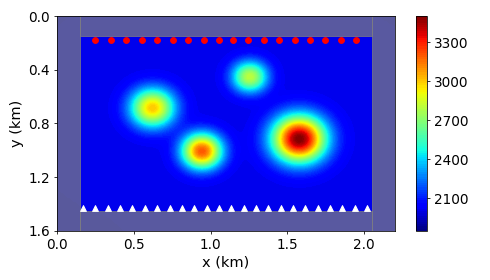}
\includegraphics[width=0.72\textwidth]{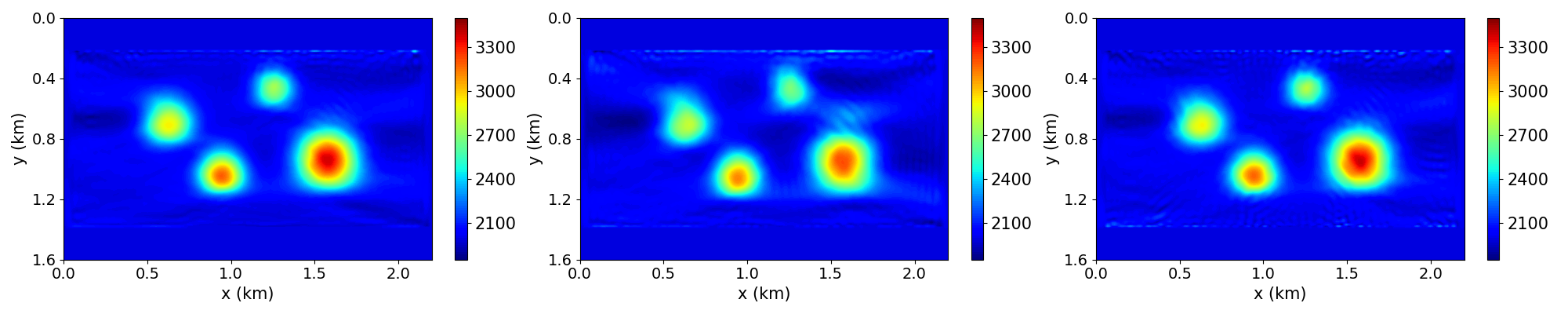}\\
\includegraphics[width=0.26\textwidth]{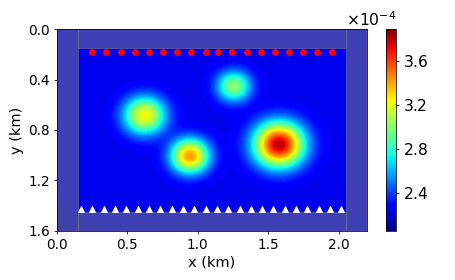}
\includegraphics[width=0.72\textwidth]{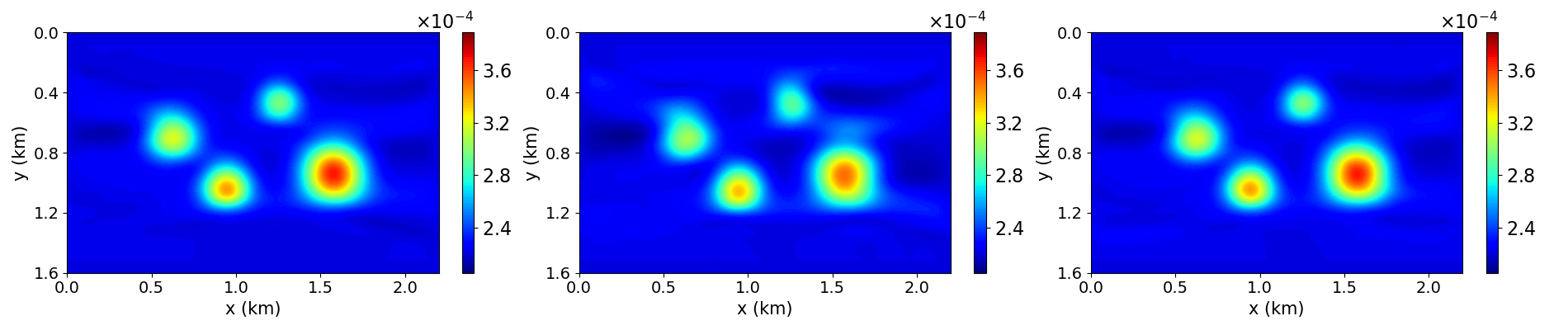}
\caption{Joint inversion of $(\kappa, \rho)$ in Numerical Experiment III. Shown from left to right are: true coefficients $(v:=\sqrt{\frac{\kappa}{\rho}}, \rho)$ (top, bottom), reconstruction from noise-free data, reconstruction from data with $5\%$ additive Gaussian noise, and reconstruction from data with $10\%$ Gaussian noise.}
\label{FIG:Kappa-Rho-Gaussian}
\end{figure}
We perform joint inversions here using the relation~\eqref{EQ:Kappa-Rho} in two different numerical simulations. In Figure~\ref{FIG:Kappa-Rho-Gaussian}, we show the true coefficients, where $\kappa$ is the superposition of a constant background and a few Gaussian functions, and the reconstructions from data with different noise levels. We plot the wave speed $v:=\sqrt{\frac{\kappa}{\rho}}$ (which is only the definition of wave speed if $\rho$ is a constant) and the density $\rho$. The optimization algorithm for the joint reconstruction is set with the termination tolerances $10^{-8}$ for both the first-order optimality condition and the updating step size. In addition, the hyper-parameters in Algorithm ~\ref{ALG:JointInv} is set to be $(\eta_j, J) = (0, 3)$. Since we used data collected from many illumination sources, $N_s=20$, the effective noise level in the data is likely much lower than the numbers provided. Even so, we observe significant errors in the reconstructions.

We repeat the simulation for a different true $(\kappa, \rho)$ profile where the bulk modulus $\kappa$ is piecewise constant. The results are shown in Figure~\ref{FIG:Kappa-Rho-Disc}. The algorithmic parameters are the same as in the previous example. The quality of the reconstructions is quite reasonable. Let us emphasize again that the true relation between $\kappa$ and $\rho$ in~\eqref{EQ:Kappa-Rho} is used in the joint reconstruction. Our main purpose here is to show that, once we know the relation between $\kappa$ and $\rho$, the joint reconstruction can be done with good quality. We are not here to show that we can learn the relation faithfully (a task that depends on the quality of the historical data of $\kappa$ and $\rho$, as well as the learning algorithm proposed).
\begin{figure}[!htb]
\centering
\includegraphics[width=0.26\textwidth]{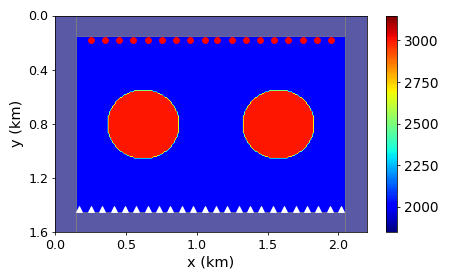}
\includegraphics[width=0.72\textwidth]{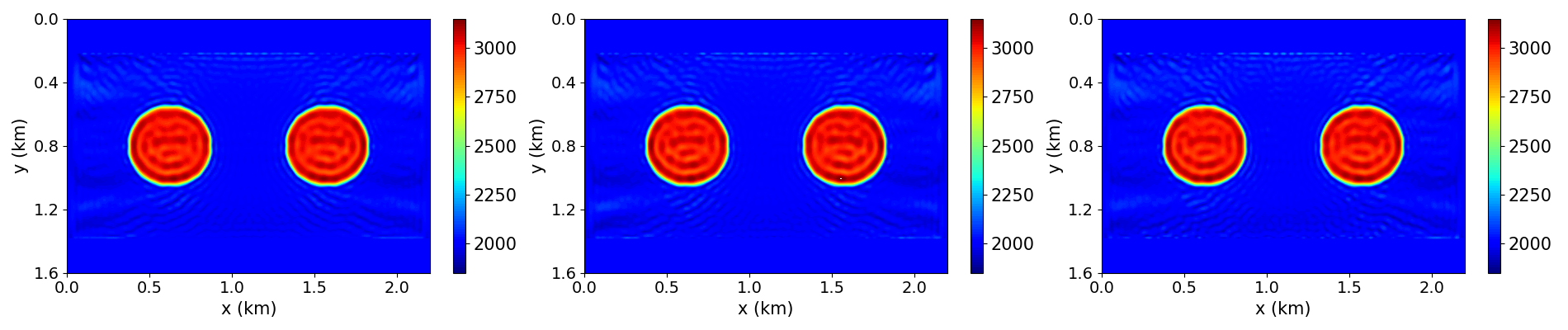}\\
\includegraphics[width=0.26\textwidth]{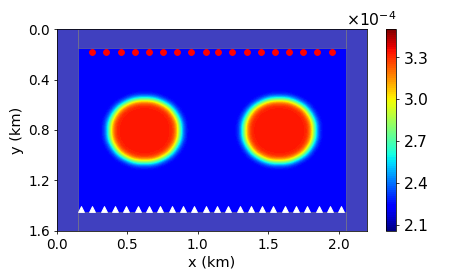}
\includegraphics[width=0.72\textwidth]{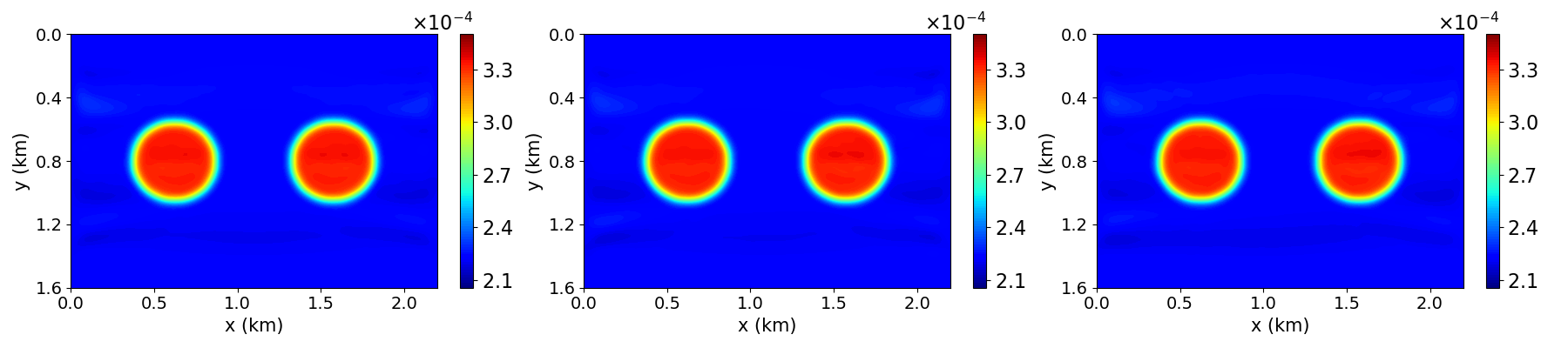}
\caption{Same as Figure~\ref{FIG:Kappa-Rho-Gaussian} except that the true coefficients are piecewise constant now.}
\label{FIG:Kappa-Rho-Disc}
\end{figure}

\section{Impact of learning inaccuracy on reconstruction}
\label{SEC:Uncertainty}

While numerical examples in rather academic environments show promises in learning relations between the coefficients from given data, it is not possible to learn the exact relation between the unknowns in practice, even if such an exact relation exists. In other words, the map $\cN_{\wh \theta}$ that we learned is only an approximation to the true relation. In this section, we show that, under reasonable assumptions on the inverse problems, when the learning error is under control, the error caused in the joint reconstruction is also under control.

We will use the following standard notations in the rest of the paper. We denote by $L^p(\Omega)$ ($1\le p\le \infty$) the usual space of Lebesgue integrable functions on $\Omega$, and $W^{k, p}(\Omega)$ the Sobolev space of functions whose $j$th derivatives ($0\le j\le k$) are in $L^p(\Omega)$. We use the short notation $\cH^k(\Omega):=W^{k, 2}(\Omega)$. The set of functions whose derivatives up to $k$ are continuous in $\Omega$ is denoted by $\cC^k(\Omega)$. We denote by $\cB(\Omega)$ the set of strictly positive functions bounded between two constants $\underline{\alpha}$ and $\overline{\alpha}$, 
\begin{equation*}
\cB(\Omega)=\{f(\bx): \Omega\mapsto\bbR: 0< \underline{\alpha} \le f(\bx) \le \overline{\alpha} <\infty,\ \forall\bx\in \Omega\}.
\end{equation*}
For the simplicity of presentation, we assume that the domain $\Omega$ is bounded with a smooth boundary $\partial\Omega$. 

\subsection{Joint inversion of the diffusion model}
 
For the joint inversion problem with the diffusion model we introduced in ~\Cref{SUBSEC:Diff}, we make the following assumptions:\\[1ex]
(A-i) the boundary value of the diffusion coefficient $\gamma$, $\gamma_{|\partial\Omega}>0$, is known \emph{a priori};\\[1ex]
(A-ii) we have sufficient number of data set $H$ such that the absorption coefficient $\sigma$ is uniquely determined when the relation $\gamma=\cN_\theta(\sigma)$ is known;\\[1ex]
(A-iii) the boundary condition $S(\bx)>0$ is the restriction of a smooth function on $\partial\Omega$, and is such that the solution $u$ to the diffusion equation~\eqref{EQ:Diff} satisfies $u\ge \eps$ for some $\eps>0$.\\[1ex]
The assumptions are here only to simplify the presentation, as none of the assumptions is essential. In fact, (A-i) can be removed with additional boundary data as shown in~\cite{ReGaZh-SIAM13}, (A-ii) is true when data corresponding to two well-chosen boundary conditions are available~\cite{BaRe-IP11}, and (A-iii) is true when $\gamma$ and $\sigma$ are non-negative and smooth enough~\cite{ReZh-SIAM18}. 

Following the calculations of ~\cite{ReVa-SIAM20}, we can show the result below.

\begin{theorem}
Let $\sigma\in\cC^1(\Omega) \cap \cB$ and $\wt\sigma\in \cC^1(\Omega) \cap \cB$ be the absorption coefficients reconstructed from ~\eqref{EQ:Diff}-~\eqref{EQ:Diff Data} with the relations $\gamma=\cN_{\theta}(\sigma)$ and $\wt\gamma=\cN_{\wt\theta}(\wt\sigma)$ respectively. Assuming that the operators $\cN_{\theta}$ and $\cN_{\wt\theta}: \cC^1(\Omega)\cap \cB \mapsto \cC^1(\Omega)\cap \cB$ are sufficiently smooth. Then, under the assumptions (A-i)-(A-iii), there exists a constant $\fc$ such that
\begin{equation}\label{EQ:Bound Diff}
	\|\frac{\wt \sigma-\sigma}{\wt\sigma}\|_{\cH^1(\Omega)}\le \fc\Big(\|\cN_{\wt\theta}(\wt\sigma)-\cN_{\wt\theta}(\sigma)\|_{\cH^1(\Omega)}+\|\cN_{\wt\theta}(\sigma)-\cN_{\theta}(\sigma)\|_{\cH^1(\Omega)}\Big)\|\frac{H}{\sigma}\|_{W^{2,\infty}(\Omega)}.
\end{equation}
\end{theorem}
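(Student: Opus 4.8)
The plan is to turn~\eqref{EQ:Bound Diff} into a standard stability estimate for the difference of the two photon densities and then transfer that bound back to the absorption coefficients. \emph{Reducing to an elliptic problem for the density difference.} By~(A-iii) the density $u$ corresponding to the pair $(\gamma,\sigma)=(\cN_\theta(\sigma),\sigma)$ satisfies $u\ge\eps>0$, so from the data relation $H=\sigma u$ we may write $u=H/\sigma$; likewise the density $\wt u$ corresponding to $(\wt\gamma,\wt\sigma)=(\cN_{\wt\theta}(\wt\sigma),\wt\sigma)$ is $\wt u=H/\wt\sigma$, and crucially it comes with the \emph{same} datum $H$. Writing the two copies of~\eqref{EQ:Diff} and subtracting, the zeroth-order contributions cancel, $\wt\sigma\wt u-\sigma u=H-H=0$, so the difference $w:=\wt u-u=H\big(\tfrac1{\wt\sigma}-\tfrac1{\sigma}\big)$ solves the linear elliptic boundary value problem
\[
-\nabla\cdot(\wt\gamma\nabla w)=\nabla\cdot\big((\wt\gamma-\gamma)\nabla u\big)\quad\text{in }\Omega,\qquad
\bn\cdot\wt\gamma\nabla w+\ell w=-\,\bn\cdot(\wt\gamma-\gamma)\nabla u\quad\text{on }\partial\Omega,
\]
the Robin condition arising from subtracting $\bn\cdot\gamma\nabla u+\ell u=S$ and $\bn\cdot\wt\gamma\nabla\wt u+\ell\wt u=S$.

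\emph{Energy estimate.} Testing the equation for $w$ against $w$ and inserting the Robin condition, the boundary terms involving $\bn\cdot(\wt\gamma-\gamma)\nabla u$ cancel and one obtains the identity
\[
\int_\Omega\wt\gamma\,|\nabla w|^2+\ell\int_{\partial\Omega}w^2=-\int_\Omega(\wt\gamma-\gamma)\,\nabla u\cdot\nabla w.
\]
Since $\wt\gamma=\cN_{\wt\theta}(\wt\sigma)\in\cB$ is bounded below by $\underline{\alpha}>0$ and $\ell>0$, Cauchy--Schwarz on the right-hand side controls $\|\nabla w\|_{L^2(\Omega)}$ and $\|w\|_{L^2(\partial\Omega)}$ by $\|(\wt\gamma-\gamma)\nabla u\|_{L^2(\Omega)}$, and a trace--Poincar\'e inequality then upgrades this to the full bound
\[
\|w\|_{\cH^1(\Omega)}\le C\,\|(\wt\gamma-\gamma)\nabla u\|_{L^2(\Omega)}\le C\,\|\wt\gamma-\gamma\|_{\cH^1(\Omega)}\,\|\nabla u\|_{L^\infty(\Omega)},
\]
where $\wt\gamma-\gamma$ is estimated in $L^2$ (it lies in $\cH^1(\Omega)$ because $\cN_\theta,\cN_{\wt\theta}$ map into $\cC^1(\Omega)\cap\cB$) and $\nabla u$ in $L^\infty$.

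\emph{Transferring to the coefficients.} From $u=H/\sigma$ and $\wt u=H/\wt\sigma$ one computes $\dfrac{\wt\sigma-\sigma}{\wt\sigma}=1-\dfrac{\sigma}{\wt\sigma}=\dfrac{u-\wt u}{u}=-\dfrac{w}{u}$. Using $u\ge\eps$ and $\|\nabla u\|_{L^\infty(\Omega)}=\|\nabla(H/\sigma)\|_{L^\infty(\Omega)}\le\|H/\sigma\|_{W^{1,\infty}(\Omega)}$, the quotient rule gives $\big\|w/u\big\|_{\cH^1(\Omega)}\le C\big(1+\|H/\sigma\|_{W^{1,\infty}(\Omega)}\big)\|w\|_{\cH^1(\Omega)}$; combining with the previous step and absorbing the lower-order factor into $\fc$ yields $\big\|\tfrac{\wt\sigma-\sigma}{\wt\sigma}\big\|_{\cH^1(\Omega)}\le\fc\,\|\wt\gamma-\gamma\|_{\cH^1(\Omega)}\,\|H/\sigma\|_{W^{1,\infty}(\Omega)}$. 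Finally, decompose $\wt\gamma-\gamma=\cN_{\wt\theta}(\wt\sigma)-\cN_\theta(\sigma)=\big[\cN_{\wt\theta}(\wt\sigma)-\cN_{\wt\theta}(\sigma)\big]+\big[\cN_{\wt\theta}(\sigma)-\cN_\theta(\sigma)\big]$ and apply the triangle inequality in $\cH^1(\Omega)$ to arrive at~\eqref{EQ:Bound Diff}.

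\emph{Where the difficulty lies.} The elliptic estimate itself is routine; the real work is the functional-analytic bookkeeping that keeps every quantity well defined — one needs~(A-iii) to divide by $u$, the mapping hypothesis $\cN_\theta,\cN_{\wt\theta}:\cC^1(\Omega)\cap\cB\to\cC^1(\Omega)\cap\cB$ so that $\gamma,\wt\gamma$ are uniformly positive and lie in $\cH^1(\Omega)$, and $\ell>0$ so that the Robin bilinear form is coercive on all of $\cH^1(\Omega)$ (the equation for $w$ has no zeroth-order term, that term having cancelled under the subtraction). One also has to be careful that the last factoring of $\|(\wt\gamma-\gamma)\nabla u\|_{L^2}$ uses the $\cH^1$-norm of $\wt\gamma-\gamma$ against the $L^\infty$-norm of $\nabla u$, which is what produces the two mismatch terms on the right of~\eqref{EQ:Bound Diff}. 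Assumption~(A-ii) does not enter the estimate itself but guarantees that the reconstructed $\sigma,\wt\sigma$ — hence the left-hand side — are well defined.
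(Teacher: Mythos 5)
Your proof is correct and follows essentially the same route as the paper's: subtract the two diffusion equations, use the common datum $H=\sigma u=\wt\sigma\wt u$ to cancel the zeroth-order terms, bound $w$ by an elliptic (energy) estimate in terms of $\wt\gamma-\gamma$, transfer back via $w=\pm\frac{\wt\sigma-\sigma}{\wt\sigma}u$, and split $\wt\gamma-\gamma$ by the triangle inequality. The only (harmless) deviation is that you keep the inhomogeneous Robin data and let it cancel in the weak formulation, whereas the paper invokes (A-i) to make the boundary condition for $w$ homogeneous outright; your version of the energy estimate is, if anything, slightly more careful about which norms of $u$ and $\wt\gamma-\gamma$ are actually needed.
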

\begin{proof}
Let $u$ and $\wt u$ be the solution to ~\eqref{EQ:Diff} corresponding to $(\gamma:=\cN_\theta(\sigma), \sigma)$ and $(\wt\gamma:=\cN_{\wt\theta}(\wt\sigma), \wt \sigma)$, respectively. Then we have
\begin{equation*}
	\begin{array}{rcll}
	-\nabla\cdot \cN_{\theta}(\sigma) \nabla u(\bx) + \sigma(\bx)  u(\bx) &=& 0, & \mbox{in}\ \ \Omega\\[1ex]
	\bn\cdot \cN_\theta(\sigma) \nabla u + \ell u(\bx) & = & S(\bx), & \mbox{on}\ \partial\Omega\,.
	\end{array} 	 
\end{equation*}
and
\begin{equation*}
	\begin{array}{rcll}
	-\nabla\cdot \cN_{\wt\theta}(\wt \sigma) \nabla \wt u(\bx) + \wt \sigma(\bx) \wt u(\bx) &=& 0, & \mbox{in}\ \ \Omega\\[1ex]
	\bn\cdot \cN_{\wt \theta}(\wt\sigma)\nabla \wt u + \ell \wt u(\bx) & = & S(\bx), & \mbox{on}\ \partial\Omega\,.
	\end{array} 	 
\end{equation*}
With the assumptions (A-i)-(A-iii) as well as the assumption on the positivity of the coefficient pairs $(\cN_\theta(\sigma), \sigma)$ and $(\cN_{\wt\theta}(\wt\sigma), \wt\sigma)$, the above diffusion equations admit unique solutions $u\in\cC^2(\bar \Omega)$ and $\wt u\in\cC^2(\bar \Omega)$ by classical elliptic theory~\cite{Evans-Book13,GiTr-Book00}. Moreover, $M\ge u, \wt u \ge \eps$ for some $\eps>0$ and $M>0$. 

Let $w:=u-\wt u$. It is straightforward to verify that $w$ solves
\begin{equation}\label{EQ:Equat for w}
	\begin{array}{rcll}
	-\nabla\cdot \cN_{\wt\theta}(\wt\sigma) \nabla w + \wt\sigma(\bx)  w &=& -\nabla\cdot (\cN_{\wt\theta}(\wt\sigma)-\cN_{\theta}(\sigma)) \nabla u + (\wt\sigma-\sigma) u, & \mbox{in}\ \ \Omega\\[1ex]
	\bn\cdot \cN_{\wt \theta}(\wt\sigma) \nabla w + \ell w(\bx) & = & 0, & \mbox{on}\ \partial\Omega
	\end{array} 	 
\end{equation}
where we used the assumption in (A-i), that is, the boundary value of the diffusion coefficient is known, so $\cN_{\theta}(\sigma)_{|\partial\Omega}=\cN_{\wt \theta}(\wt\sigma)_{|\partial\Omega}$. Meanwhile, since $\sigma$ and $\wt\sigma$ are reconstructed from the same data $H$, we have that 
\begin{equation}\label{EQ:Data Diff Eq}
	0=\sigma u-\wt \sigma \wt u=\wt \sigma w - (\wt\sigma-\sigma) u\,.
\end{equation}
This gives us, from~\eqref{EQ:Equat for w}, that $w$ solves the equation
\begin{equation*}
	\begin{array}{rcll}
	-\nabla\cdot \cN_{\wt\theta}(\wt\sigma) \nabla w(\bx) &=& -\nabla\cdot (\cN_{\wt\theta}(\wt\sigma)-\cN_{\theta}(\sigma)) \nabla u, & \mbox{in}\ \ \Omega\\[1ex]
	\bn\cdot \cN_{\wt \theta}(\wt\sigma) \nabla w + \ell w(\bx) & = & 0, & \mbox{on}\ \partial\Omega\,.
	\end{array} 	 
\end{equation*}
\RED{With the assumptions given, this equation for $w$ is uniformly elliptic. By standard elliptic estimates~\cite{Evans-Book13,GiTr-Book00}, it follows that
\begin{multline*}
    \|w\|_{\cH^1(\Omega)}\le \|\nabla\cdot (\cN_{\wt\theta}(\wt\sigma)-\cN_{\theta}(\sigma)) \nabla u\|_{L^2(\Omega)}\\
    \le\|\nabla u\|_{L^\infty(\Omega)}\|\nabla  (\cN_{\wt\theta}(\wt\sigma)-\cN_{\theta}(\sigma))\|_{L^2(\Omega)}+\|\Delta u\|_{L^\infty(\Omega)}\|\cN_{\wt\theta}(\wt\sigma)-\cN_{\theta}(\sigma)\|_{L^2(\Omega)}\\ 
    \le \|u\|_{W^{2,\infty}(\bar\Omega)}\|\cN_{\wt\theta}(\wt\sigma)-\cN_{\theta}(\sigma)\|_{\cH^1(\Omega)}\,.
\end{multline*} 
Using the fact that
\[
\cN_{\wt\theta}(\wt\sigma)-\cN_{\theta}(\sigma)=\cN_{\wt\theta}(\wt\sigma)-\cN_{\wt\theta}(\sigma)+\cN_{\wt \theta}( \sigma)-\cN_{\theta}(\sigma)\,,
\]
we can further write the above bound into
\[
    \|w\|_{\cH^1(\Omega)}\le \|u\|_{W^{2,\infty}(\bar\Omega)}\Big(\|\cN_{\wt\theta}(\wt\sigma)-\cN_{\wt \theta}(\sigma)\|_{\cH^1(\Omega)}+\|\cN_{\wt \theta}(\sigma)-\cN_{\theta}(\sigma)\|_{\cH^1(\Omega)}\Big)\,.
\]
On the other hand, we have from~\eqref{EQ:Data Diff Eq} that
\[
    \|\frac{(\wt\sigma-\sigma)}{\wt\sigma} u\|_{\cH^1(\Omega)} \le \|w\|_{\cH^1(\Omega)}\,.
\]
Therefore, we arrive at
\[
    \|\frac{(\wt\sigma-\sigma)}{\wt\sigma} u\|_{\cH^1(\Omega)} \le \|u\|_{W^{2,\infty}(\bar\Omega)}\Big(\|\cN_{\wt\theta}(\wt\sigma)-\cN_{\wt \theta}(\sigma)\|_{\cH^1(\Omega)}+\|\cN_{\wt \theta}(\sigma)-\cN_{\theta}(\sigma)\|_{\cH^1(\Omega)}\Big)\,.
\]}
The proof is complete after using the fact that $u$ is bounded from below and $u=\dfrac{H}{\sigma}$.
\end{proof}
\RED{This result says that when the operators $\cN_\theta$ and $\cN_{\wt\theta}$ are sufficiently close to each other, in which case $\|\cN_\theta(\sigma)-\cN_{\wt\theta}(\sigma)\|_{\cH^1(\Omega)}$ is small, and $\cN_{\wt\theta}$ is sufficiently smooth, in which case $\|\cN_{\wt\theta}(\wt\sigma)-\cN_{\wt\theta}(\sigma)\|_{\cH^1(\Omega)}/\|\wt\sigma-\sigma\|_{\cH^1(\Omega)}$ is small, the reconstructions based on those relations are also sufficiently close to each other. In other words, the impact of learning errors on joint reconstruction is under control.}

\RED{One should also notice that the above characterization of error propagation is relative in the sense that the quantity $\frac{H}{\sigma}$ appears on the right-hand side of the inequality. In other words, the stability estimate does not directly imply that we can reconstruct $\sigma$ accurately. In fact, it only says that if we have a reconstruction algorithm, and we use the relations $\cN_\theta$ and $\cN_{\wt\theta}$ in the reconstruction process, then $\cN_\theta$ and $\cN_{\wt \theta}$ being close implies that $\sigma$ and $\wt \sigma$ are close, even though both reconstructions can be fairly inaccurate.}

\subsection{Joint inversion of the wave model}

We now investigate the same issue for the acoustic wave model~\eqref{EQ:Wave}-\eqref{EQ:Wave Data}  in ~\Cref{SUBSEC:Acous}. To simplify the notation, we perform a change of variables $\kappa \to 1/\kappa$ and $\rho\to 1/\rho$, and consider relations between the new variables: $\rho=\cN_\theta(\kappa)$. We make the following assumptions:\\[1ex]
(B-i) the boundary value of the density $\rho$, $\rho_{|\partial\Omega}>0$, is known \emph{a priori};\\[1ex]
(B-ii) $S(t,\bx)>0$ is the restriction of a $\cC^\infty$ function on $\partial\Omega$.\\[1ex]
(B-iii) the problem of determining $\kappa$ (with fixed $\rho$) from data $\Lambda_{\kappa}: S \mapsto H$ can be stably solved in the sense that $\|\wt\kappa-\kappa\|_{W^{1,\infty}}\le C\|\Lambda_{\wt\kappa}-\Lambda_{\kappa}\|_{\cH^{1/2}((0, T)\times\partial\Omega) \mapsto \cH^{3/2}((0, T)\times\partial\Omega)}$ for some $C>0$.\\[1ex]
The problem in (B-iii), that is, determining $\kappa$ from data $\Lambda_{\kappa}$, has been extensively studied. The uniqueness of the reconstruction problem has been proved by the method of boundary control; see, for instance,~\cite {Belishev-IP97,Isakov-Book06} and references therein. Global stability, such as what we assumed here, is not known. Our main objective is to see the impact of uncertainty in $\rho$ on the reconstruction of $\kappa$, not how accurately we can reconstruct $\kappa$. This assumption is necessary but also reasonable.
\begin{theorem}
Under the assumptions (B-i)-(B-iii), let $\kappa\in W^{1,\infty}(\Omega) \cap \cB$ and $\wt\kappa\in W^{1,\infty}(\Omega) \cap \cB$ be reconstructed from ~\eqref{EQ:Wave}-\eqref{EQ:Wave Data} with the relations $\rho=\cN_{\theta}(\kappa)$ and $\wt\rho =\cN_{\wt\theta}(\wt\kappa)$ respectively. Assuming that the operators $\cN_{\theta}$ and $\cN_{\wt\theta}: W^{1,\infty}(\Omega)\cap \cB \mapsto W^{1,\infty}(\Omega)\cap \cB$ are sufficiently smooth. Then there exists a constant $\fc$ such that
\begin{equation}\label{EQ:Bound Wave}
	\|\wt\kappa-\kappa \|_{W^{1,\infty}(\Omega)}\le \fc\Big(\|\cN_{\wt\theta}(\wt\kappa)-\cN_{\wt\theta}(\kappa)\|_{W^{1,\infty}(\Omega)}+\|\cN_{\wt\theta}(\kappa)-\cN_{\theta}(\kappa)\|_{W^{1,\infty}(\Omega)}\Big).
\end{equation}
\end{theorem}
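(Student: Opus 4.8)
The plan is to follow the same template as the diffusion-model proof, but to route the argument through the assumed stability estimate (B-iii) rather than through a direct PDE estimate for the inverse problem, since for the wave equation global stability of the coefficient-to-data map is only available as a hypothesis. Write $\Lambda_{\kappa,\rho}\colon S\mapsto H$ for the source-to-boundary map attached to the (transformed) wave system~\eqref{EQ:Wave}--\eqref{EQ:Wave Data} with coefficient pair $(\kappa,\rho)$; all coefficients below lie in $W^{1,\infty}(\Omega)\cap\cB$, which guarantees well-posedness by standard hyperbolic theory. By construction $\kappa$ is reconstructed from $H$ under the constraint $\rho=\cN_{\theta}(\kappa)$, so $\Lambda_{\kappa,\cN_{\theta}(\kappa)}=H$, and likewise $\Lambda_{\wt\kappa,\cN_{\wt\theta}(\wt\kappa)}=H$.

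First I would apply (B-iii) with the density \emph{frozen} at $\rho_\star:=\cN_{\wt\theta}(\wt\kappa)$. For this fixed $\rho_\star$ the datum generated by $\wt\kappa$ is exactly $H$, while the datum generated by $\kappa$ is $\Lambda_{\kappa,\rho_\star}=\Lambda_{\kappa,\cN_{\wt\theta}(\wt\kappa)}$; hence
\[
\|\wt\kappa-\kappa\|_{W^{1,\infty}(\Omega)}\le C\,\big\|\Lambda_{\wt\kappa,\rho_\star}-\Lambda_{\kappa,\rho_\star}\big\|_{\cH^{1/2}((0,T)\times\partial\Omega)\to\cH^{3/2}((0,T)\times\partial\Omega)}
= C\,\big\|\Lambda_{\kappa,\cN_{\theta}(\kappa)}-\Lambda_{\kappa,\cN_{\wt\theta}(\wt\kappa)}\big\|,
\]
where the last equality uses $H=\Lambda_{\kappa,\cN_{\theta}(\kappa)}$. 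This reduces the theorem to bounding the sensitivity of the forward map to a perturbation of the density coefficient alone, with $\kappa$ held fixed.

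The second step is the forward Lipschitz estimate: for fixed $\kappa\in W^{1,\infty}(\Omega)\cap\cB$ and any $\rho_1,\rho_2\in W^{1,\infty}(\Omega)\cap\cB$,
\[
\big\|\Lambda_{\kappa,\rho_1}-\Lambda_{\kappa,\rho_2}\big\|_{\cH^{1/2}((0,T)\times\partial\Omega)\to\cH^{3/2}((0,T)\times\partial\Omega)}\le C'\,\|\rho_1-\rho_2\|_{W^{1,\infty}(\Omega)}.
\]
To obtain this, fix a source $S$, let $p_1,p_2$ be the corresponding solutions, and set $v:=p_1-p_2$; then $v$ solves the wave equation with the same principal part as the $\rho_1$-problem, zero initial data, right-hand side $\nabla\cdot((\rho_1-\rho_2)\nabla p_2)$, and boundary term $-(\rho_1-\rho_2)\,\bn\cdot\nabla p_2$. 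A standard energy estimate for this hyperbolic problem controls $v$ in the natural space-time energy norm by $\|\rho_1-\rho_2\|_{W^{1,\infty}}$ times higher-order space-time Sobolev norms of $p_2$, which are in turn bounded in terms of $\|S\|_{\cH^{1/2}}$ and the coefficient bounds using the smoothness of $S$ from (B-ii); a hidden-regularity/trace estimate for the wave equation then transfers this into the $\cH^{3/2}$ bound on $v|_{(0,T)\times\partial\Omega}$. Feeding this into the previous display with $\rho_1=\cN_{\theta}(\kappa)$, $\rho_2=\cN_{\wt\theta}(\wt\kappa)$, and splitting
\[
\cN_{\theta}(\kappa)-\cN_{\wt\theta}(\wt\kappa)=\big(\cN_{\theta}(\kappa)-\cN_{\wt\theta}(\kappa)\big)+\big(\cN_{\wt\theta}(\kappa)-\cN_{\wt\theta}(\wt\kappa)\big)
\]
by the triangle inequality in $W^{1,\infty}(\Omega)$, yields~\eqref{EQ:Bound Wave} with $\fc=CC'$; as in the remark following the diffusion theorem, the smoothness of $\cN_{\wt\theta}$ makes $\|\cN_{\wt\theta}(\wt\kappa)-\cN_{\wt\theta}(\kappa)\|_{W^{1,\infty}(\Omega)}$ small relative to $\|\wt\kappa-\kappa\|_{W^{1,\infty}(\Omega)}$, so the bound is genuinely informative.

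I expect the main obstacle to be the forward Lipschitz estimate in the exact spaces of (B-iii): extracting the half-derivative gain from an $\cH^{1/2}$ source up to an $\cH^{3/2}$ bound on the boundary trace of $v$ requires sharp hidden-regularity estimates for the acoustic wave equation and enough regularity/compatibility of the data $S$ and of $p_2$, and one must check that the constant $C'$ stays uniform over the bounded sets in which the coefficients live, so that $\fc$ depends only on $\underline{\alpha},\overline{\alpha}$, $T$, $\Omega$, and the $W^{1,\infty}$ norms in play. Granting (B-iii), the inverse-problem part of the argument — the freezing of $\rho$ and the reduction to a forward sensitivity bound — is immediate.
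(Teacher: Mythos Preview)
Your argument is correct and reaches the stated bound, but it proceeds by a genuinely different route from the paper. The paper writes the PDE satisfied by $w=\wt p-p$, represents $w$ on $(0,T]\times\partial\Omega$ via the adjoint Green function $G$ of the $(\wt\kappa,\cN_{\wt\theta}(\wt\kappa))$ problem, uses $w|_{(0,T]\times\partial\Omega}=0$ to obtain an integral identity, and then isolates an integral operator $\cG$ (with kernel $\int_0^T G\,\partial_t^2 p\,ds$) acting on $\wt\kappa-\kappa$ and a companion operator $\cG_\Delta$ acting on $\cN_{\wt\theta}(\wt\kappa)-\cN_{\theta}(\kappa)$; invertibility of $\cG$ is asserted by classical theory and boundedness of $\cG^{-1}$ is linked to (B-iii). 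You instead treat (B-iii) directly as a black-box stability estimate for $\kappa\mapsto\Lambda_{\kappa}$ at the frozen density $\rho_\star=\cN_{\wt\theta}(\wt\kappa)$, and pair it with a separate forward Lipschitz bound for $\rho\mapsto\Lambda_{\kappa,\rho}$. Your decomposition is more modular and makes the role of (B-iii) transparent without any Green-function machinery; the paper's route makes the linearized structure explicit but leaves the passage from (B-iii) to boundedness of $\cG^{-1}$ somewhat informal. Both proofs ultimately rest on the same technical point you flag, namely a forward continuity estimate in the $\cH^{1/2}\to\cH^{3/2}$ operator norm, which the paper absorbs into boundedness of $\cG_\Delta$ without further detail; your energy/hidden-regularity sketch for this step is the right outline.
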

\begin{proof}
By the assumptions on the smoothness and boundedness of the coefficients $\kappa$ and $\wt\kappa$ as well as the operators $\cN_\theta$ and $\cN_{\wt\theta}$, the equation~\eqref{EQ:Wave} with coefficients $(\kappa, \cN_\theta(\kappa))$ and $(\wt\kappa, \cN_{\wt\theta}(\wt\kappa))$ are well-posed~\cite{Evans-Book13}. We denote by $p$ and $\wt p$ the corresponding solutions. We define $w:=\wt p-p$. Then $w$ solves
\begin{equation*}
	\begin{array}{rcll}
	\wt\kappa\dfrac{\partial^2 w}{\partial t^2} -\nabla\cdot\big(\cN_{\wt\theta}(\wt\kappa) \nabla w\big) &=&-(\wt\kappa-\kappa)\dfrac{\partial^2 p}{\partial t^2}+ \nabla\cdot(\cN_{\wt\theta}(\wt\kappa)-\cN_{\theta}(\kappa))\nabla p, & \mbox{in}\ (0, T] \times \Omega\\[1ex]
    \bn \cdot \cN_{\wt\theta} \nabla  w & = &0, & \mbox{on}\ (0, T]\times\partial\Omega\\[1ex]
 w(0, \bx)=\dfrac{\partial w}{\partial t}(0, \bx) & = & 0, & \mbox{in}\ \Omega
	\end{array}
\end{equation*}
where we used the assumption in (B-i). Let $G$ be the adjoint Green function of this problem; that is, $G$ is the solution to
\begin{equation*}
	\begin{array}{rcll}
	\wt\kappa\dfrac{\partial^2 G}{\partial t^2} -\nabla\cdot\big(\cN_{\wt\theta}(\wt\kappa) \nabla G\big) &=&0, & \mbox{in}\ (0, T] \times \Omega\\[1ex]
    \bn \cdot \cN_{\wt\theta} \nabla  G & = &\delta(t-s)\delta(\bx-\by), & \mbox{on}\ (0, T]\times\partial\Omega\\[1ex]
 G(T, \bx; s, \by)=\dfrac{\partial G}{\partial t}(T, \bx; s, \by) & = & 0, & \mbox{in}\ \Omega\,.
	\end{array}
\end{equation*}
Then for any $(t, \bx) \in (0, T)\times \partial\Omega$, we have
\[
    w(t, \bx) =\int_0^T\int_{\Omega}G(s, \by; t,\bx)\Big[ (\wt\kappa-\kappa)\dfrac{\partial^2 p}{\partial t^2}- \nabla\cdot(\cN_{\wt\theta}(\wt\kappa)-\cN_{\theta}(\kappa))\nabla p\Big] d\by ds\,.
\]
Using the fact that the reconstructions are from the same data, we have that 
\[
    w(t,\bx)  = 0,\ \ \  \mbox{on}\ (0, T]\times\partial\Omega\,.
\]
Therefore, we have
\[
    \int_{\Omega} (\wt\kappa-\kappa) \int_0^T G(s, \by; t,\bx) \dfrac{\partial^2 p}{\partial t^2} d\bs d\bx =
    \int_{\Omega} \int_0^T G\nabla\cdot(\cN_{\wt\theta}(\wt\kappa)-\cN_{\theta}(\kappa))\nabla p d\by ds\,.
\]
This can be further written into, using the assumption that $\cN_{\wt\theta}(\wt\kappa)=\cN_\theta(\kappa)$ on $\partial\Omega$,
\[
    \int_{\Omega} (\wt\kappa-\kappa) \Big(\int_0^T G(s, \by; t,\bx) \dfrac{\partial^2 p}{\partial t^2} d\bs\Big) d\bx =
    \int_{\Omega} \Big(\cN_{\wt\theta}(\wt\kappa)-\cN_{\theta}(\kappa)\Big)\Big(\int_0^T \nabla G\cdot\nabla p ds\Big) d\by\,.
\]
Under the assumptions we made, by classical theory~\cite{Isakov-Book06}, the integral operator on the left, denoted by $\cG$, with kernel $ \Big(\int_0^T G(s, \by; t,\bx) \dfrac{\partial^2 p}{\partial t^2} d\bs\Big)$ is invertible. We, therefore, write the above equation into the form
\[
    \wt\kappa-\kappa=\cG^{-1} \Big[
    \int_{\Omega} \Big(\cN_{\wt\theta}(\wt\kappa)-\cN_{\theta}(\kappa)\Big)\Big(\int_0^T \nabla G\cdot\nabla p ds\Big) d\by\Big]:=\cG^{-1} \cG_{\Delta} \Big(\cN_{\wt\theta}(\wt\kappa)-\cN_{\theta}(\kappa)\Big)\,.
\]
With the assumption in (B-iii), we have a version of $\cG^{-1}: \cH^{1/2}((0,T]\times\partial\Omega) \mapsto W^{1,\infty}(\Omega)$ that is bounded. Moreover, $\cG_{\Delta}: \cH_1^\infty(\bar \Omega)\mapsto \cH^{1/2}((0, T)\times\partial\Omega)$ is a bounded operator. The result of ~\eqref{EQ:Bound Wave} then follows directly.
\end{proof}
We emphasize again here that the assumption in (B-iii) is necessary to get the result in this theorem. While this assumption is not verified mathematically, it is necessary for the uncertainty characterization here. On the other hand, the assumption is not essential in the sense that we are only interested in the relative change to the solution caused by the difference between the relationships $\cN_\theta$ and $\cN_{\wt\theta}$. This relative change is small as long as $\cN_{\wt\theta}-\cN_\theta$ is small (even if $\kappa$ might not have been reconstructed perfectly).

\section{Concluding remarks}
\label{SEC:Concl}

In this work, we provided a proof-of-concept study on a computational framework for data-driven joint reconstruction problems, or multiple coefficient inverse problems, for partial differential equations. We developed a method that fuses data with the mathematical model involved to learn relations between different unknowns to be reconstructed. Our method provides a learned model that is consistent with the underlying physical model for reconstruction purposes. Moreover, we use the learned model to guide the joint reconstruction instead of using it as a hard constraint. This gives the flexibility for the joint reconstruction algorithm to find solutions outside of the training dataset.

Our main objective for this study is not to replace classical inversion methods with learning methods but rather to use additional historical data to help solve the joint reconstruction problem that can not be stably solved without the necessary relations between the unknowns. Moreover, due to the fact that the data-driven modeling part of the computation can be conducted offline, we are not concerned with the computational cost introduced by the supplementary dataset. The online model-based joint inversion, even though now done with an additional loop, has a computational cost comparable to a standard joint inversion algorithm.

Our study is based on the assumption that there is a relation~\eqref{EQ:Relation Gen} between the $f$ and $g$ data that is available to us. This is indeed the case in many physics-based inverse problems where $f$ and $g$ are physical coefficients that are related. Our study would not make any sense, and the learning process for the data-driven modeling process will not converge to a stable solution if this assumption does not hold. When the assumption indeed holds, and the joint inversion problem is stable with respect to the data, our preliminary sensitivity analysis shows that the reconstruction is stable, in an appropriate sense, with respect to the inaccuracy in the learning result.

There is a critical issue that we left without discussion in this work, that is, should we learn the map from $f$ to $g$, that is, $\cN_\theta$ in our presentation, or the map from $g$ to $f$ (which would be roughly the inverse of $\cN_\theta$). The answer to this depends on how much \emph{a priori} information we have on those maps. It is well-known in deep learning research that a smoothing map, corresponding to the information compression process, is, in general, easier to learn (since the parameterization of the map requires a smaller number of parameters) than its map. Therefore, we should choose to learn the smoother one, whether it is $\cN_\theta$ or its inverse.

\section*{Acknowledgments}

\RED{We would like to thank the anonymous referees for their useful comments that helped us improve the quality of this work. We would like to thank Yan Cheng for his help on the numerical experiment in Section~\ref{SUBSEC:Num Acous}. This work is partially supported by the National Science Foundation through grants DMS-1913309, DMS-1937254, and EAR-2000850.} 

\appendix
\section{Details on computational implementation}
\label{SEC:Details}

We discuss in this appendix some technical details on the computational implementation of the algorithms.

\subsection{Learning with polynomial models}
\label{SUBSEC:Polynomial}

As we have emphasized, our main objective is not to learn the exact relation between the coefficients but only a reasonable approximation between them to improve the joint inversion process. Therefore, we are interested in learning the relation with low-order polynomials that have a small number of degree of freedom (which in turn require a small amount of training data points). 

The polynomial model~\eqref{EQ:Poly} is constructed in a way that the different components of the output are completely independent of each other. In other words, we did not impose any constraint on the coefficient $\theta_{1,\balpha}, \cdots, \theta_{K,\balpha}$ even though some constraints should be enforced. We do, however, need to force the predicted coefficient $g$ to be physically relevant. For instance, for the two joint inversion problems we considered, we impose the constraint that $g$ is positive and is bounded from above and below by some known constants $\underline{\alpha}$ and $\overline{\alpha}$, that is
\begin{equation}
    \underline{\alpha}\le \sum_{k=1}^K\Big( \sum_{|\balpha|\le n}\theta_{k,{\balpha}} P_{\balpha}(\wh\bff)\Big) \varphi_k(\bx) \le \overline{\alpha}\,.
\end{equation}
This is a linear constraint on the components of the polynomial coefficient $\theta$.

\paragraph{Generalization to piecewise smooth case.} While the generalized Fourier parameterization~\eqref{EQ:Fourier Model} is a global method, we can extend it to the case of piecewise smooth coefficients by constructing the basis on subdomains of $\Omega$. The polynomial relation~\eqref{EQ:Poly} is now dependent on the subdomains, that is, 
\begin{equation}\label{EQ:Poly Piecewise}
    \wh \bg_{\zeta}:= \Big(\sum_{|\balpha|\le n}\theta_{1,{\balpha},\zeta} P_{\balpha}(\wh\bff_{\zeta}), \cdots, \sum_{|\balpha|\le n}\theta_{j,{\balpha},\zeta} P_{\balpha}(\wh\bff_{\zeta}), \cdots, \sum_{|\balpha|\le n}\theta_{K, {\balpha},\zeta} P_{\balpha}(\wh\bff_{\zeta})\Big)\,,
\end{equation}
where the subscript $\zeta$ is used to highlight that the corresponding quantity is supported on the $\zeta$-th subdomain. When the polynomial relation is learned directly from the coefficient pair~\eqref{EQ:Data Hist}, the learning process on the subdomains has the same amount of data and can be done in parallel. In our model-consistent learning framework, however, this poses additional challenges as it increases the number of parameters in the representation of the operator unless we impose additional constraints on the relations on different subdomains (for instance, by asking them to be sufficiently close to each other). 

\subsection{Network representation and training}
\label{SUBSEC:Network}

To illustrate the feasibility of our approach for problems where the relation between the coefficients is more complicated than what we have discussed, we implemented a neural network representation to benchmark the polynomial representation. 

In our implementation of the autoencoder architecture, each substructure (that is, the encoder, the decoder, or the predictor) is a standard fully connected network of $L+1$ layers. To be precise, let $n^\ell$ be the width of layer $\ell$ ($0\le \ell\le L$), and $\Theta^\ell({\bx}) : \mathbb{R}^{n^{\ell-1}} \rightarrow \mathbb{R}^{n^\ell}$ the standard linear transform
\[
\Theta^\ell({\bx}) = {\bf W}^{\ell-1} {\bx} + {\bf b}^{\ell-1}\,,
\]
where ${\bf W}^{\ell-1} \in \mathbb{R}^{n^\ell \times n^{\ell-1} }$ and ${\bf b}^{\ell-1}\in\mathbb{R}^{n^\ell}$ are respectively the weight and bias for layer $\ell$. Then, the network output $\by$ corresponding to input $\bx$ is described by the following iterative propagation scheme
\begin{equation}\label{nn_traing}
\begin{array}{ll}
P^{0} &= {\bx}, \\
P^{\ell} &= \phi(\Theta^{\ell}({P^{\ell-1}})), \ \ \ell = 1,2,\cdots, L,\\
\by &=\Theta^{L+1}(P^{L})\,,
\end{array}
\end{equation}
where $\phi$ is the nonlinear activation function and $\phi(\bx)$ for any $d$-vector is understood as the $d$-vector $(\phi(x_1),\cdots, \phi(x_d))$. We denote by $\theta$ the set of parameters $\{\bW^\ell, \bb^\ell\}_{\ell=1}^{L+1}$ for the network.  

The training of the autoencoder network is done by minimizing a loss function $\cL(\theta)$ that combines the loss for the encoder-decoder substructure and another loss term for the encoder-predictor substructure. More precisely, we minimize
\begin{multline}\label{EQ:Loss ED}
\cL(\theta):=\dfrac{1}{2N|H|}\dsum_{h\in H}\dsum_{k=1}^N\|\cA_h(f_k, \wt {\cN_\theta^{ep}}(f_k))-u_{h,k}\|_Y^2\\ +\dfrac{1}{2N}\sum_{k=1}^N \|\cF(g_k)-\cN_\theta^{ep}(\cF(f_k))\|_{\wt X}^2 +\dfrac{1}{2N}\sum_{k=1}^N \|\cF(f_k)-\cN_\theta^{ed}(\cF(f_k))\|_{\wt X}^2\,,
\end{multline}
where the operators $\cN_\theta^{ep}=P_\theta\circ E_\theta$ and $\cN_\theta^{ed}=D_\theta\circ E_\theta$ are respectively the encoder-predictor and encoder-decoder networks, and $\wt {\cN_\theta^{ep}}:=\cF^{-1}\circ \cN_\theta^{ep} \circ \cF$ is the relation between the coefficients $f$ and $g$ in the physical space $X$. The first two terms in the loss functions are for the encoder-predictor substructure, and the third term is for the encoder-decoder substructure. The first term in the loss function is to enforce the model-consistency requirement.

{\small

}

\end{document}